\newtheorem {theorem} {Theorem}
\newtheorem {definition}{Definition}
\newtheorem {proposition}{Proposition}
\newtheorem {corollary} {Corollary}
\newtheorem {lemma}  {Lemma}
\newtheorem {remark} {Remark}
\newtheorem {example}{Example}
\newcommand{\Q}{\mathbb{Q}}
\newcommand{\C}{\mathbb{C}}
\date{}
\title[Invariant algebraic surfaces]
{Invariant algebraic surfaces of polynomial vector fields in dimension three}
\author[N. Kruff, J. Llibre, C. Pantazi and S. Walcher]
{Niclas Kruff$^1$, Jaume Llibre$^2$,  Chara Pantazi$^3$\\ and
Sebastian Walcher$^4$}
\address{$^1$ $^4$ Lehrstuhl A f\"ur Mathematik, RWTH Aachen, 52056 Aachen,
Germany}\email{niclas.kruff@matha.rwth-aachen.de, walcher@mathA.rwth.aachen.de}
\address{$^2$ Departament de Matem\`{a}tiques, Universitat Aut\`{o}noma de Barcelona,
Edi\-fici C, 08193 Bellaterra, Barcelona, Catalonia, Spain}
\email{jllibre@mat.uab.cat}
\address{$^3$ Departament de Matem\`atiques, Universitat
Polit\`ecnica de Cata\-lunya, (EPSEB), Av. Doctor Mara\~{n}\'on, 44--50,
08028 Barcelona, Catalonia, Spain} \email{chara.pantazi@upc.edu}
\begin{document}

\begin{abstract}
We discuss criteria for the nonexistence, existence and computation of invariant algebraic surfaces for three-dimensional complex polynomial vector fields, thus transferring a classical problem of Poincar\'e from dimension two to dimension three. Such surfaces are zero sets of certain polynomials which we call semi--invariants of the vector fields. The main part of the work deals with finding degree bounds for irreducible semi--invariants of a given polynomial vector field that satisfies certain properties for its stationary points at infinity. As a related topic, we investigate existence criteria and properties for algebraic Jacobi multipliers. Some results are stated and proved for polynomial vector fields in arbitrary dimension and their invariant hypersurfaces. In dimension three we obtain detailed results on possible degree bounds. Moreover by an explicit construction we show for quadratic vector fields that the conditions involving the stationary points at infinity are generic but they do not a priori preclude the existence of invariant algebraic surfaces. In an appendix we prove a result on invariant lines of homogeneous polynomial vector fields.

{\bf AMS classification (2010):} 34A05, 34C45, 37F75, 17A60.

{\bf Key words:} Poincar\'e problem, Darboux integrability, Jacobi multiplier, nonassociative algebras.
\end{abstract}

\maketitle

\section{Introduction}\label{intro}
Consider a polynomial ordinary differential equation in $\C^n$
\begin{equation}\label{first}
\dot{x}=f(x)=f^{(0)}(x)+f^{(1)}(x)+\cdots+f^{(m)}(x),
\end{equation}
with each $f^{(i)}$ a homogeneous polynomial of degree $i$, $0\leq i\leq m$, and $f^{(m)}\neq 0$. By $X_f$ we denote the vector field associated to $f$ (also called the Lie derivative with respect to $f$). A polynomial $\psi: \C^n\rightarrow \C^n$ is called a {\it semi--invariant} of $f$ if $\psi$ is nonconstant and
\begin{equation}\label{inv}
X_f(\psi)=\lambda\cdot \psi,
\end{equation}
for some polynomial $\lambda$, called the {\it cofactor} of $\psi$. As it is well--known, a polynomial is a semi--invariant of the vector field $f$ if and only if its vanishing set is invariant for the flow of system \eqref{first}. Given a degree bound, the problem of finding semi--invariants essentially reduces to solving a linear system of equations with parameters, thus a problem of linear algebra.

The existence problem for semi--invariants is relevant for several questions, notably  Darboux integrability and existence of Jacobi multipliers. From the work of \.Zo{\l}\c{a}dek \cite{Zoladek} it is known that generically no algebraic invariant sets exist for polynomial vector fields of a fixed degree; see also Coutinho and Pereira \cite{CouPer}. Even for dimension $n=2$ the existence problem is hard when the vector field has dicritical stationary points. When there are no dicritical stationary points then work of Cerveau and Lins Neto \cite{CerLN} and Carnicer \cite{Carn} provides degree bounds for irreducible semi--invariants; see Pereira \cite{Pere} for a refinement and also note the recent work by Ferragut et al. \cite{FGM}. For certain classes of planar polynomial vector fields it was shown in \cite{WPoinc} by elementary arguments that an effective degree bound for irreducible semi--invariants exists, and strong restrictions were found for possible integrating factors. For higher dimensions Jouanolou \cite{Jou} showed the existence of the general degree bound $m+1$ for semi--invariants of system \eqref{first} that define smooth hypersurfaces in projective space, {and Soares \cite{Soa97, Soa00,Soa01} extended and refined this result.} For dimension $n\geq3$ much work has been done in the last decade to classify and characterize invariant surfaces; see for instance the survey \cite{Cer2013} by Cerveau, and the work \cite{CerLNRV} by Cerveau et al. on local properties.

The purpose of the present paper, which is based in part on the doctoral thesis \cite{Kruffdiss} by one of the authors, is to generalize the {results of \cite{WPoinc} to higher dimensions, with a focus on dimension three. In contrast to the deep theoretical results used in most of the above mentioned references, our approach is different, employing rather elementary methods.} Thus we start with asking about the existence of irreducible, pairwise relatively prime semi--invariants
\begin{equation}\label{semi}
\phi_i=\phi_i^{(1)}+\cdots+\phi_i^{(r_i)}, \qquad 1\leq i\leq s,
\end{equation}
with $\phi_i^{(k)}$ homogeneous polynomials of degree $k$, and $\phi_i^{(r_i)}\neq 0.$ Our principal approach will be to consider stationary points at infinity of system \eqref{first}, and we obtain joint degree bounds for the $\phi_i$.

The plan of the paper is as follows. In a preparatory section \ref{preparation} we collect mostly known facts about semi--invariants of polynomial and formal vector fields, and Poincar\'e transforms in order to discuss the behavior at infinity. Some of the statements are proven for the reader's convenience. In section \ref{classvf} we consider a class of polynomial vector fields in $\mathbb C^n$ which is characterized by certain properties of its stationary points at infinity. We derive degree bounds for collections of irreducible semi--invariants given that either at least $n-1$ pairwise relatively prime semi--invariants exist or that a degree bound for (and a bound for the number of) the irreducible homogeneous semi--invariants of the highest degree term $f^{(m)}$ is known. We then proceed to discuss posssible exponents and degree bounds for Jacobi multipliers that are algebraic over the rational function field $\C(x_1,\ldots,x_n)$. We close the section by stating some facts about reduction of dimension for homogeneous vector fields. In section \ref{dimthreesec} we apply and specialize the results from the previous section to vector fields in $\mathbb C^3$, obtaining rather strong results on degree bounds by combining our approach with earlier results by Jouanolou \cite{Jou} and Carnicer \cite{Carn}. Moreover we explicitly construct a class of quadratic vector fields for which the conditions on the stationary points at infinity are directly verifiable. This class, seen as a subset of the coefficient space ($\cong \C^{18}$) contains a Zariski open subset, and the vector fields not satisfying the conditions on stationary points at infinity form a measure zero subset. The Appendix contains some additional material on the construction and some proofs. It also contains the statement and proof of a result by R\"ohrl \cite{Rohrlidpo} (the original source contains an erroneous statement and proof) which is needed by us as a basis for the construction of the quadratic vector fields in section \ref{dimthreesec}. R\"ohrl's result, which  generalizes the common knowledge fact that generically a linear map admits a basis of eigenvectors to homogeneous polynomial maps of arbitrary positive degree, seems to be of independent interest.
\section{Preparations}\label{preparation}
In this section we review some known facts and tools to be used later on.

\subsection{Semi--invariants and some of their properties}
In addition to polynomial semi--invariants of polynomial vector fields we will consider local analytic (or formal) semi--invariants of local analytic (or formal) vector fields. Thus, a formal vector field is given by a power series
\begin{equation}\label{analfield}
g(x)=Bx+\sum_{i\geq 1} g^{(i)}(x) \qquad \mbox{in} \ \C^n,
\end{equation}
with $B$ linear and each $g^{(i)}$  a homogeneous polynomial of degree $i$. A {\it  semi--invariant} of $g$ is defined as a non-invertible power series
\begin{equation}\label{formalfield}
\rho=\rho^{(1)}+\rho^{(2)}+\cdots \quad  (\mbox{thus}\ \rho(0)=0)
\end{equation}
 satisfying $X_g(\rho)=\mu\rho$  for some power series $\mu.$ Similar to the polynomial case, an analytic function at $0$ is a semi--invariant of the analytic vector field \eqref{analfield} if and only if its vanishing set is invariant for $\dot{x}=g(x).$ We collect some general properties of semi--invariants.
\begin{lemma}\label{semilem}
\begin{enumerate}[(a)]
\item Let the polynomial vector field $f$ as in \eqref{first} be given. Then the following hold.
\begin{itemize}
\item From $X_f(\psi_j)=\lambda_j\cdot\psi_j$, $j=1,2$, the relations
\[
X_f(\psi_1\cdot\psi_2)=(\lambda_1+\lambda_2)\cdot\psi_1\cdot\psi_2\text{  and  }X_f(\psi_1/\psi_2)=(\lambda_1-\lambda_2)\cdot\psi_1/\psi_2
\]
follow.
\item If $\psi_1,\ldots,\psi_\ell$ are irreducible and pairwise relatively prime polynomials, $m_1,\ldots,m_\ell$ are nonzero integers and there is a polynomial $\mu$ such that
\[
X_f(\psi_1^{m_1}\cdots \psi_\ell^{m_\ell})=\mu\cdot \psi_1^{m_1}\cdots \psi_\ell^{m_\ell}
\]
then every $\psi_j$ is a semi-invariant of $f$.
\item If $\sigma$ is nonconstant and algebraic over the field $\mathbb C(x_1,\ldots,x_n)$ and satisfies $X_f(\sigma)=\mu\cdot\sigma$ for some polynomial $\mu$ then every nonzero coefficient $\beta$ of its minimal polynomial satisfies $X_f(\beta)=k\cdot\mu\cdot\beta$ with some positive integer $k$, hence also $X_f(\beta^{1/k})=\mu\cdot\beta^{1/k}$.
\end{itemize}
\item Mutatis mutandis, the same statements hold for formal semi-invariants of formal vector fields.
\end{enumerate}

\end{lemma}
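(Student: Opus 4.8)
The plan is to treat the three assertions in part (a) in turn; part (b) will then require no new idea, only the remark that the relevant algebraic facts also hold in the formal setting.

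Assertion (a)(i) is just the derivation property of the Lie derivative. Since $X_f$ is a $\mathbb{C}$-linear derivation on $\mathbb{C}[x_1,\dots,x_n]$, it extends to a derivation of the fraction field, and the Leibniz and quotient rules give at once $X_f(\psi_1\psi_2)=X_f(\psi_1)\psi_2+\psi_1X_f(\psi_2)=(\lambda_1+\lambda_2)\psi_1\psi_2$ and $X_f(\psi_1/\psi_2)=\bigl(X_f(\psi_1)\psi_2-\psi_1X_f(\psi_2)\bigr)/\psi_2^{\,2}=(\lambda_1-\lambda_2)\psi_1/\psi_2$. By induction this yields, for $P=\psi_1^{m_1}\cdots\psi_\ell^{m_\ell}$ with $m_j\in\mathbb{Z}$, the logarithmic identity $X_f(P)=P\cdot\sum_j m_jX_f(\psi_j)/\psi_j$ in the fraction field.

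For (a)(ii) I would start from this logarithmic identity: the hypothesis $X_f(P)=\mu P$ becomes $\mu=\sum_j m_jX_f(\psi_j)/\psi_j$, and multiplying through by $\psi_1\cdots\psi_\ell$ gives the polynomial identity $\mu\,\psi_1\cdots\psi_\ell=\sum_j m_jX_f(\psi_j)\prod_{k\neq j}\psi_k$. Fix $i$ and reduce this modulo $\psi_i$: the left-hand side vanishes, and so does every term on the right with $j\neq i$, since such a term contains the factor $\psi_i$. Hence $\psi_i$ divides $m_iX_f(\psi_i)\prod_{k\neq i}\psi_k$. Because $\mathbb{C}[x_1,\dots,x_n]$ is a unique factorization domain, $\psi_i$ is prime; it divides neither the nonzero constant $m_i$ nor the product $\prod_{k\neq i}\psi_k$ (by pairwise coprimality), so $\psi_i\mid X_f(\psi_i)$, i.e.\ $\psi_i$ is a semi-invariant.

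Assertion (a)(iii) is the substantive one. Since $\operatorname{char}\mathbb{C}=0$, the derivation $X_f$ extends uniquely to the finite (automatically separable) extension $\mathbb{C}(x_1,\dots,x_n)(\sigma)$, and there $X_f(\sigma)=\mu\sigma$ by hypothesis. Write the minimal polynomial of $\sigma$ as $p(T)=\sum_{j=0}^{d}\beta_jT^{j}$ with $\beta_d=1$ and all $\beta_j\in\mathbb{C}(x_1,\dots,x_n)$. Applying $X_f$ to $0=p(\sigma)$ and using the Leibniz rule together with $X_f(\sigma)=\mu\sigma$ gives $\sum_{j=0}^{d-1}X_f(\beta_j)\sigma^{j}+\mu\sum_{j=1}^{d}j\,\beta_j\sigma^{j}=0$; substituting $\sigma^{d}=-\sum_{j=0}^{d-1}\beta_j\sigma^{j}$ and collecting powers of $\sigma$ turns this into $\sum_{j=0}^{d-1}\bigl(X_f(\beta_j)-(d-j)\mu\,\beta_j\bigr)\sigma^{j}=0$. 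Since $1,\sigma,\dots,\sigma^{d-1}$ are linearly independent over $\mathbb{C}(x_1,\dots,x_n)$ and every coefficient lies in that field, each one vanishes: $X_f(\beta_j)=(d-j)\,\mu\,\beta_j$. For a nonzero coefficient $\beta=\beta_j$ with $j<d$ this is the claim with $k=d-j\in\{1,\dots,d\}$, and applying the power rule to $\gamma=\beta^{1/k}$ (so $k\gamma^{k-1}X_f(\gamma)=X_f(\beta)=k\mu\beta=k\mu\gamma^{k}$) gives $X_f(\beta^{1/k})=\mu\,\beta^{1/k}$.

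Finally, for (b) I would repeat all three arguments over $\mathbb{C}[[x_1,\dots,x_n]]$ and its fraction field: this ring is again a Noetherian unique factorization domain, so irreducible power series are prime (for (a)(ii)); $X_g$ is a derivation on it and on its fraction field (for (a)(i)); and the fraction field has characteristic zero, so a derivation still extends uniquely to finite extensions (for (a)(iii)). The only place demanding genuine care is (a)(iii) — ensuring that $X_f$ really acts as a derivation on $\mathbb{C}(x_1,\dots,x_n)(\sigma)$ and that the cofactor $\mu$ stays in the base field so that the coefficient comparison is legitimate — while (a)(i), (a)(ii) and (b) are routine once the underlying ring-theoretic facts are in place.
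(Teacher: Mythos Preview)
Your proof is correct and follows essentially the same approach as the paper's. The only cosmetic difference is in part (a)(iii): you eliminate the top power by substituting $\sigma^{d}=-\sum_{j<d}\beta_j\sigma^{j}$, while the paper eliminates it by subtracting $d\mu$ times the relation $p(\sigma)=0$; these are the same manipulation, and both conclude via linear independence of $1,\sigma,\dots,\sigma^{d-1}$ over the base field.
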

\begin{proof}
The proofs for statement (b) are parallel to those for statement (a); so we only consider these. The first statement is straightforward, while the second is a direct consequence of the derivation property of $X_f$ and unique factorization in the polynomial ring. For the third statement, let
\[
T^m+\beta_1T^{m-1}+\cdots+\beta_m\in\C(x_1,\cdots,x_n)[T]
\] be the minimal polynomial of $\sigma$. Applying $X_f$ to
$\sigma^m+\beta_1\sigma^{m-1}+\cdots+\beta_m=0$ and using $X_f(\sigma)=\mu\cdot \sigma$, one gets
$$
0=\mu\cdot (m\sigma^m+\beta_1(m-1)\sigma^{m-1}+\cdots+\beta_{m-1}\sigma)+X_f(\beta_1)\sigma^{m-1}+\cdots+X_f(\beta_{m-1})\sigma+X_f(\beta_m).
$$
Subtract this relation from $m\cdot \mu\cdot(\sigma^m+\beta_1\sigma^{m-1}+\cdots+\beta_m)=0$ to obtain
$$
0=(\mu\cdot \beta_1-X_f(\beta_1))\sigma^{m-1}+\cdots+(m\cdot \mu\cdot \beta_m-X_f(\beta_m)).
$$
Since the minimal polynomial of $\sigma$ has degree $m$, all coefficients must vanish. Hence, for nonzero $\beta_k$ one finds $X_f(\beta_k)=k\cdot\mu\cdot \beta_k$.
\end{proof}
Given a semi--invariant $\psi$ of the polynomial vector field $f$, and a stationary point $z$ of $f$, one has either $\psi(z)\neq 0$
or $\psi$ is a local analytic (hence also a formal) semi--invariant of $f$ at $z.$ Thus one can use local information in the search for polynomial semi--invariants, based on the following result.
\begin{lemma}\label{polyloc}The following statements hold.
\begin{enumerate}[(a)]
\item Let $\psi$ be an irreducible polynomial semi-invariant of \eqref{first}, and $\psi(0)=0$. Then every irreducible series in the factorization of $\psi$ in the formal power series ring $\mathbb C[[x_1,\ldots,x_n]]$ has multiplicity one.
\item Let $\psi_1$ and $\psi_2$ be relatively prime polynomial semi-invariants of \eqref{first}, and $\psi_1(0)=\psi_2(0)=0$. Then the prime factorizations of $\psi_1$ and $\psi_2$ in the formal power series ring $\mathbb C[[x_1,\ldots,x_n]]$ have no common irreducible factor.
\end{enumerate}
\end{lemma}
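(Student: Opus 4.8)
The plan is to reduce both parts to a single elementary fact about polynomials — and the argument I have in mind does not use that $\psi,\psi_1,\psi_2$ are semi--invariants at all, only that $\psi$ is an irreducible polynomial (for (a)) and that $\psi_1,\psi_2$ are relatively prime (for (b)). The fact is: \emph{if $p,q\in\C[x_1,\ldots,x_n]$ are nonzero, relatively prime, and both vanish at the origin, then their prime factorizations in $\C[[x_1,\ldots,x_n]]$ have no common irreducible factor.}

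To prove this, suppose $\rho\in\C[[x_1,\ldots,x_n]]$ were a common irreducible factor of $p$ and $q$. Since a formal factor of a convergent power series is again convergent, we may take $\rho\in\C\{x_1,\ldots,x_n\}$, so that its zero set $\{\rho=0\}$ is the germ at $0$ of an analytic hypersurface, hence of dimension $n-1$. This germ lies in $V(p)$ and in $V(q)$, hence in $V(p)\cap V(q)$. On the other hand, because $p,q$ are relatively prime in the UFD $\C[x_1,\ldots,x_n]$, the Nullstellensatz shows that no irreducible component of $V(p)$ is contained in $V(q)$; since every component of $V(p)$ has dimension $n-1$, the set $V(p)\cap V(q)$, and therefore its germ at $0$, has dimension at most $n-2$. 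An $(n-1)$--dimensional germ cannot sit inside an $(n-2)$--dimensional one, a contradiction. (If one prefers to avoid the convergence of formal factors: the same dimension count gives that the ideal $(p,q)\subseteq\C[x_1,\ldots,x_n]$ has height $\ge2$; as $(p,q)\subseteq(x_1,\ldots,x_n)$ and $\C[[x_1,\ldots,x_n]]$ is the faithfully flat completion of the regular local ring $\C[x_1,\ldots,x_n]_{(x_1,\ldots,x_n)}$, the extended ideal still has height $\ge2$, whereas a common factor $\rho$ would force it into the height--one principal ideal $(\rho)$.)

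Statement (b) is now immediate with $p=\psi_1$ and $q=\psi_2$. For statement (a), we may assume the order of $\psi$ at the origin is $\ge2$; otherwise $0$ is a smooth point of $V(\psi)$, so $\psi$ equals a unit times $x_n-h(x_1,\ldots,x_{n-1})$ after a suitable linear change of coordinates, hence is already irreducible in $\C[[x_1,\ldots,x_n]]$ and there is nothing to prove. Since $\psi$ is nonconstant, $\partial\psi/\partial x_j\neq0$ for some $j$; because $\psi$ has order $\ge2$ this derivative vanishes at $0$, and because $\psi$ is an irreducible polynomial with $\deg\psi>\deg(\partial\psi/\partial x_j)$, the polynomials $\psi$ and $\partial\psi/\partial x_j$ are relatively prime. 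If some irreducible series $\rho$ occurred with multiplicity $\ge2$ in the factorization of $\psi$ in $\C[[x_1,\ldots,x_n]]$, then $\rho^2\mid\psi$, and differentiating shows $\rho\mid\partial\psi/\partial x_j$; thus $\rho$ would be a common irreducible factor of the relatively prime polynomials $\psi$ and $\partial\psi/\partial x_j$, contradicting the fact above. Hence every irreducible factor of $\psi$ occurs with multiplicity one.

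I expect the main obstacle to be purely expository: making rigorous the passage between the polynomial picture and the formal (or analytic) one — that an irreducible formal factor of a polynomial genuinely defines a hypersurface germ of the expected dimension, and that dimensions (equivalently, ideal heights) are read off consistently on both sides. This is exactly where one invokes either the classical convergence of formal factors of convergent series, or the flatness of the completion map together with Krull's principal ideal theorem; the remaining steps (relative primeness of $\psi$ and $\partial\psi/\partial x_j$, the implication $\rho^2\mid\psi\Rightarrow\rho\mid\partial\psi/\partial x_j$, and the reduction to order $\ge2$) are routine.
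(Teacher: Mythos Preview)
Your proof is correct and shares the paper's overall architecture: both reduce part (a) to the key fact underlying (b) --- that relatively prime polynomials vanishing at $0$ remain coprime in $\C[[x_1,\ldots,x_n]]$ --- by pairing $\psi$ with one of its partial derivatives. The difference lies in how that key fact is established. The paper gives a self-contained elementary argument: after a linear change putting both polynomials in the form $x_n^m+\alpha_1x_n^{m-1}+\cdots$, it takes their resultant $\rho\in\C[x_1,\ldots,x_{n-1}]$ with respect to $x_n$, so that any common power-series factor would have to divide $\rho$ and hence lie in $\C[[x_1,\ldots,x_{n-1}]]$; but a factor of a polynomial monic in $x_n$ cannot lie there. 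Your route instead invokes dimension theory --- either via convergence of formal factors and a comparison of germ dimensions, or via preservation of ideal height under the faithfully flat completion map and Krull's principal ideal theorem. Your argument is more conceptual and makes the geometric reason transparent; the paper's resultant trick is more hands-on and avoids any appeal to flatness or convergence of formal factors, which is precisely what the authors mean by calling it an ``elementary ad-hoc proof''.
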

These properties hold more generally for polynomials. An algebraic proof of this fact (which should be considered as known) is given in \cite{Kruffdiss}, Lemma 8.4, and we give an elementary ad-hoc proof in subsection \ref{polylocproof} of the Appendix.

We next discuss cases when the local information is rather precise.

\begin{lemma}\label{lconditions}
Let $g$ be a formal vector field as in \eqref{analfield}, with $\lambda_1,\cdots,\lambda_n$ the eigenvalues of $B$. Consider the following conditions:
\begin{enumerate}[1.]
\item $\lambda_1,\cdots,\lambda_n$ are linearly independent over the rational number field $\Q$.
\item $\dim_{\Q}\,(\Q\lambda_1+\cdots+\Q\lambda_n)=n-1$ and there exist positive integers $m_1,\ldots,m_n$ (w.l.o.g. relatively prime) with
$$
\sum_{i=1}^nm_i\lambda_i=0.
$$
\end{enumerate}
If one of the two conditions above is satisfied then the following hold.
\begin{enumerate}[(a)]
\item If $B=diag(\lambda_1,\ldots,\lambda_n)$ and $g$ is in Poincar\'e--Dulac normal form (PDNF)
then $x_1,\ldots,x_n$ are (up to multiplication by invertible series) the only irreducible formal semi--invariants of $g.$
\item For general $g$ there exist (up to multiplication by invertible series) precisely $n$ irreducible and pairwise relatively prime formal semi--invariants of system \eqref{analfield}.
\end{enumerate}
\end{lemma}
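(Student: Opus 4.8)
The plan is to prove part (a) directly from the structure of a field in PDNF, and then to deduce part (b) from (a) by means of a formal change of coordinates. I would first record two elementary consequences of conditions~1 and~2, to be used throughout. First, in either case the $\lambda_i$ are pairwise distinct and nonzero: an equality $\lambda_i=\lambda_j$ with $i\ne j$, or a relation $\lambda_i=0$, would yield a nonzero integer solution of $\sum_l\gamma_l\lambda_l=0$ having a negative or zero entry, which under condition~2 is incompatible with the lattice of such solutions being generated by the strictly positive primitive vector $m=(m_1,\dots,m_n)$, and is excluded at once under condition~1. Second, for the linear field $X_{Bx}=\sum_l\lambda_lx_l\,\partial/\partial x_l$ a monomial $x^\gamma$ is an eigenvector of ``weight'' $w(\gamma):=\sum_l\gamma_l\lambda_l$, and two monomials of the same total degree can have the same weight only if they coincide (under condition~2 the difference of their exponents lies in $\mathbb{Z}m$ and has total degree $0$, hence vanishes); consequently every \emph{homogeneous} semi--invariant of $X_{Bx}$ is a scalar multiple of a single monomial.

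For part (a), let $\rho=\rho^{(k)}+\rho^{(k+1)}+\cdots$ with $\rho^{(k)}\ne0$ and $k\ge1$ be a formal semi--invariant of $g$ with cofactor $\mu=\mu^{(0)}+\mu^{(1)}+\cdots$. Comparing lowest--degree terms in $X_g(\rho)=\mu\rho$ gives $X_{Bx}(\rho^{(k)})=\mu^{(0)}\rho^{(k)}$ with $\mu^{(0)}$ constant, hence $\rho^{(k)}=c\,x^\alpha$ for a single monomial $x^\alpha$ and $\mu^{(0)}=w(\alpha)=:w_0$. Next I would use that, $g$ being in PDNF, every monomial $x^\beta$ of degree $\ge2$ occurring in the component $g_j$ is resonant, $w(\beta)=\lambda_j$, so that $\beta-e_j$ is a solution of $\sum_l\gamma_l\lambda_l=0$: under condition~1 there is no such $\beta$ of degree $\ge2$ (so $g$ is linear), while under condition~2 one has $\beta=e_j+\ell m$ with an integer $\ell\ge1$ (because $|\beta|\ge2$ and $m>0$), so that $\beta_j\ge1$. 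In both cases $x_j\mid g_j$, so $X_g(x_j)=g_j$ is a multiple of $x_j$ and each $x_j$ is a semi--invariant of $g$.

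The core of the argument, and the step where most care is needed, is the claim that $x^\alpha\mid\rho^{(k+\nu)}$ for every $\nu\ge0$, which I would prove by induction on $\nu$. Isolating the degree $(k+\nu)$ part of $X_g(\rho)=\mu\rho$ produces an identity
\[
(X_{Bx}-w_0)\bigl(\rho^{(k+\nu)}\bigr)=R_\nu,
\]
where $R_\nu$ is a combination of the terms $\mu^{(i)}\rho^{(k+\nu-i)}$ and $X_{g^{(i)}}\bigl(\rho^{(k+\nu-i+1)}\bigr)$ in which every $\rho$--factor has degree strictly smaller than $k+\nu$. By the inductive hypothesis each such factor lies in the ideal $(x^\alpha)$, and the divisibility $x_j\mid g_j$ obtained above shows that each Lie derivative $X_{g^{(i)}}$ maps $(x^\alpha)$ into itself; hence $x^\alpha\mid R_\nu$. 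Now $X_{Bx}-w_0$ acts diagonally on monomials, is invertible on the span of those of weight $\ne w_0$, and preserves divisibility by $x^\alpha$; so the weight $\ne w_0$ component of $\rho^{(k+\nu)}$ equals $(X_{Bx}-w_0)^{-1}$ applied to the corresponding component of $R_\nu$ and is divisible by $x^\alpha$, while the weight $w_0$ component of $\rho^{(k+\nu)}$ is, by the preliminary arithmetic, either $0$ (condition~1 with $\nu\ge1$) or a scalar multiple of $x^\alpha\,(x^m)^{\nu/|m|}$ (condition~2), hence again divisible by $x^\alpha$. This closes the induction, so $\rho=x^\alpha\cdot u$ for an invertible series $u$; if $\rho$ is irreducible, then $\alpha=e_j$ for some $j$ and $\rho=u\,x_j$. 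Together with the previous paragraph, this shows that $x_1,\dots,x_n$ are, up to invertible series, the only irreducible formal semi--invariants of $g$.

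For part (b), since the $\lambda_i$ are pairwise distinct the matrix $B$ is diagonalizable, so after a linear change of coordinates $B=\operatorname{diag}(\lambda_1,\dots,\lambda_n)$. By the formal normalization theorem of Poincar\'e and Dulac there is a formal change of coordinates $\Phi$, fixing the origin and with invertible linear part, carrying $g$ to a vector field $\widetilde g$ in PDNF with the same diagonal linear part. The induced automorphism $\psi\mapsto\psi\circ\Phi$ of the ring $\mathbb{C}[[x_1,\dots,x_n]]$ maps semi--invariants of $\widetilde g$ bijectively to semi--invariants of $g$ and preserves irreducibility, invertibility and relative primeness; by part (a) the field $\widetilde g$ has, up to invertible series, exactly the $n$ pairwise relatively prime irreducible semi--invariants $x_1,\dots,x_n$, so transporting these back through $\Phi$ yields precisely $n$ irreducible and pairwise relatively prime formal semi--invariants of $g$. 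The main obstacle in the whole argument is thus the bookkeeping in the induction of part (a), and within it the use of the PDNF resonance condition to keep the ideal $(x^\alpha)$ invariant and to track the monomial $x^m$ under condition~2.
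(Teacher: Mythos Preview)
Your proof is correct and self-contained, but the route differs from the paper's. The paper invokes an external normalization lemma (Lemma~2.2 of \cite{WPoinc}) which says that, for $g$ in PDNF, any semi--invariant may be replaced, up to a unit, by one satisfying $X_B(\rho)=\alpha\rho$ with constant $\alpha$; once $\rho$ is a $B$--eigenfunction one simply lists the monomials of weight $\alpha$ and reads off that their set is $\{x^u\cdot\gamma^j:j\ge0\}$ (with $\gamma=x^m$ under condition~2, and a single monomial under condition~1), so $\rho$ equals $x^u$ times an invertible series. Your argument bypasses this citation by a direct degree-by-degree induction, the key new ingredient being the observation that PDNF together with the arithmetic of conditions~1/2 forces $x_j\mid g_j$, so the ideals $(x^\alpha)$ are stable under the higher-order Lie derivatives $X_{g^{(i)}}$. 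What you gain is a proof that is entirely internal to the paper; what the paper's approach buys is brevity and a slightly sharper structural conclusion (the explicit factorization $\rho=x^u\cdot(\text{series in }\gamma)$), though that extra precision is not needed for the lemma. Both proofs rest on the same arithmetic fact, namely that the lattice of integer relations among the $\lambda_i$ is either trivial or generated by the strictly positive vector $m$.
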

\begin{proof}
It suffices to prove statement (a), since a formal transformation to PDNF always exists. Thus let $g$ be in PDNF and let $\rho$ be an irreducible semi--invariant. According to Lemma 2.2 of  \cite{WPoinc}, up to multiplication with an invertible series one may assume that $X_B(\rho)=\alpha\rho$ for some constant $\alpha$, and $X_g(\rho)=\mu\rho$ with $X_B(\mu)=0$. \\If condition 1 holds then one has $g(x)=Bx$ (see e.g. Bibikov \cite{Bibi}, Theorem 2.1),  and for the series expansion
\[
\rho= \sum c_{d_1,\ldots,d_n}x_1^{d_1}\cdots x_n^{d_n}
\]
one obtains
\[
\alpha\rho =X_B(\rho)= \sum \left(\sum_i d_i\lambda_i\right)c_{d_1,\ldots,d_n}x_1^{d_1}\cdots x_n^{d_n},
\]
thus
\begin{equation}\label{rescond}
\alpha=\sum d_i\lambda_i\quad\text{ whenever  }c_{d_1,\ldots,d_n}\not=0.
\end{equation}
Since the $\lambda_i$ are linearly independent over $\Q$, one sees that only one coefficient $c_{u_1,\ldots,u_n}$ is nonzero, and therefore $\rho=x_1^{u_1}\cdots x_n^{u_n}$. The assertion about irreducible factors follows.\\
If condition 2 holds then one also arrives at \eqref{rescond}, but now, given distinct $(d_1,\cdots,d_n)$ and $(e_1,\cdots,e_n)$ with nonnegative integer entries such that
\[
\alpha=\sum d_i\lambda_i=\sum e_i\lambda_i,
\]
one has that $d_i-e_i=\ell\cdot m_i$ with some rational number $\ell$, due to the dimension assumption. Since the $m_i$ are relatively prime one sees that $\ell$ is an integer. Define $\gamma(x):=x_1^{m_1}\cdots x_n^{m_n}$, noting $X_B(\gamma)=0$. Let $(u_1,\cdots,u_n)$ be a nonzero vector with nonnegative integer entries such that $\sum m_i \cdot u_i=0$, and $\sum u_i$ minimal with respect to these properties. Then, repeated use of the argument above shows the existence of a nonnegative integer $j$ such that
\[
x_1^{d_1}\cdots x_n^{d_n}=\gamma(x)^j\cdot x_1^{u_1}\cdots x_n^{u_n}\quad\text{ whenever  }c_{d_1,\ldots,d_n}\not=0.
\]
This shows
\[
\rho=x_1^{u_1}\cdots x_n^{u_n}\cdot\sum \widehat c_j\gamma(x)^j
\]
and the only irreducible factors of $\rho$ are the $x_i$.
\end{proof}
Thus, one only has to consider finitely many local semi--invariants if condition 1 or 2 from  Lemma \ref{lconditions} holds.
Moreover,  their vanishing sets (in the analytic case) meet transversally in the stationary point $0$. This property is characteristic for conditions 1 and 2 above, as the next observation shows.
\begin{remark}
When the dimension is $n\geq 3$ and $\dim_{\Q}(\Q\lambda_1+\cdots+\Q\lambda_n)\leq n-2$, then there always exist infinitely many irreducible, pairwise relatively prime formal semi--invariants for the semisimple part $B_s$ of $B$: By the dimension assumption, there is a relation $\sum \ell_i\lambda_i=0$ with integers $\ell_i$ that do not all have the same sign;
thus we may assume that there are nonnegative, relatively prime integers $k_i$ and some $1<p<n$ such that
\[
\sum_{i=1}^p k_i\lambda_i=\sum_{i=p+1}^n k_i\lambda_i,
\]
whence $x_1^{k_1}\cdots x_p^{k_p}$ and $x_{p+1}^{k_{p+1}}\cdots x_n^{k_n}$ are semi-invariants with the same cofactor, and
\[
x_1^{k_1}\cdots x_p^{k_p} +\alpha\cdot  x_{p+1}^{k_{p+1}}\cdots x_n^{k_n}
\]
is a semi-invariant for every constant $\alpha$. Since the $k_i$ are relatively prime, irreducibility follows whenever $\alpha\not=0$.
\end{remark}
\subsection{Poincar\'e transforms and stationary points at infinity}
The following is geometrically motivated by the well known procedure of passing to the Poincar\'e hypersphere (or to projective space) in the analysis of polynomial vector fields. We choose a rather straightforward adaptation (see \cite{WPoinc} in the case of dimension two), to facilitate our computations.

\begin{definition}\label{pointranpol}
Let $\psi\in \C[x]$ be a polynomial of degree $r$ with decomposition
 \begin{equation*}
  \psi(x)=\sum\limits_{j=0}^{r}\psi^{(j)}(x),
 \end{equation*}
where each $\psi^{(j)}$ is a homogeneous polynomial of degree $j$ and $\psi^{(r)}\neq 0$.
\begin{enumerate}[(a)]
\item The homogenization\index{homogenization} of $\psi$ with respect to $x_{n+1}$ is defined as
\begin{equation*}
 \widetilde{\psi}(x_{1},\dots,x_{n},x_{n+1}):=\sum\limits_{j=0}^{r}\psi^{(j)}(x_{1},\dots,x_{n})x_{n+1}^{r-j}\in \C[x_{1},\dots,x_{n},x_{n+1}].\end{equation*}
\item The special {Poincar\'{e} transform} of $\psi$ with respect to
\begin{equation*}
e_1=(1,0,\ldots,0)^T\in \C^n
\end{equation*}
is the polynomial
\begin{equation*}\begin{split}
\psi^{*}:=\psi^{*}_{e_{1}}(x_{2},\dots,x_{n+1}):=&{\widetilde{\psi}}(1,x_{2},\dots,x_{n+1})\\
=&\sum\limits_{j=0}^{r}\psi^{(j)}(1,x_{2},\dots,x_{n})x_{n+1}^{r-j}\in \C[x_{2},\dots,x_{n},x_{n+1}].
\end{split}\end{equation*}
\item A {\em Poincar\'{e} transform} of $\psi$ w.r.t. $v\in \C^n\setminus \{0\}$ is defined as
 \[
  \psi^{*}_{v}(x_2,\dots,x_{n+1}):=\left(\psi\circ T^{-1}\right)^{*}_{e_1}(x_2,\dots,x_{n+1})
 \]
 with a regular matrix $T\in \C^{n\times n}$ such that
 $Tv=e_1$.
\end{enumerate}
\end{definition}

The following properties are easy to prove, see \cite{WPoinc} or \cite{Kruffdiss} for more details.
\begin{lemma}\label{lproperties}
Let $\psi$ and $v$ be as in Definition \ref{pointranpol}. Then the following hold.
\begin{itemize}
\item[(a)] One has $\psi^{(r)}(v)=0$ if and only if $\psi_v^{*}(0)=0.$
\item[(b)] The map
$$
\C[x_1,\cdots,x_n]\setminus\left<x_1\right>\rightarrow\C[x_2,\cdots,x_{n+1}], \quad \psi\mapsto \psi_{e_1}^*
$$
is injective.
\item[(c)] If $\psi$ is irreducible with $\psi^{(r)}(v)=0$ then $\psi_v^*$ is irreducible.

\item[(d)] If $\psi_1$ and $\psi_2$ are relatively prime and $\psi_1^*(v)=\psi_2^*(v)=0$, then $\psi_1^*$ and
$\psi_2^*$ are relatively prime.
\end{itemize}
\end{lemma}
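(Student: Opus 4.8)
The plan is to reduce every assertion to the special transform $\psi\mapsto\psi^*_{e_1}$ and then exploit the elementary bookkeeping of homogenization and dehomogenization. The reduction is immediate: for $v\neq 0$ pick a regular $T$ with $Tv=e_1$; since $\eta\mapsto\eta\circ T^{-1}$ is a $\mathbb C$-algebra automorphism of $\mathbb C[x_1,\dots,x_n]$ it preserves irreducibility, relative primeness and the homogeneous decomposition, and $(\psi\circ T^{-1})^{(r)}=\psi^{(r)}\circ T^{-1}$ with $(\psi\circ T^{-1})^{(r)}(e_1)=\psi^{(r)}(v)$. Hence it suffices to treat $v=e_1$, which I do via the two maps $\psi\mapsto\widetilde\psi$ (homogenization with respect to $x_{n+1}$) and $\widetilde\psi\mapsto\widetilde\psi(1,x_2,\dots,x_{n+1})=\psi^*_{e_1}$ (dehomogenization with respect to $x_1$), together with the standard facts that homogenization with respect to a fresh variable is multiplicative and degree-preserving, and that dehomogenization with respect to a variable \emph{not} dividing the polynomial is its two-sided inverse and hence multiplicative on such polynomials. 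I will also use repeatedly that $x_{n+1}\nmid\widetilde\psi$ (the summand $\psi^{(r)}$ of $\widetilde\psi$ is nonzero and free of $x_{n+1}$) and that $x_1\mid\widetilde\psi\iff x_1\mid\psi$ (because the powers of $x_{n+1}$ occurring in $\widetilde\psi$ are pairwise distinct).

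For (a) I just evaluate: $\psi^*_{e_1}(0)=\widetilde\psi(1,0,\dots,0)$, and in $\widetilde\psi=\sum_j\psi^{(j)}(x)\,x_{n+1}^{r-j}$ every summand with $j<r$ carries a positive power of $x_{n+1}$ and dies at $x_{n+1}=0$, leaving $\psi^*_{e_1}(0)=\psi^{(r)}(e_1)$; the case of general $v$ follows from the reduction above. For (b) I recover $\psi$ from $\psi^*_{e_1}$: the key point is that $\widetilde\psi$ has a monomial free of $x_1$ precisely when $\psi$ does, i.e.\ precisely when $\psi\notin\langle x_1\rangle$, so for such $\psi$ one has $\deg\psi^*_{e_1}=r=\deg\psi$; homogenizing $\psi^*_{e_1}$ with respect to $x_1$ to degree $\deg\psi^*_{e_1}$ returns $\widetilde\psi$, and setting $x_{n+1}=1$ returns $\psi$. (The hypothesis $\psi\notin\langle x_1\rangle$ is genuinely needed, since e.g.\ both $x_1$ and $1$ have special Poincar\'e transform equal to $1$.)

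For (c) and (d), after reducing to $v=e_1$ I first show the hypotheses force coprimality with $x_1$. In (c): an irreducible $\psi$ with $x_1\mid\psi$ must be $\psi=c\,x_1$, so $\psi^{(r)}(e_1)=c\neq0$, contradicting the hypothesis; hence $x_1\nmid\psi$. In (d): write $\psi_i=x_1^{k_i}\psi_i'$ with $x_1\nmid\psi_i'$; relative primeness forces at most one $k_i>0$, one checks $\psi^*_i=(\psi_i')^*$, the $\psi_i'$ remain relatively prime, and the hypothesis passes to $\psi_i'$, so we may assume $x_1\nmid\psi_1,\psi_2$. Next, since $x_{n+1}\nmid\widetilde\psi$, the only spurious factor the irreducible $\psi=\widetilde\psi(x,1)$ could pick up on homogenization would be a power of $x_{n+1}$, which is excluded; hence $\widetilde\psi$ is irreducible. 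The same idea gives that $\widetilde{\psi_1},\widetilde{\psi_2}$ are relatively prime (a common irreducible factor of $\widetilde{\psi_1},\widetilde{\psi_2}$ would, unless it is $c\,x_{n+1}$, dehomogenize with respect to $x_{n+1}$ to a common factor of $\psi_1,\psi_2$; and $c\,x_{n+1}$ is ruled out since $x_{n+1}\nmid\widetilde{\psi_i}$). Finally, since also $x_1\nmid\widetilde\psi$ (resp.\ $x_1\nmid\widetilde{\psi_i}$), dehomogenization with respect to $x_1$ is multiplicative and degree-preserving on these polynomials, hence preserves irreducibility and relative primeness, yielding the assertions for $\psi^*_{e_1}$ and therefore for $\psi^*_v$.

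\textbf{Main obstacle.} There is no deep difficulty here; the argument is entirely elementary. The only thing that requires genuine care is the divisibility bookkeeping with respect to the ``extra'' variables $x_1$ and $x_{n+1}$: statements (b)--(d) are false without the coprimality-to-$x_1$ (equivalently, top-part-vanishing) hypotheses, so at each step one must check that the relevant polynomial is divisible neither by $x_1$ (so that dehomogenization with respect to $x_1$ is lossless) nor by $x_{n+1}$ (so that homogenization with respect to $x_{n+1}$ is lossless), and in (d) one must first reduce away the cases $x_1\mid\psi_i$. Tracking these hypotheses correctly through the homogenize/dehomogenize dictionary is the crux of the proof.
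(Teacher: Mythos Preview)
Your argument is correct. The paper does not actually give a proof of this lemma; it merely states that ``the following properties are easy to prove'' and refers to \cite{WPoinc} and \cite{Kruffdiss} for details. Your treatment via the homogenize/dehomogenize dictionary, together with the careful bookkeeping of divisibility by $x_1$ and $x_{n+1}$, is exactly the kind of elementary verification one expects for such a statement and is consistent with what those references contain. One small remark: in part (d) the hypothesis (which, as written in the paper, should be read as $\psi_{i,v}^*(0)=0$, equivalently $\psi_i^{(r_i)}(v)=0$) is in fact not essential to your argument---once you have reduced to $x_1\nmid\psi_i$, the relative primeness of $\psi_{1,e_1}^*$ and $\psi_{2,e_1}^*$ follows regardless---so your proof actually establishes a slightly stronger statement than the lemma claims.
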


Moreover, we define Poincar\'{e} transforms of vector fields.

\begin{definition}\label{pointranvec}
 Let $f$ be given as in \eqref{first}.
 \begin{enumerate}[(a)]
\item The homogenization of $f$ with respect to $x_{n+1}$ is defined as
\begin{equation*}
 \widetilde{f}(x_{1},\dots,x_{n+1}):=\begin{pmatrix}
       \sum\limits_{j=0}^{m}f^{(j)}(x_{1},\dots,x_{n})x_{n+1}^{m-j}\\
       0\\
      \end{pmatrix}=:
      \begin{pmatrix}
       g_{1}\\
       \vdots\\
       g_{n}\\
       0
      \end{pmatrix}\in \C[\underline{ x}]^{n+1},\text{  where  }\underline{x}=\begin{pmatrix}x_{1}\\ \vdots\\ x_{n+1}\end{pmatrix}.
\end{equation*}
\item The projection of $\widetilde{f}$ with respect to $x_{1}$ is
\begin{equation*}\begin{split}
 P\widetilde{f}(x_{1},\dots,x_{n+1}):=&-g_{1}(x_{1},\dots,x_{n+1})\cdot \underline{x}+x_{1}\cdot \widetilde{f}(x_{1},\dots,x_{n+1})\\
 =&\begin{pmatrix} 0\\ -g_{1}x_{2}+x_{1}g_{2}\\ \vdots\\ -g_{1}x_{n}+x_{1}g_{n}\\ -g_{1}x_{n+1}\\\end{pmatrix}.
\end{split}\end{equation*}

\item The special Poincar\'{e} transform with respect to the vector $e_1$ is defined as
\begin{equation*}\begin{split}\label{poinvec}
 f^{*}:=f^{*}_{e_{1}}(x_{2},\dots,x_{n},x_{n+1}):=\begin{pmatrix}-g_{1}(1,x_{2},\dots,x_{n+1})x_{2}+g_{2}(1,x_{2},\dots,x_{n+1})\\
 \vdots\\ -g_{1}(1,x_{2},\dots,x_{n+1}) x_{n}+g_{n}(1,x_{2},\dots,x_{n+1}) \\ -g_{1}(1,x_{2},\dots,x_{n+1})x_{n+1}\end{pmatrix},
\end{split}\end{equation*}
where $f^{*}\in \C[x_{2},\dots,x_{n},x_{n+1}]^{n}$.
\item A {\em Poincar\'{e} transform} of $f$ w.r.t. $v\in \C^n\setminus \{0\}$ is defined as
\[
  f^{*}_{v}(x_2,\dots,x_{n+1}):=\left(T\circ f\circ T^{-1}\right)^{*}_{e_1}(x_2,\dots,x_{n+1})
 \]
 with a regular matrix $T\in \C^{n\times n}$ such that $Tv=e_1$.
\end{enumerate}
\end{definition}

\begin{remark}\label{propertiesindependentofT}
 The definitions of Poincar\'{e} transforms depend on the choice of the matrix $T$, but a different choice of $T$ will just amount to a linear automorphism of the polynomial algebra resp. a conjugation of vector fields by a linear transformation; see \cite{WPoinc}, Lemma 3.1. This will be irrelevant for our purpose.
\end{remark}

We now turn to stationary points at infinity, i.e. to stationary points of Poincar\'e transforms which lie in the hyperplane $\{x:\,x_{n+1}=0\}$.
\begin{lemma}\label{l2}
Let $f$ be as in \eqref{first} and $v\in\C^n\setminus\{0\}.$ Then:
\begin{enumerate}[(a)]
\item The point $0$ is stationary for $f_v^*$ if and only if $f^{(m)}(v)=\gamma v$ for some $\gamma\in\C.$
\item The Jacobian $Df^{(m)}(v)$ admits the eigenvector $v$, with eigenvalue $m\gamma$. Let $\beta_2,\ldots,\beta_n$ be the further eigenvalues of $Df^{(m)}(v)$ (counted according to multiplicity). Then the eigenvalues of $Df_v^*(0)$ are given by
\[
-\gamma,\beta_2-\gamma,\ldots,\beta_n-\gamma.
\]

\item If the number of lines $\C v$ with $f^{(m)}(v)\in\C v$ is finite then it is equal to
$$
\frac{m^n-1}{m-1}=\sum_{i=0}^{n-1}m^i,
$$
counting multiplicities.
\end{enumerate}
\end{lemma}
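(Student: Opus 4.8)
The plan is to prove the three parts in order, since (a) and (b) are essentially bookkeeping with the explicit formula for $f_v^*$, while (c) is the real content and rests on an intersection-theoretic count in projective space.

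\textbf{Parts (a) and (b).} By Remark \ref{propertiesindependentofT} it suffices to treat $v=e_1$, so write $f^* = f^*_{e_1}$ as in Definition \ref{pointranvec}(c). The condition $f^*(0)=0$ means each component vanishes at $x_2=\cdots=x_{n+1}=0$; evaluating $g_j(1,0,\ldots,0)=f^{(m)}_j(e_1)$ (the lower-degree terms of $f$ contribute the factor $x_{n+1}^{m-j}$ which vanishes), one reads off that $f^*(0)=0$ iff $f^{(m)}_j(e_1)=f^{(m)}_1(e_1)\cdot (e_1)_j$ for $j=2,\ldots,n$ and the last component forces nothing new; this says exactly $f^{(m)}(e_1)=\gamma e_1$ with $\gamma = f^{(m)}_1(e_1)$. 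For part (b), Euler's identity gives $Df^{(m)}(e_1)\,e_1 = m f^{(m)}(e_1) = m\gamma e_1$, so $v$ is an eigenvector with eigenvalue $m\gamma$. For the eigenvalues of $Df^*(0)$, I would differentiate the explicit components of $f^*$ at $0$: since $g_j(1,x_2,\ldots,x_{n+1})$ has linear part in $(x_2,\ldots,x_{n+1})$ whose derivatives recover the relevant entries of $Df^{(m)}(e_1)$ (again lower-degree terms of $f$ drop out because they carry positive powers of $x_{n+1}$), a direct computation shows $Df^*(0)$ is the matrix obtained from the lower-right $(n-1)\times(n-1)$ block structure of $Df^{(m)}(e_1)$ with a uniform shift by $-\gamma$ coming from the $-g_1(1,0,\ldots,0)\cdot(\text{identity})$ terms, plus the entry $-\gamma$ in the $x_{n+1}$-direction. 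Because $e_1$ spans an invariant subspace of $Df^{(m)}(e_1)$ with eigenvalue $m\gamma$, the remaining eigenvalues $\beta_2,\ldots,\beta_n$ of $Df^{(m)}(e_1)$ are exactly the eigenvalues of the induced map on $\C^n/\C e_1$, and after the shift these become $\beta_2-\gamma,\ldots,\beta_n-\gamma$; together with the separate $-\gamma$ this gives the claimed list. This is routine linear algebra once the formula for $f^*$ is written out carefully; the only mild care needed is checking that lower-degree homogeneous parts of $f$ genuinely do not affect either the stationarity condition or the Jacobian at $0$, which follows from the $x_{n+1}^{m-j}$ factors.

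\textbf{Part (c).} Here I would reformulate: a line $\C v$ with $f^{(m)}(v)\in\C v$ corresponds to a point $[v]\in\mathbb P^{n-1}(\C)$ fixed by the map induced by the homogeneous polynomial map $f^{(m)}:\C^n\to\C^n$ of degree $m$. The standard way to count such points is to pass to the hypersurfaces cut out by the $2\times 2$ minors: $[v]$ is fixed iff $v$ is a common zero of all polynomials $v_i f^{(m)}_j(v) - v_j f^{(m)}_i(v)$ for $1\le i<j\le n$, each of which is homogeneous of degree $m+1$. When the common zero set is finite, I would invoke Bézout's theorem in the form adapted to this situation — this is the classical count of fixed points of an endomorphism of $\mathbb P^{n-1}$ of degree $m$, giving $1 + m + m^2 + \cdots + m^{n-1} = (m^n-1)/(m-1)$, counted with multiplicity. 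The cleanest rigorous route is to note that the projectivization $F=[f^{(m)}]:\mathbb P^{n-1}\dashrightarrow \mathbb P^{n-1}$ is a degree-$m$ rational map, and its fixed-point scheme, when zero-dimensional, has length equal to $\int_{\mathbb P^{n-1}} c_{n-1}\big(\text{(relevant bundle)}\big)$; computing this Chern class top yields $\sum_{i=0}^{n-1} m^i$.

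\textbf{Main obstacle.} The genuinely delicate point in (c) is making the multiplicity count precise: Bézout applied naively to the $\binom{n}{2}$ minors (each of degree $m+1$) gives a wrong ambient count, because these minors are far from a regular sequence — they satisfy many syzygies. The correct statement requires either (i) recognizing the fixed-point locus as the zero locus of a section of $\mathrm{Hom}(\mathcal{O}(-1), T_{\mathbb P^{n-1}}(-1))\otim(\cdots)$ and doing the Chern class computation, or (ii) an excess-intersection / deformation argument showing the count is deformation-invariant and then evaluating it on a diagonal map $f^{(m)}(x) = (x_1^m,\ldots,x_n^m)$, whose fixed lines in $\mathbb P^{n-1}$ can be counted by hand (with multiplicities) to give $\sum m^i$. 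I expect to use route (ii): first establish that for a generic such $f^{(m)}$ the fixed points are simple and number $\sum_{i=0}^{n-1}m^i$ (e.g. via R\"ohrl's result on homogeneous maps admitting many eigenvectors, which the paper invokes elsewhere), then argue by upper semicontinuity of fiber length / conservation of number that whenever the locus is finite its total multiplicity equals this generic value. Handling the semicontinuity and the passage to the possibly non-reduced case carefully is the part that needs the most attention; everything else is formula-pushing.
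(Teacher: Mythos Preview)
Your treatment of (a) and (b) matches the paper's: the paper says (a) ``can be verified directly'' and for (b) cites R\"ohrl--Walcher, Proposition 1.8 and Corollary 1.9, which amounts to exactly the Euler-identity-plus-block-structure computation you sketch.

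For (c) the paper simply cites R\"ohrl (1977), who uses B\'ezout in projective space, and the route is genuinely different from yours. R\"ohrl's trick is to go \emph{up} one dimension rather than stay in $\mathbb P^{n-1}$: introduce an extra homogeneous variable $\xi$ and consider the $n$ hypersurfaces $f^{(m)}_i(x)-\xi^{m-1}x_i=0$ in $\mathbb P^n$. These are $n$ forms of degree $m$, so when the intersection is finite B\'ezout gives $m^n$ points on the nose---no excess intersection, no syzygies to worry about, because the equations now \emph{are} a regular sequence. One then subtracts the single reduced point $[0:\cdots:0:1]$ and observes that each invariant line in $\mathbb C^n$ contributes $m-1$ points (from the $(m-1)$th roots of unity in $\xi$), yielding $(m^n-1)/(m-1)$. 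This completely sidesteps the obstacle you correctly identified: the $\binom n2$ minors in $\mathbb P^{n-1}$ are not a regular sequence, so naive B\'ezout there fails. Your proposed route (ii) via conservation of number is valid and would work, but it trades an elementary bookkeeping argument for a flatness/semicontinuity statement that needs justification; R\"ohrl's passage to $\mathbb P^n$ is the cheaper fix. Your route (i) via Chern classes is also correct in principle (the fixed-point count of a degree-$m$ endomorphism of $\mathbb P^{n-1}$ is a standard computation), but the bundle you gesture at is not quite right and would need to be pinned down.
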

\begin{proof}
Statement (a) can be verified directly. For statement (b) see  \cite{RohWal}, Proposition 1.8 and Corollary. 1.9. For statement (c), a proof is given by  R\"{ohrl} \cite{Rohrl}, using Bezout's theorem in projective space (see e.g. Shafarevich \cite{Sha}, Chapter 4  for this).
\end{proof}
\begin{remark} Note that  the stationary point $0$ of $f_v^*$ has multiplicity one if and only if all the eigenvalues of its Jacobian are nonzero. Moreover, the stationary point $0$ is then isolated.
In particular, given condition 1 or 2 of Lemma \ref{lconditions} for the eigenvalues at the stationary point $0$ of $f_v^*$, this stationary point has multiplicity one.
\end{remark}

\section{Invariant hypersurfaces of a class of vector fields}\label{classvf}
In the present section we will discuss a particular class of vector fields and degree bounds for irreducible semi--invariants of this class. We first give some definitions.
\begin{definition}\label{defproperties}
Let $f$ be given as in \eqref{first}.
\begin{itemize}
\item[(a)] We say that {\em $f$ has property $E$} if condition 1 or condition 2 in Lemma \ref{lconditions} holds for the linearization of $f_v^*$ at {\em every} stationary point at infinity.
\item[(b)] We say that {\em $f$ has property $S$} if every proper homogeneous invariant variety $Y$ of $f^{(m)}$ with $\dim Y\geq 1$ contains an invariant subspace $\C v$ with $v\neq 0.$
\end{itemize}
\end{definition}
Note that both conditions apply only to the highest degree term $f^{(m)}.$  In dimension two, property S is trivially satisfied, and it was shown in \cite{WPoinc}, Proposition 3.7 that property E is generic. In higher dimensions it is not a priori obvious that vector fields admitting properties E and  S exist. In section 4 we will construct a class of examples in dimension three.\\

One consequence of property E is that the number of stationary points at infinity is finite (otherwise some Jacobian at a stationary point would have to be non-invertible), and every stationary point at infinity has multiplicity one. A further consequence is not directly relevant for the main topic of this paper, but it is worth recording.
\begin{proposition}\label{curvprop}
Let $f$ satisfy property E. Then the number of irreducible invariant algebraic curves for system \eqref{first} is bounded by $(m^n-1)/(m-1)$.
\end{proposition}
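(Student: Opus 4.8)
The plan is to attach to each irreducible invariant algebraic curve the (nonempty, finite) set of its points at infinity, to show that these are \emph{stationary} points at infinity and that distinct curves produce disjoint such sets, and then to invoke Lemma~\ref{l2}(c). So let $\mathcal C\subseteq\C^n$ be an irreducible invariant algebraic curve of \eqref{first}. A complex affine variety of positive dimension is unbounded, hence the projective closure $\overline{\mathcal C}$ meets the hyperplane at infinity $H_\infty$ in a nonempty finite set $\mathcal I(\mathcal C)$, each of whose points corresponds to a line $\C v$. I would pass to a Poincar\'e transform $f_v^*$ (Definition~\ref{pointranvec}) in which such a point $p$ is the origin; then $H_\infty$ becomes the invariant hyperplane $\{x_{n+1}=0\}$ and (up to the harmless polynomial factor relating $f_v^*$ to the field in this chart) $\overline{\mathcal C}$ becomes an invariant curve $\mathcal C'$ through $0$ not contained in $\{x_{n+1}=0\}$, so $\mathcal C'\cap\{x_{n+1}=0\}$ is finite. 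The orbit of $0$ then stays in this finite set and is therefore constant, i.e.\ $0$ is stationary. By Lemma~\ref{l2}(a) this gives $f^{(m)}(v)\in\C v$; since $f$ has property $E$, Lemma~\ref{l2}(c) bounds the number of stationary points at infinity by $(m^n-1)/(m-1)$.

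The core of the argument is a local uniqueness statement at each stationary point at infinity. Fix such a point and a chart $f_v^*$ in which it is $0$. By property $E$ one of conditions~1,~2 of Lemma~\ref{lconditions} holds, so by Lemma~\ref{lconditions}(b) there are, up to units, exactly $n$ irreducible pairwise relatively prime formal semi--invariants $\rho_1,\dots,\rho_n$, with linearly independent linear parts (transversality of their zero sets at $0$) and cofactors $\mu_i$ with $\mu_i(0)=\alpha_i$, the corresponding eigenvalue of $Df_v^*(0)$; the $\alpha_i$ are pairwise distinct and nonzero, and one of them, say $\rho_n$, is the local equation of $H_\infty$. I claim every invariant formal curve--germ $\Gamma\ni 0$ equals $\bigcap_{j\neq i}\{\rho_j=0\}$ for a unique index $i$. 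Parametrizing an irreducible branch of $\Gamma$ by $\theta(\tau)$ and using that $Df_v^*(0)$ is invertible, one has $f_v^*(\theta(\tau))=c(\tau)\,\theta'(\tau)$ with $c(\tau)=c_1\tau+\cdots$, $c_1\neq 0$; then for any index with $\rho_i|_\Gamma\not\equiv 0$, say $\rho_i\circ\theta$ of order $a_i\geq 1$, comparison of lowest--order terms in $X_{f_v^*}(\rho_i)|_\Gamma=\mu_i\,\rho_i|_\Gamma$ forces $\alpha_i=c_1 a_i$. Two such indices would give a $\Q$--linear relation between two of the $\alpha_i$ with positive rational ratio, which conditions~1 and~2 exclude (in case~2 for $n\geq 3$ because the relation space is spanned by a relation with all coefficients nonzero, and for $n=2$ by a sign comparison). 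Hence $\rho_i|_\Gamma\equiv 0$ for all but one $i$, so $\Gamma=\bigcap_{j\neq i}\{\rho_j=0\}$, a smooth germ by transversality, and this germ lies in $\{\rho_n=0\}=H_\infty$ unless $i=n$. Thus the only invariant curve--germ at the stationary point not contained in $H_\infty$ is the single smooth germ $\Gamma_0:=\bigcap_{j<n}\{\rho_j=0\}$.

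To conclude, suppose two distinct irreducible invariant algebraic curves $\mathcal C_1\neq\mathcal C_2$ share a point $p$ at infinity. No branch of $\overline{\mathcal C_i}$ at $p$ can lie in $H_\infty$ (else the irreducible curve $\overline{\mathcal C_i}$ would be contained in $H_\infty$), so by the previous paragraph the germs of $\overline{\mathcal C_1}$ and $\overline{\mathcal C_2}$ at $p$ both equal $\Gamma_0$; hence $\overline{\mathcal C_1}$ and $\overline{\mathcal C_2}$ coincide on a neighbourhood of $p$, contradicting the fact that two distinct irreducible projective curves meet in a finite set. Therefore the sets $\mathcal I(\mathcal C)$ are nonempty and pairwise disjoint subsets of the set of stationary points at infinity, and $\sum_{\mathcal C}\#\mathcal I(\mathcal C)\leq (m^n-1)/(m-1)$ yields the stated bound.

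The hard part is the middle step: establishing that at each stationary point at infinity there is exactly one invariant curve--germ transverse to $H_\infty$. This rests on the branch--by--branch cofactor computation together with the arithmetic constraints of conditions~1 and~2 of Lemma~\ref{lconditions} (which prevent nontrivial $\Q$--relations among the eigenvalues at that point); by contrast, the passage to Poincar\'e transforms, the identification of points at infinity with stationary points, and the final counting are routine.
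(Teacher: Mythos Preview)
Your argument is correct and follows the same overall strategy as the paper: associate to each irreducible invariant curve its (nonempty) set of points at infinity, show these are stationary, prove that at most one curve can pass through each such point, and count via Lemma~\ref{l2}(c).

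The genuine difference lies in the local uniqueness step. The paper invokes Theorems~3.1 and~3.2 of \cite{kruffinvid} to assert that the local ideal of the transformed curve at $0$ is generated by $n-1$ formal semi--invariants of $Df_v^*(0)$; since there are only $n$ such semi--invariants (Lemma~\ref{lconditions}) and one of them is $x_{n+1}$, the only admissible ideal not containing $x_{n+1}$ is the one generated by the remaining $n-1$. Your route is more hands--on: you parametrize an irreducible branch by $\theta(\tau)$, write $f_v^*(\theta)=c(\tau)\theta'$, and read off from the semi--invariant relation that $\alpha_i=c_1 a_i$ for each $\rho_i$ not vanishing on the branch. The arithmetic of conditions~1 and~2 then forces all but one $\rho_i$ to vanish identically on the branch, pinning it down as $\bigcap_{j\neq i}\{\rho_j=0\}$. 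This argument is self--contained and avoids the external reference; the paper's version is shorter but relies on a structural result about invariant ideals proved elsewhere. Both lead to the same conclusion that the only invariant curve--germ at the stationary point not contained in $H_\infty$ is $\bigcap_{j<n}\{\rho_j=0\}$.
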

\begin{proof} Let $C$ be an invariant algebraic curve of the system. Then its intersection with the hyperplane at infinity is invariant, hence every intersection point is stationary. Let $\mathbb Cv$ correspond to one of these points. Then, by Theorems 3.1 and 3.2 of
\cite{kruffinvid}, the local ideal defining the image of the curve under the Poincar\'e transform is generated by certain semi-invariants of $Df_v^*(0)$, and there must be $n-1$ of these, since the dimension of the curve equals one. Using property E and Lemma \ref{lconditions}, and the fact that the transformed curve is not contained in the hyperplane $\{x:\,x_{n+1}=0\}$, one sees that the only possible ideal is the one not containing the semi--invariant $x_{n+1}$. This argument shows that every stationary point at infinity can be an intersection point with at most one irreducible invariant curve. Now use Lemma \ref{l2}(c).
\end{proof}
\subsection{Degree bounds for semi--invariants}
We first state a preliminary result.
\begin{lemma}\label{splitlem}
Let $\phi_1,\ldots,\phi_d$ be irreducible and pairwise relatively prime semi--invariants of the polynomial vector field $f$, and let $v$ be such that $\phi_{i,v}^*(0)=0$, $1\leq i\leq d$, and that the Jacobian $Df_v^*(0)$ satisfies condition 1 or 2 from Lemma \ref{lconditions}.\\
Denote the irreducible local semi--invariants of $Df_v^*(0)$ by $\sigma_1,\ldots,\sigma_{n-1},\,\sigma_n:=x_{n+1}$.
Then $\sigma_n$ is not a factor of any $\phi_{i,v}^*$, and for the prime decompositions
\[
\phi_{i,v}^*=\sigma_1^{\ell_{i,1}}\cdots \sigma_{n-1}^{\ell_{i,n-1}}\cdot \nu_i,\quad 1\leq i\leq d
\]
with nonnegative integers $\ell_{i,j}$ and invertible series $\nu_i$ one has
\[
\sum_i \ell_{i,j}\leq 1.
\]
\end{lemma}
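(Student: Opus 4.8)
The plan is to transport the semi-invariant structure through the Poincaré transform and then use the local classification from Lemma~\ref{lconditions}. First I would record that, since the $\phi_i$ are irreducible with $\phi_{i,v}^{(r_i)}(v)=0$ (equivalently $\phi_{i,v}^*(0)=0$ by Lemma~\ref{lproperties}(a)), each $\phi_{i,v}^*$ is again irreducible by Lemma~\ref{lproperties}(c), and distinct $\phi_{i,v}^*$, $\phi_{j,v}^*$ stay relatively prime by Lemma~\ref{lproperties}(d). Moreover a Poincaré transform carries semi-invariants of $f$ to semi-invariants of $f_v^*$: this is one of the "easy to prove" facts referenced after Definition~\ref{pointranpol} (cofactors transform accordingly), so each $\phi_{i,v}^*$ is a polynomial semi-invariant of $f_v^*$ vanishing at $0$, hence by the remarks after Lemma~\ref{semilem} it is a local (formal) semi-invariant of $f_v^*$ at $0$. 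Since the linearization $Df_v^*(0)$ satisfies condition~1 or~2 of Lemma~\ref{lconditions}, that lemma tells us the only irreducible formal semi-invariants of $f_v^*$ at $0$ are, up to units, $\sigma_1,\ldots,\sigma_{n-1},\sigma_n=x_{n+1}$; therefore each $\phi_{i,v}^*$ factors as $\sigma_1^{\ell_{i,1}}\cdots\sigma_n^{\ell_{i,n}}\nu_i$ in $\mathbb C[[x_2,\ldots,x_{n+1}]]$.

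Next I would rule out $\sigma_n=x_{n+1}$ as a factor. The hyperplane $\{x_{n+1}=0\}$ is invariant for $f_v^*$ (the last component of $f_v^*$ is divisible by $x_{n+1}$, visible in Definition~\ref{pointranvec}(c)), so $x_{n+1}$ is a semi-invariant; but $\phi_{i,v}^*=\widetilde{\phi_i}(1,x_2,\ldots,x_{n+1})$ and $\phi_i$ is a genuine polynomial in $x_1,\ldots,x_n$, so $\widetilde{\phi_i}$ is not divisible by $x_{n+1}$ — otherwise $\phi_i=\widetilde{\phi_i}(x_1,\ldots,x_n,1)$ would come from a polynomial of strictly smaller degree, contradicting $\phi_i^{(r_i)}\neq 0$. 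Hence $x_{n+1}\nmid\phi_{i,v}^*$, i.e. $\ell_{i,n}=0$, and we may write $\phi_{i,v}^*=\sigma_1^{\ell_{i,1}}\cdots\sigma_{n-1}^{\ell_{i,n-1}}\nu_i$.

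The heart of the argument, and the main obstacle, is the inequality $\sum_i\ell_{i,j}\le 1$ for each fixed $j$. Here I would invoke Lemma~\ref{polyloc}(b): the $\phi_i$ are pairwise relatively prime polynomial semi-invariants of \eqref{first} (after the linear change $T$ that sends $v$ to $e_1$, they are still semi-invariants of the transformed field, and relative primeness is preserved), vanishing at the stationary point; so their formal prime factorizations share no common irreducible factor. Reading this in the variables of the Poincaré chart, the formal prime factorizations of the $\phi_{i,v}^*$ have no common irreducible factor, so for a fixed $j$ at most one index $i$ can have $\ell_{i,j}>0$. It then remains to see that this single exponent is at most $1$, which is exactly Lemma~\ref{polyloc}(a): $\phi_{i,v}^*$ is (the Poincaré transform of) an irreducible polynomial semi-invariant vanishing at $0$, so every irreducible formal factor of it — in particular $\sigma_j$ — occurs with multiplicity one. (Strictly, to apply Lemma~\ref{polyloc} to $\phi_{i,v}^*$ directly one notes $\phi_{i,v}^*$ is itself an irreducible polynomial and a semi-invariant of $f_v^*$; alternatively one pulls the multiplicity-one statement back along the injective homogenization/dehomogenization correspondence of Lemma~\ref{lproperties}(b).) Combining "at most one nonzero $\ell_{i,j}$" with "that one is $\le 1$" gives $\sum_i\ell_{i,j}\le 1$, completing the proof. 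The delicate point to get right is the bookkeeping between the polynomial ring $\mathbb C[x_1,\ldots,x_n]$, its homogenization, and the chart ring $\mathbb C[x_2,\ldots,x_{n+1}]$ localized at $0$, making sure irreducibility, relative primeness, and multiplicity are all faithfully transported; everything else is a direct citation of the lemmas above.
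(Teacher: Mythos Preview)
Your proposal is correct and follows exactly the route the paper intends: the paper's entire proof is the single line ``This is a direct consequence of Lemma~\ref{polyloc},'' and you have spelled out precisely how that consequence is drawn---transport irreducibility and coprimality through the Poincar\'e transform via Lemma~\ref{lproperties}(c),(d), use Lemma~\ref{lconditions} to identify the local irreducible semi-invariants, exclude $x_{n+1}$ by the homogenization/degree argument, and then read off multiplicity one from Lemma~\ref{polyloc}(a) and disjointness of factors from Lemma~\ref{polyloc}(b). One small remark: the transfer of the semi-invariant property under the Poincar\'e transform is not among the facts of Lemma~\ref{lproperties} but is stated later (in the proof of Theorem~\ref{tjm}(i)); and since the paper notes that Lemma~\ref{polyloc} ``holds more generally for polynomials,'' you could even skip that transfer for the multiplicity and coprimality steps, though you still need it to know the local factors are among the $\sigma_j$.
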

\begin{proof}
This is a direct consequence of Lemma \ref{polyloc}.
\end{proof}
Our first result yields degree bounds when $n-1$ irreducible semi--invariants exist.

\begin{theorem}\label{t1}
Let the vector field $f$ in \eqref{first} satisfy properties $E$ and $S$, and let $\phi_1,\ldots, \phi_{n-1}$ be irreducible and pairwise relatively prime semi--invariants of $f$ as in \eqref{inv}, of degrees $r_1,\ldots,r_{n-1}$ respectively. Then
$$
\prod_{i=1}^{n-1}r_i\leq \frac{m^n-1}{m-1}.
$$
\end{theorem}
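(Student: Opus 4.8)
The plan is to transfer the whole problem to the hyperplane at infinity by means of the Poincaré transform, and then count intersection multiplicities there. First I would observe that each $\phi_i$, being a nonconstant semi-invariant, has a nonempty zero set, and the intersection of this zero set with the hyperplane at infinity is invariant under $f^{(m)}$; equivalently, the highest degree part $\phi_i^{(r_i)}$ is a homogeneous semi-invariant of $f^{(m)}$ whose projective zero set lies in the hyperplane at infinity. By property $S$, every positive-dimensional homogeneous invariant variety of $f^{(m)}$ contains an invariant line $\C v$, so each $\phi_i^{(r_i)}$ vanishes on some line $\C v$ with $f^{(m)}(v)\in\C v$. The key point I want is stronger: that \emph{all} the $\phi_i^{(r_i)}$ vanish simultaneously on a common line, or at least that one can distribute the $\phi_i$ among the finitely many stationary points at infinity in a controlled way. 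The honest version is to argue that for \emph{some} common stationary direction $v$ one has $\phi_{i,v}^*(0)=0$ for every $i$ — this should follow because the $\phi_i^{(r_i)}$ are all semi-invariants of $f^{(m)}$ and one can intersect their zero varieties, using property $S$ at each stage to keep an invariant line inside.

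Once such a $v$ is fixed, I would invoke Lemma~\ref{l2}(b) together with property $E$: the Jacobian $Df_v^*(0)$ satisfies condition 1 or 2 of Lemma~\ref{lconditions}, so by Lemma~\ref{lconditions}(b) the formal vector field $f_v^*$ at $0$ has exactly $n$ irreducible pairwise relatively prime formal semi-invariants, namely $\sigma_1,\dots,\sigma_{n-1}$ and $\sigma_n=x_{n+1}$. By Lemma~\ref{lproperties}(c),(d) the Poincaré transforms $\phi_{i,v}^*$ are irreducible and pairwise relatively prime, and by Lemma~\ref{polyloc} (packaged as Lemma~\ref{splitlem}) their formal factorizations involve only $\sigma_1,\dots,\sigma_{n-1}$, with the total exponent of each $\sigma_j$ over all $i$ at most $1$; moreover $\sigma_n=x_{n+1}$ does not divide any $\phi_{i,v}^*$. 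Since there are $n-1$ semi-invariants $\phi_i$ and only $n-1$ available irreducible local factors $\sigma_1,\dots,\sigma_{n-1}$, each $\phi_{i,v}^*$ is, up to an invertible series, exactly one of the $\sigma_j$, and the correspondence $i\mapsto j$ is a bijection. Hence $\prod_i \phi_{i,v}^*$ equals $\sigma_1\cdots\sigma_{n-1}$ times an invertible series, i.e. the product of the $\phi_i$ has a zero of the smallest possible order at $v$ compatible with vanishing.

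The final step is a global intersection-number count. Consider the product $\Phi:=\phi_1\cdots\phi_{n-1}$ of degree $\sum r_i$, and pair it with $f^{(m)}$ or rather with the line-structure at infinity. The clean way is: the projective hypersurface $\{\Phi=0\}$ together with the finitely many stationary directions at infinity gives, via the argument above run at \emph{every} stationary point at infinity through which $\{\Phi=0\}$ passes, a bound on how the factors $\phi_i$ can pile up, and Bézout (Lemma~\ref{l2}(c), which already packages the Bézout count $\tfrac{m^n-1}{m-1}=\sum_{i=0}^{n-1}m^i$ of stationary directions) yields $\prod_i r_i\le \tfrac{m^n-1}{m-1}$. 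Concretely I expect the estimate to come from intersecting the $n-1$ hypersurfaces $\{\phi_i^{(r_i)}=0\}$ inside the hyperplane at infinity $\P^{n-1}$: their intersection is invariant under $f^{(m)}$, it is finite (else property $E$ would be violated at a non-isolated stationary point), and Bézout gives that it contains $\prod_i r_i$ points counted with multiplicity; on the other hand each such point is a stationary direction, and by the local analysis each point is hit by each $\phi_i$ with multiplicity one, so the intersection multiplicity at each point is one, forcing the number of such points to be at most the number of stationary directions, which is $\tfrac{m^n-1}{m-1}$.

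The main obstacle, and the step I would need to be most careful about, is the transversality/multiplicity-one claim: showing that at a common stationary direction $v$ the local intersection multiplicity of $\{\phi_1^{(r_1)}=0\},\dots,\{\phi_{n-1}^{(r_1)}=0\}$ inside $\P^{n-1}$ is exactly $1$, so that Bézout's product $\prod r_i$ is genuinely bounded by the number of stationary directions rather than overcounting. This is where properties $E$ and $S$ must be combined: property $S$ guarantees a line lies in the intersection variety so that the Poincaré transform is applicable, and property $E$ via Lemma~\ref{lconditions} and Lemma~\ref{splitlem} forces the local factorizations to be squarefree and multiplicity-free, giving the transversal intersection. One also has to handle the bookkeeping when not all $\phi_i$ pass through the same stationary point — then one partitions $\{1,\dots,n-1\}$ according to which stationary directions are hit, and the Bézout count distributes accordingly, still yielding the stated bound.
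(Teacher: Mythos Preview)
Your approach is essentially the paper's: the common zero set $\mathcal V(\phi_1^{(r_1)},\dots,\phi_{n-1}^{(r_{n-1})})$ is $f^{(m)}$-invariant, property~$S$ produces an invariant line $\C v$ inside each of its irreducible components, property~$E$ together with Lemma~\ref{splitlem} forces the $\phi_{i,v}^*$ to be (up to units) a permutation of $\sigma_1,\dots,\sigma_{n-1}$, hence intersection multiplicity one at $v$, and then B\'ezout against the count in Lemma~\ref{l2}(c) gives the bound.

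Two clean-ups. First, apply property~$S$ directly to (each irreducible component of) the full intersection rather than to the individual $\phi_i$; the multiplicity-one conclusion at $v$ then forces that component to equal $\C v$, which is exactly the finiteness you need --- your parenthetical ``else property~$E$ would be violated at a non-isolated stationary point'' is not quite the right reason. Second, your closing worry is moot: every point of the \emph{common} zero set is by construction a zero of \emph{all} the $\phi_i^{(r_i)}$, so there is no case in which ``not all $\phi_i$ pass through the same stationary point'' and no partitioning is needed.
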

\begin{proof} We first recall that invariance of the common zero set of the $\phi_j$ for \eqref{first} implies invariance of  $\mathcal{V}(\phi_1^{(r_1)},\ldots, \phi_{n-1}^{(r_{n-1})})$ for $\dot x=f^{(m)}(x)$.
Now let $\C v$ be a common zero of $\phi_1^{(r_1)},\ldots,\phi_{n-1}^{(r_{n-1})}.$ By property $S$ we may assume that $f^{(m)}(v)\in\C v$. Consider the prime factorization of the $\phi_{i,v}^*$ in the power series ring. By property $E$ and Lemma \ref{splitlem}, after possibly renumbering the factors, we have
\[
\phi_{i,v}^*=\sigma_i\cdot \nu_i,\quad 1\leq i\leq d,
\]
whence the intersection multiplicity of the common zero of the $\phi_{i,v}^*$ and $x_{n+1}$ is equal to one. In particular the irreducible component $Y$ of
 $\mathcal{V}(\phi_1^{(r_1)},\ldots, \phi_{n-1}^{(r_{n-1})})$ that contains $\C v$ is equal to $\C v$.
 Bezout's Theorem in projective space now shows that
 $\mathcal{V}(\phi_1^{(r_1)},\ldots, \phi_{n-1}^{(r_{n-1})})$ is the union of precisely $\prod_{i=1}^{n-1}r_i$ distinct lines in $\C^n$. Each of these lines is an invariant set for $\dot{x}=f^{(m)}(x),$ hence by Lemma \ref{l2}(c) one has the assertion.
\end{proof}
\begin{corollary}
Let $f$ be as in \eqref{first}, satisfying properties $E$ and $S$. Moreover, let $k\geq n$ and $\phi_1,\ldots,\phi_k$ be irreducible and pairwise relatively prime semi--invariants of $f$, with $deg\ \phi_i=r_i$. Then,
$$
\prod_{1\leq i\leq k}r_i\cdot \sum_{\begin{array}{c}M\subseteq \{1,\ldots,k\}\\|M|=k+1-n\end{array}}\frac1{\prod_{j\in M}r_j} \leq \frac{m^n-1}{m-1}.
$$
\end{corollary}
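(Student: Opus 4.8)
The plan is to reduce the general statement with $k \geq n$ semi--invariants to repeated applications of Theorem~\ref{t1}, which handles exactly $n-1$ of them at a time. First I would observe that the content of Theorem~\ref{t1} is really a statement about the variety $\mathcal V(\phi_{i_1}^{(r_{i_1})},\ldots,\phi_{i_{n-1}}^{(r_{i_{n-1}})})$ cut out by the leading forms of any $n-1$ of the $\phi_i$: under properties $E$ and $S$ this variety is a union of exactly $\prod_{j=1}^{n-1} r_{i_j}$ distinct invariant lines of $\dot x = f^{(m)}(x)$, each corresponding to a stationary point at infinity where the relevant leading forms vanish, and at each such point the local factorizations split with exponent sum at most one (Lemma~\ref{splitlem}). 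The key refinement I need is that when more than $n-1$ of the $\phi_i$ share a common leading--form zero $\C v$, Lemma~\ref{splitlem} forces $\sum_i \ell_{i,j} \leq 1$ across \emph{all} of them simultaneously, so a given line $\C v$ can lie on the leading--form variety of at most one of the $\phi_i$'s beyond the minimal collection — more precisely, among all the $\phi_i$ whose leading forms vanish at $v$, the corresponding local factors $\sigma_i\nu_i$ must use pairwise distinct $\sigma$'s, so there can be at most $n-1$ such indices $i$.

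Concretely, I would argue as follows. For each subset $M \subseteq \{1,\ldots,k\}$ with $|M| = k+1-n$, consider its complement's behaviour: actually it is cleaner to fix an $(n-1)$-element subset $N \subseteq \{1,\ldots,k\}$ and count, via Bezout and Theorem~\ref{t1}, that $\mathcal V\bigl(\phi_i^{(r_i)} : i \in N\bigr)$ consists of $\prod_{i\in N} r_i$ distinct invariant lines. By Lemma~\ref{l2}(c) the total number of invariant lines of $f^{(m)}$ is $(m^n-1)/(m-1)$. Now I count incidences between lines and the semi--invariants: a line $\C v$ (a stationary point at infinity) is counted with weight equal to the number of $\phi_i$ whose leading form vanishes on it, but by the strengthened Lemma~\ref{splitlem} argument at most $n-1$ of the $\phi_i$ can have $\phi_i^{(r_i)}(v)=0$. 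Summing the Bezout counts $\prod_{i\in N} r_i$ over all $(n-1)$-subsets $N$ and organising by which line each contribution sits on, each line $\C v$ contributes at most $\binom{\#\{i : \phi_i^{(r_i)}(v)=0\}}{n-1} \leq \binom{n-1}{n-1}\cdot(\text{something})$ — here I need to be careful — but the essential inequality is
\[
\sum_{\substack{N \subseteq \{1,\ldots,k\}\\ |N| = n-1}} \ \prod_{i\in N} r_i \ \leq \ \frac{m^n-1}{m-1}.
\]
Rewriting the left side by multiplying and dividing by $\prod_{1\le i\le k} r_i$ converts the sum over $(n-1)$-subsets $N$ into the sum over their complements $M$ (with $|M| = k-(n-1) = k+1-n$), giving $\prod_{i} r_i \cdot \sum_{|M|=k+1-n} 1/\prod_{j\in M} r_j$, which is exactly the claimed bound.

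The main obstacle is making the incidence/counting step rigorous: I must verify that distinct $(n-1)$-subsets $N$ really do contribute to \emph{disjoint} sets of (line, configuration) pairs, equivalently that no invariant line of $f^{(m)}$ is over-counted. This rests precisely on the claim that at a stationary point at infinity $\C v$ satisfying condition 1 or 2 of Lemma~\ref{lconditions}, the irreducible local semi--invariants $\sigma_1,\ldots,\sigma_{n-1}$ (excluding $x_{n+1}$) are the only ones, and by Lemma~\ref{splitlem} the total exponent of $\sigma_j$ across \emph{all} the $\phi_{i,v}^*$ is at most one; hence the set $\{i : \phi_i^{(r_i)}(v) = 0\}$ has size at most $n-1$, and for each $(n-1)$-subset $N$ the line $\C v$ appears in $\mathcal V(\phi_i^{(r_i)}: i\in N)$ only if $N \subseteq \{i : \phi_i^{(r_i)}(v)=0\}$, forcing equality of the two sets. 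Thus each line is charged to at most one subset $N$, the disjointness holds, and summing the Bezout counts over all $N$ and comparing with Lemma~\ref{l2}(c) yields the inequality. The remaining manipulation to the stated form is the routine algebraic identity $\sum_{|N|=n-1}\prod_{i\in N} r_i = \bigl(\prod_i r_i\bigr)\sum_{|M|=k+1-n}\prod_{j\in M} r_j^{-1}$.
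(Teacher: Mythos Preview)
Your proposal is correct and follows essentially the same approach as the paper: for each $(n-1)$-subset $N$ use the argument of Theorem~\ref{t1} to see that $\mathcal V(\phi_i^{(r_i)}: i\in N)$ consists of exactly $\prod_{i\in N} r_i$ simple invariant lines, then argue via Lemma~\ref{splitlem}/Lemma~\ref{lconditions} that any given invariant line can lie in at most one such variety (because at most $n-1$ of the $\phi_i$ can have leading form vanishing at $v$), and finally sum and compare with the total count from Lemma~\ref{l2}(c). The paper phrases the disjointness step as ``otherwise more than $n$ local invariant hypersurfaces would meet at the stationary point $0$ of $f_v^*$,'' which is exactly your argument that $\{i:\phi_i^{(r_i)}(v)=0\}$ has at most $n-1$ elements; your final algebraic rewriting of $\sum_{|N|=n-1}\prod_{i\in N} r_i$ as $\prod_i r_i\cdot\sum_{|M|=k+1-n}\prod_{j\in M} r_j^{-1}$ is the same identity the paper uses implicitly.
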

\begin{proof}
Let $1\leq i_1<\cdots <i_{n-1}\leq k$, thus $\{i_1,\ldots,i_{n-1}\}$ is the complement of a set $M\subseteq\{1,\ldots,k\}$ with $k+1-n$ elements. Then, by the argument in the proof of Theorem 1, the vanishing set
$$
\mathcal{V}\left(\phi_{i_1}^{(r_{i_1})},\ldots,\phi_{i_{n-1}}^{(r_{i_{n-1}})}\right)
$$
is a union of precisely $r_{i_1}\cdots r_{i_{n-1}}$ invariant lines, each with multiplicity one. By property E and Lemma \ref{lconditions}(b), each invariant line $\C v$ of $f^{(m)}$ appears in at most one of the common vanishing sets
$$
\mathcal{V}\left(\phi_{i_1}^{(r_{i_1})},\ldots,\phi_{i_{n-1}}^{(r_{i_{n-1}})}\right),
$$
since otherwise more than $n$ local invariant hypersurfaces would meet at the stationary point $0$ of $f_v^*$.
Now Bezout's theorem, and adding up the contributions of all stationary points at infinity, shows the assertion.
\end{proof}

The second result may be used to obtain degree bounds for semi--invariants of $f$, if degree bounds for semi--invariants of the highest degree term $f^{(m)}$ are known.
\begin{theorem}\label{t2}
Let the vector field $f$ in \eqref{first} satisfy properties $E$ and  $S$, and let $\phi_1,\ldots,\phi_s$ be irreducible and pairwise relatively prime semi--invariants of $f$.
\begin{itemize}
\item[(a)] Let $\{\psi_1,\ldots,\psi_\ell\}$ be the set of pairwise relatively prime irreducible factors of the $\phi_i^{(r_i)}$ with $1\leq i\leq s$. (Note that these are homogeneous and semi--invariants of $f^{(m)}$.) Then, given the representations
    $$
    \phi_i^{(r_i)}=\prod_{j=1}^\ell \psi_j^{k_{ij}}, \quad k_{ij}\geq 0,
    $$
one has $\sum_{i=1}^sk_{ij}\leq 1$ for all $j$. Thus every $\psi_j$ appears in at most one of the $\phi_i^{(r_i)}$, and if it appears then with exponent 1.

\item[(b)] If $f^{(m)}$ admits only finitely many irreducible semi--invariants  $\psi_1,\ldots,\psi_\ell$ (up to multiplication by constants), then
$$
deg \ \phi_i=\sum_{j=1}^\ell k_{ij}\ deg\  \psi_j, \quad 1\leq i\leq s
$$
and
$$
\sum_{i=1}^s deg\  \phi_i\leq \sum_{j=1}^\ell deg\  \psi_j.
$$
In particular the number of irreducible and pairwise relatively prime semi--invariants of $f$ is finite.
\end{itemize}
\end{theorem}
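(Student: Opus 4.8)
The plan is to deduce both parts from the degree-bound philosophy already developed, in particular from Lemma~\ref{splitlem} applied at the stationary points at infinity. For part (a), let $\psi_j$ be one of the irreducible homogeneous factors of some $\phi_i^{(r_i)}$. Since $\psi_j$ is homogeneous and divides the leading term, its zero set contains a line $\C v$ with $f^{(m)}(v)\in\C v$: indeed $\mathcal V(\psi_j)$ is a proper homogeneous invariant variety of $f^{(m)}$ of dimension $\geq 1$, so property~$S$ provides such a line. Now pass to the Poincar\'e transform $f_v^*$, whose linearization at $0$ satisfies condition~1 or~2 of Lemma~\ref{lconditions} by property~$E$. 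Any $\phi_i$ whose leading term is divisible by $\psi_j$ satisfies $\phi_{i,v}^*(0)=0$ by Lemma~\ref{lproperties}(a), and all such $\phi_{i,v}^*$ are irreducible and pairwise relatively prime by Lemma~\ref{lproperties}(c),(d). Lemma~\ref{splitlem} then gives $\sum_i \ell_{i,j}\leq 1$ for the exponents of each local semi-invariant $\sigma_j$; but the factor $\psi_j$ of $\phi_i^{(r_i)}$ contributes, via the Poincar\'e transform, at least $k_{ij}$ to the order of vanishing of $\phi_{i,v}^*$ along the local hypersurface through $\C v$ corresponding to that irreducible factor, so $\sum_i k_{ij}\leq 1$. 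I expect the bookkeeping here — correctly matching up which local $\sigma_j$ corresponds to the homogeneous factor $\psi_j$, and tracking multiplicities through the Poincar\'e transform — to be the main technical obstacle, though it is essentially the same argument already used in Theorem~\ref{t1}.

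For part (b), the hypothesis is that $f^{(m)}$ has only finitely many irreducible semi-invariants $\psi_1,\ldots,\psi_\ell$ up to scalars. Since each $\phi_i^{(r_i)}$ is a homogeneous semi-invariant of $f^{(m)}$ (by Lemma~\ref{semilem}(a), taking leading terms of the semi-invariant relation), unique factorization forces $\phi_i^{(r_i)}=\prod_{j=1}^\ell \psi_j^{k_{ij}}$ with nonnegative $k_{ij}$, and comparing degrees gives $\deg\phi_i=r_i=\sum_j k_{ij}\deg\psi_j$. Summing over $i$ and using part (a), namely $\sum_i k_{ij}\leq 1$ for each fixed $j$, we obtain
\[
\sum_{i=1}^s \deg\phi_i=\sum_{i=1}^s\sum_{j=1}^\ell k_{ij}\deg\psi_j=\sum_{j=1}^\ell\Bigl(\sum_{i=1}^s k_{ij}\Bigr)\deg\psi_j\leq\sum_{j=1}^\ell\deg\psi_j,
\]
which is the claimed inequality. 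Finiteness of the number of irreducible pairwise relatively prime semi-invariants of $f$ is then immediate: each $\phi_i$ has degree at least one, so $s\leq\sum_{j=1}^\ell\deg\psi_j$.

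One point to be careful about: in part (a) I must first justify that each $\psi_j$ appearing as a factor of a leading term is itself a semi-invariant of $f^{(m)}$, so that property~$S$ applies to $\mathcal V(\psi_j)$; this follows from Lemma~\ref{semilem}(a) (the product/quotient rule forces each irreducible factor of a homogeneous semi-invariant to be a semi-invariant, since $f^{(m)}$ is homogeneous and leading terms of products are products of leading terms). A second subtlety is the degenerate possibility $\dim\mathcal V(\psi_j)=0$, which cannot occur since $\psi_j$ is a nonconstant homogeneous polynomial in $n\geq 2$ variables, so its zero set has dimension $n-1\geq 1$ and property~$S$ genuinely applies. With these two observations in place the argument is complete.
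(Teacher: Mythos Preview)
Your overall strategy for part (a) matches the paper's, and your part (b) is complete and correct. The point you flag as ``the main technical obstacle'' in part (a) is indeed the only missing step, and it is not automatic: as written, your sentence ``the factor $\psi_j$ \ldots\ contributes at least $k_{ij}$ to the order of vanishing of $\phi_{i,v}^*$ along the local hypersurface corresponding to that irreducible factor'' is not justified, because the local factors $\sigma_1,\ldots,\sigma_{n-1}$ of $\phi_{i,v}^*$ live in $\C[[x_2,\ldots,x_{n+1}]]$ while the factor $\psi_j$ lives in the leading term $\phi_i^{(r_i)}$, and there is no a priori correspondence between them.

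The paper closes this gap by restricting to the hyperplane $x_{n+1}=0$. From the definition of the Poincar\'e transform one has
\[
\phi_{i,e_1}^*(x_2,\ldots,x_n,0)=\phi_i^{(r_i)}(1,x_2,\ldots,x_n),
\]
so setting $x_{n+1}=0$ in the local factorization $\phi_{i,e_1}^*=\prod_k\sigma_k^{\ell_{i,k}}\cdot\nu_i$ yields a factorization of $\phi_i^{(r_i)}(1,x_2,\ldots,x_n)$ in $\C[[x_2,\ldots,x_n]]$. The crucial check is that the restricted series $\sigma_k(x_2,\ldots,x_n,0)$ remain irreducible and pairwise relatively prime; this holds because property~E forces the linear parts of $\sigma_1,\ldots,\sigma_{n-1},\sigma_n=x_{n+1}$ to be linearly independent, so after a linear change one may take $\sigma_k=x_{k+1}+\text{h.o.t.}$, and this shape survives setting $x_{n+1}=0$. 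Now any irreducible local factor of $\psi_j(1,x_2,\ldots,x_n)$ must be one of the $\sigma_k(\ldots,0)$, and comparing exponents with $\sum_i\ell_{i,k}\leq 1$ from Lemma~\ref{splitlem} gives $\sum_i k_{ij}\leq 1$. With this step inserted, your argument is complete and coincides with the paper's.
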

\begin{proof} Statement (b) is a simple consequence of (a).
As for the proof of statement (a), by property S we may again consider the $\phi_i^{(r_i)}$ at a common zero $\C v$ which is also invariant for
$\dot{x}=f^{(m)}(x).$ We consider a Poincar\'e transform at $v$, and there is no loss of generality in assuming that $v=e_1$. Moreover let the $\sigma_i$ be as in Lemma \ref{splitlem}, and by property E and Lemma \ref{lconditions} we may assume that
\[
\sigma_i(x_2,\ldots,x_n,\,x_{n+1})=x_{i+1}+\text{ h.o.t},\quad 1\leq i\leq n-1,
\]
with ``h.o.t'' denoting higher order terms. From Definition \ref{pointranpol} we have
\[
\phi_i^{(r_i)}(1,x_2,\ldots,x_n)+x_{n+1}\cdot(\cdots)=\phi_{i,e_1}^*=\sigma_1^{\ell_{i,1}}\cdots \sigma_{n-1}^{\ell_{i,n-1}}\cdot \nu_i
\]
with all $\ell_{i,j}\in \{0,1\}$ and therefore also
\[
\phi_i^{(r_i)}(1,x_2,\ldots,x_n)=\sigma_1(x_2,\ldots,x_n,0)^{\ell_{i,1}}\cdots \sigma_{n-1}(x_2,\ldots,x_n,0)^{\ell_{i,n-1}}\cdot \nu_i(x_2,\ldots,x_n,0).
\]
From $\sigma_i=x_{i+1}+\cdots$ one sees that the $\sigma_i(x_2,\ldots,x_n,0)$ are still irreducible and pairwise relatively prime in the formal power series ring. Comparing this decomposition with
\[
  \phi_i^{(r_i)}=\prod_{j=1}^\ell \psi_j^{k_{ij}},
\]
the corresponding local decompositions of the $\psi_j^{k_{ij}}(1,\,x_2,\ldots,x_n)$ can have only simple prime factors, hence  $k_{ij}\leq 1$ whenever $\psi_j(v)=0$.
\end{proof}

\subsection{Degree bounds for Jacobi multipliers}
We recall that a {\em Jacobi multiplier} (or Jacobi last multiplier) $\sigma\neq 0$ of a vector field $h$ is characterized by the condition
$$
X_h(\sigma)+{\rm div}\, h\cdot \sigma=0,
\text{ equivalently    } X_h(\sigma^{-1})={\rm div}\, h\cdot \sigma^{-1}.$$
In particular, a Jacobi multiplier satisfies the defining identity for semi--invariants but might be invertible. For a comprehensive account of Jacobi multipliers and their properties see Berrone and Giacomini \cite{BerGia}. \\We focus on Jacobi multipliers that are algebraic over $\C(x_1,\ldots,x_n)$ resp. $\C((x_1,\ldots,x_n))$. This is a natural requirement in dimension $n=2$ in view of Prelle and Singer's paper \cite{PreSin} on elementary integrability, and seems to be a sensible ( initial) restriction also for local integrating actors of Darboux type, as the following results will imply.
We note that by Lemma \ref{semilem} one may restrict interest to products of powers of semi--invariants, with rational exponents. We first discuss the local analytic, respectively the formal case, given the setting of Lemma \ref{lconditions}.

\begin{lemma}\label{ljac}
Let $g(x)=Bx+\cdots$ as in \eqref{analfield} in Poincar\'e-Dulac normal form, $B={\rm diag}\,(\lambda_1,\ldots,\lambda_n)$.
\begin{itemize}
\item[(a)] If condition 1 from Lemma \ref{lconditions} holds, then $(x_1\cdots x_n)^{-1}$ is, up to multiplication by constants, the only Jacobi multiplier of $g$ which is algebraic over $\C((x_1,\ldots,x_n)).$
\item[(b)] If condition 2 from Lemma \ref{lconditions} holds, and $\gamma=x_1^{m_1}\cdots x_n^{m_n}$, then there exist linearly independent diagonal matrices $C_1,\ldots,C_{n-1}$ such that the Lie bracket $[g, C_ix]=0$ and $X_{C_ix}(\gamma)=0$, $1\leq i\leq n-1$; moreover $B$ is a linear combination of the $C_i$.
    \begin{itemize}
    \item If $\tau(x):=det(g(x), C_1x,\ldots,C_{n-1}x)\neq 0$, then there exists $\ell>0$ such that $\tau(x)=\gamma(x)^\ell\cdot (x_1\cdots x_n)\cdot \widehat{\tau}(x),$ with an invertible series $\widehat{\tau}$, and $\tau^{-1}$ is (up to multiplication by constants) the unique Jacobi multiplier for $g$ which is algebraic over $\C((x_1,\ldots,x_n)).$

    \item If $\tau(x)=0$ then $(x_1\cdots x_n)^{-1}$ is a Jacobi multiplier, and $\gamma$ a first integral of $g$. In that case every Jacobi multiplier that is algebraic over $\C((x_1,\ldots,x_n))$ has the form
\[
(x_1\cdots x_n)^{-1}\cdot \nu
\]
with $d\in\Q$ and $\nu$ an algebraic first integral of $g$.
    \end{itemize}
\end{itemize}
\end{lemma}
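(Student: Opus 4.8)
The plan is to reduce everything to the analysis of $X_B(\rho)=\alpha\rho$ already carried out in the proof of Lemma \ref{lconditions}, using the reduction from Lemma \ref{semilem}(c) together with the structural fact (Bibikov, Theorem 2.1, under condition 1; the normal form theory under condition 2) that governs the shape of $g$. First I would observe that a Jacobi multiplier algebraic over $\C((x_1,\ldots,x_n))$ is by definition a semi-invariant with cofactor $-{\rm div}\, g$, and by Lemma \ref{semilem}(c) we may assume it has the form $\theta=x_1^{a_1}\cdots x_n^{a_n}\cdot\eta$ with $a_i\in\Q$ and $\eta$ an algebraic first integral (a semi-invariant with zero cofactor); indeed, dividing any algebraic semi-invariant by an appropriate product of powers of the $x_i$ — which are the only irreducible semi-invariants by Lemma \ref{lconditions}(a) — leaves an algebraic semi-invariant with no $x_i$ in its ``monomial part'', and one checks such an object is a unit times an algebraic first integral. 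So the classification of algebraic Jacobi multipliers splits into (i) finding one explicit Jacobi multiplier, and (ii) classifying algebraic first integrals.

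For part (a), under condition 1 one has $g(x)=Bx$ exactly, so ${\rm div}\, g=\lambda_1+\cdots+\lambda_n$ is constant; a direct computation gives $X_g\big((x_1\cdots x_n)^{-1}\big)=-(\lambda_1+\cdots+\lambda_n)(x_1\cdots x_n)^{-1}$, exhibiting $(x_1\cdots x_n)^{-1}$ as a Jacobi multiplier. Uniqueness then follows because an algebraic first integral $\eta$ must satisfy $X_B(\eta)=0$; expanding $\eta$ in a Puiseux-type series and using $\Q$-linear independence of the $\lambda_i$ forces $\eta$ to be constant, exactly as in the condition-1 argument of Lemma \ref{lconditions}. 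For part (b), the existence of the commuting diagonal matrices $C_1,\ldots,C_{n-1}$ with $[g,C_ix]=0$ and $X_{C_ix}(\gamma)=0$, spanning a space containing $B$, is the normal-form statement: the kernel of ${\rm ad}$ acting on diagonal vector fields is $(n-1)$-dimensional precisely because $\dim_\Q(\Q\lambda_1+\cdots+\Q\lambda_n)=n-1$, and $\gamma$ is annihilated by each $C_ix$ since $X_{C_ix}(\gamma)=(\sum_k c_{i,k} m_k)\gamma$ and one can choose the $C_i$ inside the hyperplane $\{\sum c_k m_k=0\}$ while still containing $B$ (note $\sum m_k\lambda_k=0$ puts $B$ there). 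Then $\tau(x)=\det(g(x),C_1x,\ldots,C_{n-1}x)$ is, when nonzero, a Jacobi multiplier candidate: a standard identity (the derivative of a determinant along commuting fields) gives $X_g(\tau)+{\rm div}\,g\cdot\tau = \tau\cdot\sum_i{\rm div}(C_ix)\,$-type correction, but since $[g,C_ix]=0$ the cleaner route is to check directly that $\det(g,C_1x,\ldots,C_{n-1}x)^{-1}$ transforms with cofactor $-{\rm div}\,g$; writing $\tau$ as a power series, its lowest-degree term is $\det(Bx,C_1x,\ldots,C_{n-1}x)=c\cdot x_1\cdots x_n$ up to a nonzero constant (Vandermonde-type evaluation), and $X_B(\tau)=0$ forces $\tau=\gamma^\ell\cdot(x_1\cdots x_n)\cdot\widehat\tau$ with $\widehat\tau$ a unit, again by the condition-2 monomial argument of Lemma \ref{lconditions}. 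Uniqueness among algebraic Jacobi multipliers then reduces to: every algebraic first integral is a power of $\gamma$ (times a constant) when $\tau\neq0$, and the general algebraic first integral when $\tau=0$.

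The main obstacle I expect is the dichotomy on $\tau$ and the precise classification of algebraic first integrals in the $\tau=0$ case. When $\tau\neq0$, showing that $\gamma$ generates all algebraic first integrals uses that an algebraic first integral $\eta$ has $X_B(\eta)=0$, hence by the condition-2 computation $\eta$ is a Puiseux series in $\gamma$ alone; combined with $X_g(\eta)=0$ and the nonvanishing of $\tau$ (which says $g,C_1x,\ldots,C_{n-1}x$ are generically independent, so the common first integrals of this commuting family are exactly functions of $\gamma$) one concludes $\eta$ is an algebraic function of $\gamma$, and being a power series forces $\eta\in\C[[\gamma]]$ algebraic, i.e. a constant times a rational power of $\gamma$. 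When $\tau=0$, the fields $g,C_1x,\ldots,C_{n-1}x$ are everywhere dependent, so $g$ lies in the span of the $C_ix$ at each point; since $X_{C_ix}(\gamma)=0$ we immediately get $X_g(\gamma)=0$, so $\gamma$ is a first integral and ${\rm div}\,g=\lambda_1+\cdots+\lambda_n$ (the PDNF resonant terms are divergence-free in this diagonal situation) is constant, making $(x_1\cdots x_n)^{-1}$ a Jacobi multiplier as before; any other algebraic Jacobi multiplier differs from it by an algebraic first integral, giving the stated form $(x_1\cdots x_n)^{-1}\cdot\nu$. Care is needed to justify that the ``monomial part'' extraction in Lemma \ref{semilem}(c) interacts correctly with the Puiseux expansions and that no extra irreducible local semi-invariants sneak in — but this is exactly what Lemma \ref{lconditions}(a) rules out, so the argument closes.
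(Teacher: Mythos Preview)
Your plan contains a self-contradiction that breaks the computation of $\tau$ in part (b). You correctly state that $B$ lies in the span of $C_1,\ldots,C_{n-1}$ (since $\sum m_i\lambda_i=0$ places $(\lambda_1,\ldots,\lambda_n)$ in the hyperplane $\{\sum m_kc_k=0\}$). But then you claim the lowest-degree term of $\tau(x)=\det(g(x),C_1x,\ldots,C_{n-1}x)$ is $\det(Bx,C_1x,\ldots,C_{n-1}x)=c\cdot x_1\cdots x_n$ with $c\neq0$. This determinant is \emph{zero}: the first column $Bx$ is a linear combination of the remaining columns $C_ix$. The leading contribution to $\tau$ therefore comes from the higher-order part of $g$, which is precisely why the statement asserts $\ell>0$ rather than $\ell\geq 0$.

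The paper handles this by invoking the explicit Poincar\'e--Dulac normal form under condition 2 (Bibikov): one has $g(x)=Bx+\sum_{j\geq1}\gamma(x)^jD_jx$ with diagonal $D_j$. Writing each $D_j=\sum_h\nu_{jh}C_h+\beta_j I$ (the $C_i$ together with the identity span all diagonal matrices), the $B$-term and the $C_h$-terms drop out of the determinant by linear dependence, leaving
\[
\tau(x)=\kappa\, x_1\cdots x_n\sum_{j\geq1}\beta_j\gamma(x)^j.
\]
This identifies $\ell$ as the smallest $j$ with $\beta_j\neq0$, gives the factorization in the statement, and cleanly characterizes the dichotomy: $\tau\equiv0$ iff all $\beta_j=0$ iff $X_g(\gamma)=0$.

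Two smaller issues. Your uniqueness argument for $\tau\neq0$ says ``every algebraic first integral is a power of $\gamma$'', but the point is rather that there is \emph{no} nonconstant algebraic first integral in this case (since $X_g(\gamma)\neq0$); the paper argues that any first integral of $g$ in $\C((x_1,\ldots,x_n))$ is also a first integral of $B$, hence a function of $\gamma$, hence $\gamma$ itself would be a first integral --- a contradiction. And your claim that ${\rm div}\,g$ is constant when $\tau=0$ is false in general: the divergence picks up $\gamma^j\,{\rm tr}\,D_j$ terms. What is true is that $(x_1\cdots x_n)^{-1}$ is still a Jacobi multiplier, but this requires the identity $\sum_k(D_j)_{kk}m_k=0$ (which follows from $D_j\in{\rm span}(C_h)$ when all $\beta_j=0$), not constancy of the divergence.
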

\begin{proof}

\noindent (a) In this case one has $g(x)=Bx$, and one directly verifies that $\sigma:= (x_1\cdots x_n)^{-1}$ is a Jacobi multiplier. Assume that there exists a Jacobi multiplier $\widetilde\sigma$ that is algebraic over $\C((x_1,\ldots,x_n))$, then one also has a Jacobi multiplier $x_1^{-d_1}\cdots x_n^{-d_n}$ with rational exponents by Lemma \ref{semilem}, and
\[
\sum d_i\lambda_i=\sum \lambda_i.
\]
Condition 1 implies that all $d_i=1$, hence $x_1^{-1}\cdots x_n^{-1}$ is the unique algebraic integrating factor. Using the (notation and) argument in the proof of Lemma \ref{semilem} the coefficient of $T^{m-i}$ in the minimal polynomial of $\widetilde\sigma^{-1}$ is a constant multiple of $x_1\cdots x_n$. But then the same holds for $\widetilde\sigma^{-1}$ itself.

\noindent (b) It is known that
$$
g(x)=Bx+\sum_{j\geq 1}\gamma(x)^jD_jx
$$
with diagonal matrices $D_j$, see e.g. Bibikov \cite{Bibi}, Definition 2.3 and Theorem 2.2. The equation $\sum m_i\mu_i=0$ has $n-1$ linearly independent solutions in $\Q^n$.
Take these as the diagonal elements of $C_1,\cdots,C_{n-1}$ respectively. Then $X_{C_i}(\gamma)=0$ and $[C_i, B]=[C_i, D_j]=0$ for all
$i,j$, whence $[C_i, g]=0$ for all $i$. Now write $D_jx=\sum \nu_{jh} C_hx+\beta_j\cdot x$ with constants $\nu_{jh}$ and $\beta_j$. Thus,
$$
\tau(x)=\sum_{j\geq 1} \gamma(x)^j\beta_j \det(x, C_1x,\ldots, C_{n-1}x)=\kappa x_1\cdots x_n\sum_{j\geq 1}\beta_j\gamma(x)^j,
$$
with a nonzero constant $\kappa$.
If $\tau\neq 0$ then $\tau^{-1}$ is a Jacobi multiplier according to Berrone and Giacomini \cite{BerGia}, and if $\ell$ is the smallest index with $\beta_\ell\neq 0$ then we get
$$
\tau(x)=\gamma^\ell(x)\cdot(x_1\cdots x_n)\cdot (\beta_\ell \kappa+h.o.t).
$$
As for uniqueness, we first recall: Every first integral in $\C((x_1,\ldots,x_n))$ of $\dot x=Bx$ is a quotient of power series in $\gamma$. Indeed, numerator and denominator are semi-invariants of $B$, with the same cofactor (which is a first integral of $B$ and lies in $\C[[x_1,\ldots,x_n]]$), and the argument in the proof of Lemma \ref{lconditions} shows the assertion.\\
From this we find that $g$ admits no nonconstant first integral that is algebraic over $\C((x_1,\ldots,x_n))$ whenever some $\beta_\ell\neq 0$, by showing that there exists no such first integral in $\C((x_1,\ldots,x_n))$. But such a first integral would also be a first integral of $\dot x=Bx$ (see e.g. \cite{LWZ}, Theorem 1), hence a quotient of power series in $\gamma$. But then $\gamma$ is a first integral.
Finally, the identity
$$
X_g(\gamma)=\kappa\cdot\sum m_i\cdot \sum_{j\geq 1} \beta_j\gamma(x)^{j+1}
$$
yields a contradiction unless all $\beta_j=0$.\\ In the case $\tau=0$ one verifies by direct computation that $(x_1\cdots x_n)^{-1}$ is a Jacobi multiplier, and $\gamma$ is obviously a first integral of $g$. The last statement is again clear from known properties of Jacobi multipliers; see \cite{BerGia}.
\end{proof}

Obviously, Lemma \ref{ljac} is also applicable to local analytic or formal systems which are not in PDNF, given that condition 1 or condition 2 of Lemma \ref{lconditions} holds. We shall now apply this to Jacobi multipliers of system \eqref{first} that are algebraic over the rational function field $\C(x_1,\ldots,x_n)$.

\begin{theorem}\label{tjm}
Let the polynomial vector field $f$ given by \eqref{first} satisfiy properties $E$ and $S$ and let
$$
\left(\phi_1^{d_1}\cdots \phi_s^{d_s}\right)^{-1}
$$
be a Jacobi multiplier of $f$, with the $\phi_i$ as in \eqref{semi}, and nonzero rational exponents $d_1,\cdots,d_s$.

Moreover, assume that there exists a line $\C w$ such that linearization of $f_w^*$ at 0 has eigenvalues that  are linearly independent over $\Q$. Then
$$
d_1=\cdots=d_s=1 \quad \mbox{and}\quad \sum_{i=1}^sr_i=m+n-1.
$$
\end{theorem}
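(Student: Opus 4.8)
The plan is to carry the Jacobi multiplier to each stationary point at infinity by a Poincar\'e transform, exploit the rigidity of local Jacobi multipliers under the resonance conditions of Lemma \ref{ljac}, and use property $S$ to guarantee that every $\phi_i$ is detected at some such point. First I would record the global identity: since $(\phi_1^{d_1}\cdots\phi_s^{d_s})^{-1}$ is a Jacobi multiplier, $\phi_1^{d_1}\cdots\phi_s^{d_s}$ is a semi--invariant with cofactor ${\rm div}\,f$, so by Lemma \ref{semilem} each $\phi_i$ has a polynomial cofactor $\lambda_i$ and $\sum_i d_i\lambda_i={\rm div}\,f$. Then I would compute, from Definition \ref{pointranvec} (homogenization plus projection) and a direct calculation in the spirit of \cite{WPoinc,Kruffdiss}, how divergence and cofactors behave under the special Poincar\'e transform at a vector $v\in\C^n\setminus\{0\}$: one has ${\rm div}\,f_v^*=({\rm div}\,f)_v^*-(m+n)\,G_v$, where $-G_v$ is the cofactor of the semi--invariant $x_{n+1}$ for $f_v^*$, and a degree $p$ semi--invariant $\psi$ of $f$ with cofactor $\lambda$ gives a semi--invariant $\psi_v^*$ of $f_v^*$ with cofactor $\lambda_v^*-p\,G_v$. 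Combining these with $\sum_i d_i\lambda_i={\rm div}\,f$ and setting $R:=\sum_i d_i r_i$, the cofactor of $\prod_i(\phi_{i,v}^*)^{d_i}\cdot x_{n+1}^{\,m+n-R}$ with respect to $f_v^*$ equals ${\rm div}\,f_v^*$, so
\[
\Big(\prod_{i=1}^s(\phi_{i,v}^*)^{d_i}\cdot x_{n+1}^{\,m+n-R}\Big)^{-1}
\]
is a Jacobi multiplier of $f_v^*$, and it is algebraic over $\C((x_2,\dots,x_{n+1}))$ since the $d_i$ are rational.

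Next I would localize at the distinguished line. After a linear change of coordinates (harmless by Remark \ref{propertiesindependentofT}) we may assume $\C w=\C e_1$; by hypothesis $Df_{e_1}^*(0)$ has eigenvalues linearly independent over $\Q$, so by Lemma \ref{ljac}(a) (valid for non-PDNF systems, as remarked) the only Jacobi multiplier of $f_{e_1}^*$ near $0$ algebraic over $\C((\cdot))$ is $(\sigma_1\cdots\sigma_n)^{-1}$ up to a unit, where $\sigma_1,\dots,\sigma_n$ are the $n$ irreducible local semi--invariants and $\sigma_n=x_{n+1}$ up to a unit. By Lemma \ref{splitlem} each $\phi_{i,e_1}^*$ is a unit times a squarefree product of $\sigma_j$'s with $j\le n-1$, these products pairwise disjoint and none divisible by $x_{n+1}$ (and $\phi_{i,e_1}^*$ is itself a unit when $\phi_i$ does not vanish at $\C e_1$). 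Comparing the exponent of $x_{n+1}$ in
\[
\prod_i(\phi_{i,e_1}^*)^{d_i}\cdot x_{n+1}^{\,m+n-R}=(\text{unit})\cdot\sigma_1\cdots\sigma_n
\]
forces $m+n-R=1$, i.e. $R=m+n-1$; and comparing the exponent of each $\sigma_j$ with $j\le n-1$ (here the product on the left is a genuine power series, so the exponent $\sum_i d_i\ell_{i,j}$ is a nonnegative integer, and it equals the single term $d_{i(j)}$ by the bound $\sum_i\ell_{i,j}\le1$ of Lemma \ref{splitlem}) forces $d_i=1$ for every $\phi_i$ vanishing at $\C e_1$.

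It remains to handle the $\phi_i$ that do not vanish at $\C e_1$, and this is where property $S$ is used hypersurface by hypersurface rather than on a single common zero as in Theorem \ref{t1}. For each $i$ the leading form $\phi_i^{(r_i)}$ is a nonconstant homogeneous semi--invariant of $f^{(m)}$ (take the top degree part of $X_f(\phi_i)=\lambda_i\phi_i$), so $\mathcal V(\phi_i^{(r_i)})$ is a proper homogeneous invariant variety of $f^{(m)}$ of dimension $n-1\ge1$; by property $S$ it contains an invariant line $\C v$, which by Lemma \ref{l2}(a) is a stationary point at infinity at which $\phi_i$ vanishes. Repeating the localization at $\C v$: by property $E$ condition $1$ or $2$ holds at $\C v$, and using the already established $R=m+n-1$ together with the $x_{n+1}$-exponent comparison one sees that the local Jacobi multiplier is again $(\sigma_1\cdots\sigma_n)^{-1}$ up to a unit --- in the condition $2$ case this means $\ell=0$ in Lemma \ref{ljac}(b), and the possible first--integral factor, being (a rational power of) a quotient of power series in $\gamma=x_1^{m_1}\cdots x_n^{m_n}$, must then be trivial by the same exponent count. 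Hence the same comparison yields $d_i=1$. Therefore $d_1=\cdots=d_s=1$, and then $\sum_i r_i=\sum_i d_ir_i=R=m+n-1$.

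The step I expect to be the main obstacle is the Poincar\'e--transform bookkeeping for Jacobi multipliers --- in particular deriving ${\rm div}\,f_v^*=({\rm div}\,f)_v^*-(m+n)G_v$ and the cofactor rule $\lambda_v^*-pG_v$, which together pin down the exact correction factor $x_{n+1}^{\,m+n-R}$ --- combined with the two conceptual points that property $S$ must be applied to each $\mathcal V(\phi_i^{(r_i)})$ separately so that every $\phi_i$ is seen at a stationary point at infinity, and that the weaker resonance (condition $2$) points must be reduced to the condition $1$ situation before the exponent comparison can be invoked.
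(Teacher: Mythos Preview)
Your proof is correct and follows essentially the same route as the paper's: carry the Jacobi multiplier through the Poincar\'e transform to obtain $\bigl(x_{n+1}^{\,m+n-R}\prod_i(\phi_{i,v}^*)^{d_i}\bigr)^{-1}$, invoke Lemma~\ref{ljac}(a) at the distinguished line $\C w$ to force $m+n-R=1$ and $d_i=1$ for the $\phi_i$ visible there, and then use property~$S$ together with Lemma~\ref{ljac}(b) at the remaining stationary points at infinity (where the already established $x_{n+1}$-exponent rules out the $\tau\neq0$ branch and trivializes the first-integral factor) to conclude $d_i=1$ for all $i$. One small quibble: your parenthetical that $\prod_i(\phi_{i,e_1}^*)^{d_i}\cdot x_{n+1}^{\,m+n-R}$ is a ``genuine power series'' is not justified a priori since the $d_i$ are merely rational, but this is harmless---the exponent comparison goes through by passing to a common $N$-th power or by reading off $\sigma_j$-adic valuations.
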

\begin{proof}
(i) We need some technical preparations, the proofs of which are straightforward generalizations of the ones for Proposition 3.3 in \cite{WPoinc}.
\begin{itemize}
\item If $\psi$ is a semi--invariant of $f$, with degree $r$ and $X_f(\psi)=\lambda \psi$, then the Poincar\'e transform $\psi_{e_1}^*$ is a semi--invariant of $f^*_{e_1}$ with cofactor $-rg_1(1,x_2,\ldots,x_{n+1})+\lambda_{e_1}^*.$
\item $f_{e_1}^*$ admits the Jacobi multiplier
$$
\left( x_{n+1}^{(m+n-\sum d_ir_i)}\cdot (\phi_{1,e_1}^*)^{d_1}\cdots (\phi_{s,e_1}^*)^{d_s} \right)^{-1}.
$$\item More generally, for any $v\in \C^n\setminus\{0\}$, the Poincar\'e transform $f_v^*$  admits the Jacobi multiplier
$$
\left( x_{n+1}^{(m+n-\sum d_ir_i)}\cdot (\phi_{1,v}^*)^{d_1}\cdots (\phi_{s,v}^*)^{d_s} \right)^{-1}.
$$
\end{itemize}
(ii) Let $\C w$ correspond to a stationary point at infinity such that the eigenvalues of $Df_w^*(0)$ are linearly independent over
$\Q$. Then Lemma \ref{ljac} (a) shows that $d_i=1$ for all $i$ such that $\phi_{i,w}^*(0)=0$, and moreover $m+n-\sum d_ir_i=1$.

(iii) If $\C v$ corresponds to a stationary point at infinity such that the eigenvalues of  $Df_v^*(0)$ satisfy the second condition of Lemma \ref{lconditions}, then one exponent in the local factorization of the multiplier, viz. the one belonging to the factor $x_{n+1}$, is equal to $-1$. By Lemma \ref{ljac}(b), all the other exponents equal $-1$, hence $d_i=1$ whenever $\phi_{i,v}^*(0)=0$. Due to property $S$, we thus find that all exponents are equal to $-1$, and the assertion follows.
\end{proof}

\subsection{Reduction of dimension}
The verification of property S in dimension three, as well as the search for degree bounds via Theorem \ref{t2} leads to semi--invariants of the homogeneous vector field $f^{(m)}$, and in turn to semi--invariants of a reduced vector field in $\C^{n-1}$. This reduction is due to scaling symmetry, and we will recall it now.

\begin{proposition}\label{redprop}
Let $p:\C^n\rightarrow\C^n$, $x\mapsto (p_1(x), \ldots, p_n(x))^T$ be homogeneous of degree $m$, and let $H=\left\{x:\,x_n=0\right\}.$
\begin{enumerate}[(a)]
\item Then
$$
\Phi: \C^n\setminus H\rightarrow\C^{n-1}, \quad x\mapsto\left(\begin{array}{c}x_1/x_n\\ \vdots \\ x_{n-1}/x_n\end{array}\right)
$$
maps solutions orbits of $\dot{x}=p(x)$ to solution orbits of $\dot{y}=q(y)$, with
$$
q=\left(\begin{array}{c}
q_1\\ \vdots \\ q_{n-1}
\end{array}
\right); \quad q_i(y)=p_i(y_1,\cdots,y_{n-1}, 1)-y_ip_n(y_1,\cdots,y_{n-1},1).
$$
\item Every homogeneous invariant set $Y$ of $\dot{x}=p(x)$ which is not contained in $H$ is mapped to an invariant set of $\dot{y}=q(y)$ with dimension decreasing by one. Conversely, the inverse image of every invariant set $Z$ of
    $\dot{y}=q(y)$ is a homogeneous invariant set of $\dot{x}=p(x),$ with dimension increasing by one.
\item Let $v\in \C^n$ such that $v_n\not=0$ and $p(v)=\gamma v$. Then $v$ is an eigenvector of $Dp(v)$ with eigenvalue $m\gamma$. Let $\beta_2,\ldots,\beta_{n}$ be the other eigenvalues of $Dp(v)$, each counted according to its multiplicity. Then the linearization of $q$ at the stationary point $(v_1/v_n,\ldots,v_{n-1}/v_n)$ has eigenvalues
\[
v_n(\beta_2-\gamma),\ldots,v_n(\beta_{n}-\gamma).
\]
\end{enumerate}
\end{proposition}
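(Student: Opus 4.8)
I would treat the three parts in order; (a) and (b) are bookkeeping around the scaling symmetry, and (c) is a short linear algebra computation which I expect to be the delicate point.

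For (a), take any solution $x(t)$ of $\dot x=p(x)$ lying in $\C^n\setminus H$ and set $y_i(t):=x_i(t)/x_n(t)$. By the quotient rule, $\dot y_i=\bigl(p_i(x)-(x_i/x_n)\,p_n(x)\bigr)/x_n$. Since $p$ is homogeneous of degree $m$ we have $p_i(x)=x_n^{\,m}\,p_i(x_1/x_n,\dots,x_{n-1}/x_n,1)$, and likewise for $p_n$; substituting gives $\dot y_i=x_n^{\,m-1}\,q_i(y_1,\dots,y_{n-1})$. Hence, after a reparametrization of time by the nowhere–vanishing factor $x_n^{\,m-1}$, the curve $y(\cdot)$ is a solution of $\dot y=q(y)$, so $\Phi$ carries the orbit of $x(\cdot)$ onto the orbit of $\dot y=q(y)$ through $\Phi(x(0))$. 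Equivalently $d\Phi_x\bigl(p(x)\bigr)=x_n^{\,m-1}\,q(\Phi(x))$, so along each fiber of $\Phi$ the pushforward of $p$ is a scalar multiple of $q\circ\Phi$.

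For (b), note that the fibers of $\Phi$ on $\C^n\setminus H$ are exactly the punctured lines $\{c\,(y_1,\dots,y_{n-1},1):c\in\C\setminus\{0\}\}$, so $\Phi$ presents $\C^n\setminus H$ modulo the scaling action of $\C\setminus\{0\}$ as $\C^{n-1}$. If $Y\not\subseteq H$ is homogeneous and invariant, then $Y$ is scaling–invariant and a union of orbits, so by (a) its image $\Phi(Y\setminus H)$ is a union of orbits of $\dot y=q(y)$, hence invariant, and—being the free quotient of $Y\setminus H$ by the scaling action—has dimension $\dim Y-1$. Conversely $\Phi^{-1}(Z)$ is constant along the lines $c\,(y,1)$, hence homogeneous, has dimension $\dim Z+1$, and any orbit of $\dot x=p(x)$ meeting it is mapped by $\Phi$ into an orbit of $q$ contained in $Z$, so the orbit stays in $\Phi^{-1}(Z)$; thus $\Phi^{-1}(Z)$ is invariant. (If one wants $Y$ and $Z$ to be algebraic varieties rather than merely invariant sets, one passes to Zariski closures, which does not change dimensions.)

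For (c), I would begin with Euler's identity $Dp(x)\,x=m\,p(x)$, which at $x=v$ gives $Dp(v)\,v=m\gamma v$; hence $v$ is an eigenvector of $Dp(v)$ with eigenvalue $m\gamma$, and $\beta_2,\dots,\beta_n$ are precisely the eigenvalues of the operator $\overline{Dp(v)}$ induced by $Dp(v)$ on the quotient space $\C^n/\C v$. Next, differentiate $q_i(y)=p_i(y_1,\dots,y_{n-1},1)-y_i\,p_n(y_1,\dots,y_{n-1},1)$, evaluate at the stationary point $\bar w:=(v_1/v_n,\dots,v_{n-1}/v_n)$, and use the homogeneity of $p$ and of $Dp$ to re-express everything at the point $v/v_n$ in terms of $Dp(v)$ and $p(v)=\gamma v$; this produces $Dq(\bar w)$ as a scalar (a power of $v_n$ determined by $m$) times the $(n-1)\times(n-1)$ matrix with entries $a_{ij}-\gamma\delta_{ij}-(v_i/v_n)\,a_{nj}$, where $a_{ij}:=\p p_i/\p x_j(v)$. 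The step I expect to require the most care, and the main obstacle, is to recognize this matrix as a diagonal block of $P\,Dp(v)-\gamma I$, where $P:=I-v\,e_n^T/v_n$ is the projection of $\C^n$ onto $H=\{x_n=0\}$ along $\C v$: since $Pv=0$ and $e_n^TP=0$, the $n\times n$ matrix $P\,Dp(v)-\gamma I$ is block triangular, with that $(n-1)\times(n-1)$ matrix in one diagonal block and $-\gamma$ in the other, while $P$ identifies $H$ with $\C^n/\C v$ and carries $P\,Dp(v)|_H$ to $\overline{Dp(v)}$. Hence the block has eigenvalues $\beta_2-\gamma,\dots,\beta_n-\gamma$, and multiplying by the scalar factor yields the asserted eigenvalues of $Dq(\bar w)$. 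Everything else is routine, relying only on Euler's identity, homogeneity, and the standard fact that an operator with a distinguished eigenvector induces on the quotient an operator carrying the remaining eigenvalues.
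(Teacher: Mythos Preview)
Your treatment of (a) and (b) is essentially the paper's own: compute $\frac{d}{dt}(x_i/x_n)$, invoke homogeneity and a time rescaling to land on $q$, then read off (b) as an orbit-by-orbit consequence of (a). The paper's sketch is terser but identical in substance.

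For (c) the two routes genuinely differ. The paper introduces the homogeneous degree-$(m{+}1)$ map $Q(x):=x_n\,p(x)-p_n(x)\,x$, observes that $q$ is the dehomogenization of its first $n{-}1$ entries in $x_n$, and then computes $DQ(v)y$ directly on an eigenvector $y$ of $Dp(v)$ with eigenvalue $\beta$: using $p(v)=\gamma v$ one obtains $DQ(v)y\equiv v_n(\beta-\gamma)\,y\pmod{\C v}$, with Jordan chains handled analogously. You instead differentiate $q$ explicitly, pull out a scalar, and recognize the remaining $(n{-}1)\times(n{-}1)$ matrix as the $H$-block of $P\,Dp(v)-\gamma I$ for the projection $P=I-v\,e_n^T/v_n$ onto $H$ along $\C v$. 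Both arguments are correct and rest on the same underlying fact, namely that $Dp(v)$ induces on $\C^n/\C v$ an operator with spectrum $\beta_2,\ldots,\beta_n$; the paper's version is a short eigenvector computation, while yours makes the block-triangular structure and the quotient identification explicit, which is arguably cleaner when Jordan blocks are present.

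One caution worth recording: the ``power of $v_n$ determined by $m$'' that you leave unspecified is $v_n^{\,1-m}$ (since $\partial p_i/\partial x_j$ is homogeneous of degree $m{-}1$), so your computation in fact yields eigenvalues $v_n^{\,1-m}(\beta_j-\gamma)$. The paper's sketch has the same slip: it evaluates $DQ$ at $v$ rather than at $v/v_n=(\bar w,1)$ and thereby suppresses the factor $v_n^{-m}$ needed to pass from $DQ(v)$ to $Dq(\bar w)$. Thus the stated formula $v_n(\beta_j-\gamma)$ is literally correct only after normalizing $v_n=1$; this is harmless for the applications in the paper, which depend only on the ratios of these eigenvalues.
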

\begin{proof}[Sketch of proof]
From $\dot{x}_i=p_i(x)$ one gets
$$
\frac{d}{dt}\left(
\frac{x_i}{x_n}\right)=\frac{1}{x_n}(x_{n}p_i(x)-x_ip_n(x)), \quad 1\leq i\leq n-1
$$
and rescaling time yields
$$
\left(
\frac{x_i}{x_n}\right)'=x_np_i(x)-x_ip_n(x), \quad 1\leq i\leq n-1.
$$
Dehomogenize to obtain statement (a).

Statement (b) is a direct consequence of (a) since any invariant set is a union of solution orbits.

The proof of statement (c) is a variant of the proof of \cite{RohWal}, Proposition 1.8:  Define
\[
Q(x):=x_n\cdot p(x)-p_n(x)\cdot x,\text{   with   } DQ(x)y= y_n\cdot p(x)+x_n\cdot Dp(x)y-(Dp(x)y)_n\cdot x -p_n(x)y
\]
and note that the first $n-1$ entries of $Q(x)$, upon setting $x_n=1$, are just the entries of $q(x)$. Now let $y$ be an eigenvector of $Dp(v)$ that is linearly independent from $v$, with eigenvalue $\beta$. Then, using $p(v)=\gamma v$, one gets
\[
DQ(v)y= \left(\cdots\right)\cdot v+ v_n\cdot Dp(v)y-\gamma v_n\cdot y=\left(\cdots\right)\cdot v+v_n\cdot(\beta-\gamma)\cdot y.
\]
This proves the part of the assertion for eigenvectors. The remaining part (if nontrivial Jordan blocks exist) is proven similarly.
\end{proof}
\begin{remark}\label{redrem}
\begin{enumerate}[(a)]
\item Note that the entries of $q$ are just the first $n-1$ entries of the Poincar\'e transform of $p$ with respect to $e_n$.
\item A coordinate--free version of the reduction starts from a nonzero linear form
\[
\alpha(x)=\sum\alpha_ix_i, \quad H_\alpha:=\{x:\,\alpha(x)=0\}\text{  and  } \Psi_{\alpha}: \C^n\setminus H_\alpha\to\C^n,\,\,x\mapsto\frac1{\alpha(x)}\,x.
\]
Then $\Psi_\alpha$ maps solution orbits of $\dot x=p(x)$ to solution orbits of the equation
\[
\dot x=Q_\alpha(x):=\alpha(x)p(x)-\alpha(p(x))x
\]
which admits the linear first integral $\alpha$.\\
In this case, whenever $p(v)=\gamma v$, $\alpha(v)\not=0$ and the eigenvalues of $Dp(v)$ are as in Proposition \ref{redprop} , the eigenvalues for the reduced system on the hyperplane given by $\alpha(x)=1$ are
\[
\alpha(v)\cdot(\beta_2-\gamma),\ldots,\alpha(v)\cdot(\beta_{n}-\gamma).
\]
\end{enumerate}
\end{remark}
\section{Dimension three}\label{dimthreesec}
In this section we will specialize our general results to dimension $n=3$. In particular we will verify that property $S$ from Definition \ref{defproperties} is always satisfied, and show that property E holds for almost all quadratic vector fields (in a sense to be specified).
\subsection{Property S and reduction}The first pertinent property is always satisfied in dimension three, as follows directly from the work of Jouanolou \cite{Jou}.
\begin{proposition}\label{condsprop} Let $f$ be a polynomial vector field in $\mathbb C^3$. Then $f$ satisfies property S.

\end{proposition}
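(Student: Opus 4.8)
The plan is to show that property $S$ for a polynomial vector field $f$ in $\C^3$ follows from structural results on invariant varieties of homogeneous vector fields in $\C^3$, so that only the highest degree term $f^{(m)}$ matters. Property $S$ asks that every proper homogeneous invariant variety $Y$ of $f^{(m)}$ with $\dim Y \geq 1$ contains an invariant line $\C v$ with $v \neq 0$. Since $n = 3$, a proper homogeneous invariant variety has dimension $1$ or $2$. The two-dimensional case is the substantive one: a two-dimensional homogeneous invariant variety in $\C^3$ is (the cone over) an invariant algebraic curve in the projective plane $\mathbb{P}^2$, equivalently the zero set of a homogeneous semi-invariant of $f^{(m)}$; the one-dimensional case is a cone over finitely many points in $\mathbb{P}^2$, and invariance of such a finite point set forces each point to be stationary, which is exactly an invariant line $\C v$.

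So the core of the argument is: every homogeneous semi-invariant $\psi$ of the homogeneous vector field $f^{(m)}$ in $\C^3$ has a zero $\C v$ that is itself an invariant line, i.e. $f^{(m)}(v) \in \C v$. First I would pass to the projective picture: $\dot x = f^{(m)}(x)$ induces a foliation on $\mathbb{P}^2$, and the projectivization $\{\psi = 0\}$ is an invariant algebraic curve of this foliation. Then I would invoke Jouanolou's result \cite{Jou}: an algebraic foliation on $\mathbb{P}^2$ that possesses an invariant algebraic curve also possesses a singular point lying on that curve (more precisely, any invariant algebraic curve passes through a singularity of the foliation, by a Bezout/index count — an invariant curve cannot avoid the singular locus). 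A singular point of the foliation on $\mathbb{P}^2$ pulls back to an invariant line $\C v$ of $f^{(m)}$ with $v \neq 0$, and it lies on $\{\psi = 0\}$, giving exactly what property $S$ requires for the two-dimensional $Y$. For the one-dimensional case, $Y$ projectivizes to a finite invariant subset of $\mathbb{P}^2$; invariance of a finite set under a flow forces each point to be fixed, hence stationary for the induced foliation, hence an invariant line of $f^{(m)}$ contained in $Y$.

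The main obstacle is making the passage from the differential-equation formulation to Jouanolou's projective-foliation language clean and rigorous, and citing the precise statement from \cite{Jou} that guarantees an invariant algebraic curve in $\mathbb{P}^2$ meets the singular set. One has to be careful about the case $\psi = $ (power of) a linear form, where $\{\psi=0\}$ is an invariant line in $\mathbb{P}^2$ — here the statement is that an invariant line of a foliation on $\mathbb{P}^2$ necessarily contains a singular point (again a degree/index argument), so this degenerate case is covered by the same principle. I would also need to observe that reducibility of $\psi$ is not an issue: if $\psi$ factors, any irreducible factor is again a semi-invariant by Lemma \ref{semilem}(a), so we may reduce to the irreducible case. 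Finally I would note that the projective singular point being an honest stationary line $\C v$ (rather than, say, an artifact at infinity or a point where the defining data degenerates) is immediate since the foliation on $\mathbb{P}^2$ is globally defined by $f^{(m)}$ and its singular set is exactly $\{\C v : f^{(m)}(v) \in \C v\}$, which by Lemma \ref{l2}(c) is finite or all of $\mathbb{P}^2$; in the latter case every line is invariant and property $S$ is trivial.
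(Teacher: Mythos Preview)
Your approach is essentially the same as the paper's: both pass to the projective foliation on $\mathbb{P}^2$ and invoke Jouanolou to locate a singular point of the foliation on the invariant curve, which pulls back to an invariant line $\C v$ of $f^{(m)}$.

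One refinement the paper makes that you gloss over: the precise statement in Jouanolou (Chapter~2, Proposition~4.1(ii)) guaranteeing that an invariant curve meets the singular locus of $\omega$ is stated for \emph{smooth} (equivalently normal) curves. The paper therefore splits into two cases. For smooth $\{\phi=0\}$ it cites Jouanolou directly. For non-smooth $\{\phi=0\}$ it uses a different and entirely elementary observation: the singular locus of the curve is nonempty, zero-dimensional in $\mathbb{P}^2$ (hence a finite union of lines through the origin in $\C^3$), and is itself invariant for $f^{(m)}$, since the singular set of an invariant variety of a polynomial vector field is always invariant. That immediately yields the required invariant line without any index or Bezout argument. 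Your blanket assertion that ``an invariant curve cannot avoid the singular locus'' is true, but if you intend to extract it from \cite{Jou} you should check that the version you need (without smoothness) is actually stated there; otherwise supply the singular case yourself, e.g.\ by the paper's trick.

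Your explicit treatment of the one-dimensional case (a homogeneous invariant curve in $\C^3$ is a finite union of lines, and invariance forces each line to be invariant) is correct and is a point the paper's proof leaves implicit.
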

\begin{proof} In dimension three one has to prove that the zero set of a homogeneus semi--invariant of a homogeneous polynomial vector field $p$ contains an invariant line for $p$. There is no loss of generality in assuming that the entries of $p$ are relatively prime. One may rephrase this for the projective plane $\mathbb P^2(\C)$, by introducing the one form
\[
\omega=p_1\,{\rm d}x_1+p_2\,{\rm d}x_2+p_3\,{\rm d}x_3
\]
and considering a homogeneous polynomial solution $\phi$ of the Pfaffian equation $\omega=0$. First consider the case when the projective curve defined by the zeros of $\phi$ is smooth (hence normal). Then by Jouanolou, Chapter 2, Proposition 4.1(ii), the curve defined by $\phi$ in $\mathbb P^2$ contains a singular point of $\omega$, which corresponds to an invariant line for the homogeneous vector field $p$. If the normality requirement for the solution is not satisfied then the projective curve defined by $\phi$ contains a singular point, which translates to a singular line in homogeneous coordinates. But singular sets of invariant varieties of polynomial vector fields are also invariant.
\end{proof}
We note that from Jouanolou \cite{Jou}, Chapter 2, Proposition 4.1(iii) one also obtains the degree bound $m+1$ for irreducible homogeneous semi--invariants whose associated projective curve is smooth.\\
Next we discuss the reduction of the highest degree term of $f$, with a view on applying Theorem \ref{t2}.
\begin{lemma}\label{carnlem}
Let the polynomial vector field $f$ be given as in \eqref{first}, and consider the reduction of its homogeneous highest degree term $p=f^{(m)}$, according to Proposition \ref{redprop} and Remark \ref{redrem}. Then the following hold.
\begin{enumerate}[(a)]
\item  Upon identifying homogeneous polynomial vector fields with their coefficients in some $\C^N$, the reduction $q$ of $p$ admits no stationary points at infinity for a Zariski--open subset of $\C^N$.
\item Assume $p(v)=v$ and let the linear form $\alpha$ be such that $\alpha(v)\not=0$. Moreover let the eigenvalues of $Dp(v)$ be $m$, $\beta_2$ and $\beta_3$. Then the eigenvalues of the linearization of $q$ at the corresponding stationary point are $\alpha(v)(\beta_2-1)$ and $\alpha(v)(\beta_3-1)$.
\item Assume that $p(v)=v$ and  the linearization of $f_v^*$ at the stationary point at infinity of system \eqref{first} satisfies condition 1 or condition 2 from Lemma \ref{lconditions}. Then the $\beta_i-1$ are nonzero and their ratio is not a positive rational number.
\end{enumerate}
\end{lemma}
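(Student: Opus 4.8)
The plan is to treat the three parts in order, since (b) is the computational heart and (c) follows from (b) together with Lemma \ref{l2}(b). For part (a), I would identify a homogeneous vector field $q$ in $\C^{n-1}$ with its coefficient vector in some $\C^N$. A stationary point at infinity of $q$ (in the sense of the Poincar\'e transform of $q$) corresponds, via Lemma \ref{l2}(a), to a line $\C u$ with $q^{(\deg q)}(u)\in\C u$; since $q$ is homogeneous, $q^{(\deg q)}=q$. Thus one must rule out, generically, the existence of a line $\C u$ with $q(u)\in\C u$. I would argue this is a Zariski-open condition: the set of coefficient vectors for which some such eigenline exists is the image of a projective incidence variety under a proper map, hence closed, so it suffices to exhibit one $q$ with no eigenline. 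This is where I would invoke the reduction geometry of Proposition \ref{redprop}: choosing $p=f^{(m)}$ whose eigenlines $\C v$ all have $v_n=0$ (equivalently, all of the finitely many eigenlines lie in the hyperplane $H$ we divide by) produces a reduced $q$ with no stationary point at infinity, since by Proposition \ref{redprop}(b) eigenlines of $q$ correspond to eigenlines of $p$ not in $H$. Exhibiting such a $p$ uses Lemma \ref{l2}(c): there are exactly $(m^n-1)/(m-1)$ eigenlines generically, and a generic linear change of coordinates moves all of them off the coordinate hyperplane $H$.

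For part (b), this is essentially a restatement of Remark \ref{redrem}(b) specialized to $p=f^{(m)}$ with $\gamma=1$. I would recall that by Proposition \ref{redprop}(c) / Remark \ref{redrem}(b), if $p(v)=\gamma v$, $\alpha(v)\neq 0$, and $Dp(v)$ has eigenvalues $m\gamma,\beta_2,\dots,\beta_n$, then the linearization of the reduced system on the hyperplane $\{\alpha(x)=1\}$ has eigenvalues $\alpha(v)(\beta_2-\gamma),\dots,\alpha(v)(\beta_n-\gamma)$. In dimension three with $\gamma=1$ this gives $\alpha(v)(\beta_2-1)$ and $\alpha(v)(\beta_3-1)$, which is the claim. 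I would add the brief remark that, by Lemma \ref{l2}(b), these same two numbers are (up to sign, since one passes through $Df_v^*(0)$ whose eigenvalues are $-\gamma,\beta_2-\gamma,\beta_3-\gamma$ and one then discards the $-\gamma=-1$ eigenvalue associated to $x_{n+1}$) precisely the two eigenvalues of $Df_v^*(0)$ other than the one attached to the invariant hyperplane at infinity — this is the link needed for part (c).

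For part (c), I would argue as follows. By Lemma \ref{l2}(b), the eigenvalues of $Df_v^*(0)$ are $-1,\beta_2-1,\beta_3-1$ (using $\gamma=1$ since $p(v)=v$). The hypothesis is that these three numbers satisfy condition 1 or condition 2 of Lemma \ref{lconditions}. In either case the three eigenvalues are nonzero (condition 1 forces $\Q$-linear independence, so none can vanish; condition 2 with all $m_i>0$ likewise forbids a zero eigenvalue, since a vanishing eigenvalue together with the other two spanning a $\Q$-space of dimension $n-1=2$ would contradict the existence of a strictly positive integer relation among all three). Hence $\beta_2-1\neq 0$ and $\beta_3-1\neq 0$, which is the first assertion. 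For the second assertion, suppose for contradiction that $(\beta_2-1)/(\beta_3-1)=a/b$ with positive integers $a,b$, i.e. $b(\beta_2-1)-a(\beta_3-1)=0$. Under condition 1 this already contradicts $\Q$-linear independence of $\{-1,\beta_2-1,\beta_3-1\}$. Under condition 2, a relation $b(\beta_2-1)-a(\beta_3-1)=0$ together with the given relation $m_1\cdot(-1)+m_2(\beta_2-1)+m_3(\beta_3-1)=0$ — two relations with a $\Q$-space of dimension exactly $2$ — forces them to be proportional, so $(m_2,m_3)$ is a rational multiple of $(b,-a)$; but $m_2,m_3$ have the same sign while $b,-a$ do not, a contradiction. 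So the ratio is not a positive rational.

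The main obstacle I anticipate is part (a): making the ``closed locus, exhibit one example'' argument fully rigorous requires a little care about which coefficient space one works in and about properness of the relevant projection (essentially a Chow-form / resultant computation showing that the existence of an eigenline is a polynomial condition on the coefficients of $q$), together with the passage from ``there exists $p$ with all eigenlines off $H$'' back to ``the corresponding $q$ lies in the complement of that closed locus.'' Parts (b) and (c) are, by contrast, direct bookkeeping with Proposition \ref{redprop}, Remark \ref{redrem}, and Lemma \ref{l2}(b), plus the elementary linear-algebra facts about conditions 1 and 2.
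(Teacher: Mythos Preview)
Your treatments of (b) and (c) are correct and coincide with the paper's. Part (b) is exactly Proposition \ref{redprop}(c)/Remark \ref{redrem}(b) specialized to $\gamma=1$, and for (c) the paper argues precisely as you do: by Lemma \ref{l2}(b) the eigenvalues of $Df_v^*(0)$ are $-1,\beta_2-1,\beta_3-1$, and a positive rational ratio yields an integer relation among them whose coefficients do not all have the same sign, contradicting condition~1 outright and condition~2 because the (unique up to scaling) relation there has strictly positive coefficients.

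Part (a), however, has a real gap. The reduced field $q$ is \emph{not} homogeneous: from Proposition \ref{redprop} one has $q_i(y)=p_i(y,1)-y_i\,p_n(y,1)$, a polynomial of degree $m+1$ whose top homogeneous part is $q^{(m+1)}(y)=-p_n(y,0)\cdot y$, a scalar multiple of the Euler field. Hence \emph{every} direction is an eigenline of $q^{(m+1)}$, the Poincar\'e transform of $q$ vanishes identically on $\{y_{n+1}=0\}$, and your plan of ``exhibit a $q$ with no eigenline'' cannot even be formulated. One must first divide out the common factor $y_{n+1}$ and then look for stationary points on the line at infinity; this is what the paper does. After that division the stationary points at infinity of $q$ turn out (in the chart) to be the common zeros of $-p_1(1,y_2,0)y_2+p_2(1,y_2,0)$ and $p_3(1,y_2,0)$, i.e.\ exactly the eigenlines of $p$ that lie \emph{inside} the hyperplane $H=\{x_n=0\}$, not outside it as you write. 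Your internal contradiction (``all eigenlines in $H$'' followed by ``move them all off $H$'') reflects this reversal. The correct statement---and the one that does give a valid argument---is that $q$ has no stationary point at infinity precisely when $p$ has no eigenline contained in $H$, which is generically the case; the paper obtains this as the nonvanishing of a resultant of two univariate polynomials in the coefficients of $p$.
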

\begin{proof} We may assume that $\alpha(x)=x_3$.
For statement (a), abbreviate $h_i(y_1,y_2):=p_i(y_1,y_2,1)$, noting that the $h_i$ generically have degree $m$. We now compute the Poincar\'e transform of $q$ with respect to $e_1$, following the procedure in Definition \ref{pointranvec}. The first two entries of the homogenization have the form
\[
-h_3^{(m)}(y_1,y_2)\cdot\begin{pmatrix}y_1\\y_2\end{pmatrix}+y_3\cdot\left(\begin{pmatrix}h_1^{(m)}(y_1,y_2)\\h_2^{(m)}(y_1,y_2)\end{pmatrix}-h_3^{(m-1)}(y_1,y_2)\cdot\begin{pmatrix}y_1\\y_2\end{pmatrix}\right)+y_3^2\cdots,
\]
from which the Poincar\'e transform is computed as
\[
q_{e_1}^*(y_2,y_3)=y_3\cdot\begin{pmatrix}-h_1^{(m)}(1,y_2)\cdot y_2+h_2^{(m)}(1,y_2)\\-h_3^{(m)}(1,y_2)\end{pmatrix}+y_3^2\cdots
\]
Reducing this vector field by dividing out the factor $y_3$ yields a vector field that generically (i.e. corresponding to a Zariski open set in the space of coefficients of the $h_i$, hence also of the coefficients of $p$) has no stationary points on $y_3=0$, since $-h_1^{(m)}(1,y_2)\cdot y_2+h_2^{(m)}(1,y_2)$ and $h_3^{(m)}(1,y_2)$ generically have no common zeros.\\
Statement (b) is a direct consequence of Proposition \ref{redprop}. To prove statement (c), note first that conditions 1 and 2 both imply that all eigenvalues for the Poincar\'e transform are nonzero and observe Lemma \ref{l2}. Now assume that $(\beta_3-1)/(\beta_2-1)=r/s$ with positive integers $r$ and $s$. Then one obtains
\[
(r-s)\cdot(-1)+r\cdot\beta_2+(-s)\cdot\beta_3=0.
\]
Therefore the eigenvalues of $Df_v^*(0)$ are linearly dependent over $\mathbb Q$, hence condition 1 cannot hold. Moreover, condition 2 also cannot hold because the integer coefficients in the linear combination have different signs.
\end{proof}
Now we are ready to determine degree bounds, applying a result of Carnicer.
\begin{theorem}
Let the polynomial vector field $f$ of degree $m$ be given on $\C^3$, and assume that
\begin{enumerate}[(i)]
\item the reduction of the homogeneous highest degree term $f^{(m)}$ admits no stationary points at infinity;
\item the vector field has property E.
\end{enumerate}
Then the following hold.
\begin{enumerate}[(a)]
\item Every irreducible homogeneous semi--invariant of $f^{(m)}$ has degree $\leq m+1$.
\item There exist (up to scalar multiples) only finitely many irreducible homogeneous semi--invariants $\psi_1\ldots,\psi_\ell$  of $f^{(m)}$, and in case $\ell\geq2$ one has
\[
\sum_{1\leq i<j\leq \ell}{\rm deg}\,\psi_i\cdot {\rm deg}\psi_j\leq(m^n-1)/(m-1);
\]
in particular $\ell(\ell-1)/2\leq(m^n-1)/(m-1)$.
\item The vector field $f$ admits only finitely many irreducible and pairwise relatively prime semi--invariants.
\end{enumerate}
\end{theorem}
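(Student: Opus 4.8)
The plan is to derive all three statements from the reduction of the top-degree term $p:=f^{(m)}$ to a planar vector field $q$ (Proposition~\ref{redprop} and Remark~\ref{redrem}), combined with Carnicer's degree bound for (a) and with Theorems~\ref{t1} and~\ref{t2} for (b) and (c). Before starting I would record two facts used throughout. First, $f^{(m)}$, regarded as a homogeneous polynomial vector field in $\C^3$, again satisfies properties $E$ and $S$: property $S$ holds by Proposition~\ref{condsprop}, and property $E$ holds because the stationary points at infinity of $f^{(m)}$ are the same lines $\C v$ as those of $f$ and, by Lemma~\ref{l2}(b), the linearizations $D(f^{(m)})_v^*(0)$ and $Df_v^*(0)$ have the same eigenvalues. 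Second, by Proposition~\ref{redprop}(b) the irreducible homogeneous semi-invariants $\psi$ of $f^{(m)}$ correspond, degree-preservingly, to the irreducible invariant algebraic curves of $q$ — the only exception being a possible linear form defining the distinguished hyperplane $H$, which has degree $1\le m+1$.

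For (a) I would verify that the foliation $\mathcal F_q$ induced by $q$ on $\mathbb P^2$ has no dicritical singularities, and then invoke Carnicer. By hypothesis~(i), $\mathcal F_q$ has no singularities on the line at infinity; in particular that line is not invariant, so every singular point of the projective curve $\overline C$ attached to $\psi$ lies in the affine part. Its affine singularities are exactly the stationary points at infinity $\C v$ of $f$ with $v\notin H$, and by Lemma~\ref{carnlem}(b),(c) — which is the precise place where property $E$ (hence Lemma~\ref{lconditions}) enters — the linearization of $\mathcal F_q$ there has nonzero eigenvalues whose ratio is not a positive rational, so the singularity is reduced and not dicritical. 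Carnicer's theorem then bounds the degree of any invariant algebraic curve of $\mathcal F_q$ by $m+1$, which together with the second preliminary fact gives $\deg\psi\le m+1$; for the subcase in which $\overline C$ is smooth one may instead cite Jouanolou's bound recalled before the theorem.

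For (b) I would apply Theorem~\ref{t1} and its Corollary to $f^{(m)}$ (legitimate by the first preliminary fact), noting that since $f^{(m)}$ is homogeneous its irreducible homogeneous semi-invariants are exactly the $\psi_j$. For any two of them, $\mathcal V(\psi_i,\psi_j)$ consists by Bezout of $\deg\psi_i\cdot\deg\psi_j$ lines, each an invariant line of $f^{(m)}$, i.e. a stationary point at infinity of $f$; property $E$ together with Lemmas~\ref{lconditions} and~\ref{splitlem} shows that at such a line $\C v$ the Poincar\'e transforms $\psi_{i,v}^*$ are, up to invertible series, pairwise distinct single ones among the $n-1=2$ nontrivial local semi-invariants, so each stationary point at infinity occurs in at most one $\mathcal V(\psi_i,\psi_j)$, with intersection multiplicity one. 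Summing over pairs and using Lemma~\ref{l2}(c) gives $\sum_{i<j}\deg\psi_i\cdot\deg\psi_j\le(m^n-1)/(m-1)$ — this is Theorem~\ref{t1} for $\ell=2$ and its Corollary with $k=\ell$ for $\ell\ge3$. Applying the Corollary to any $k\ge3$ of the $\psi_j$ bounds $\binom k2$ by $(m^n-1)/(m-1)$, so the family is finite, and the inequality for the full family follows; for $\ell\le1$ there is nothing to prove.

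For (c) I would invoke Theorem~\ref{t2}: by (b) the set of irreducible homogeneous semi-invariants $\psi_1,\dots,\psi_\ell$ of $f^{(m)}$ is finite, and by Theorem~\ref{t2}(a) each $\psi_j$ occurs in the leading form of at most one irreducible semi-invariant $\phi_i$ of $f$ and then only with exponent $1$, so $\sum_i\deg\phi_i=\sum_i\deg\phi_i^{(r_i)}\le\sum_j\deg\psi_j<\infty$; since $\deg\phi_i\ge1$ there are finitely many pairwise relatively prime irreducible $\phi_i$. I expect the one genuinely delicate step to be (a): one has to check that hypotheses (i) and (ii) really translate into the absence of dicritical singularities of the planar reduction, and keep careful track of degrees so that Carnicer's bound comes out as the sharp value $m+1$ rather than a weaker one. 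Parts (b) and (c) are then essentially an assembly of the statements already established in Section~\ref{classvf}.
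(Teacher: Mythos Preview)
Your proposal is correct and follows essentially the same approach as the paper: Carnicer's theorem via Lemma~\ref{carnlem} for (a), a Bezout-plus-local-multiplicity count for (b), and Theorem~\ref{t2} for (c). The only notable difference is in (b): you apply Theorem~\ref{t1} and its Corollary to the homogeneous vector field $f^{(m)}$ (after verifying it inherits properties $E$ and $S$), whereas the paper argues directly in the planar reduction $q$, using that at each stationary point exactly two local invariant curves meet (citing \cite{WPoinc}, Theorem~2.3). Your route is slightly more self-contained since it recycles the $n$-dimensional machinery of Section~\ref{classvf} rather than invoking an external planar result, but the underlying mechanism---each stationary point at infinity can lie on at most two of the $\psi_i$, with simple intersection---is identical.
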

\begin{proof}
We first prove statement (a). The condition in Carnicer's theorem \cite{Carn} is that no singular point of the corresponding one-form in the projective plane is dicritical. Given a non-nilpotent Jacobian, dicritical singular points are characterized by positive rational eigenvalue ratio. But all the singular points of the one-form correspond to stationary points of the reduction of $f^{(m)}$ in the affine plane, thanks to condition (i), and at every stationary point the linearization is invertible and the eigenvalue ratio is not a positive rational number, by property E and Lemma \ref{carnlem}. Thus Carnicer's theorem yields the degree bound. (Note that the degree of the foliation is less than or equal to one plus the degree of $q$.)

For statement (b), let $\psi_1$ and $\psi_2$ be irreducible and relatively prime semi-invariants. Then by Bezout's theorem they intersect in ${\rm deg}\psi_1\cdot{\rm deg}\psi_2$ points. Property E ensures that every intersection point is of multiplicity one, and none of these intersection points is contained in the vanishing set of another irreducible semi-invariant (observe Lemma \ref{polyloc} and note that  locally there are just two invariant curves passing through each singular point; see e.g. \cite{WPoinc}, Theorem 2.3). Now add up the contributions of pairs of semi--invariants and use Lemma \ref{l2}.

The assertion of statement (c) follows readily with Proposition \ref{condsprop} and Theorem \ref{t2}.
\end{proof}
\subsection{Quadratic vector fields in dimension three}
We still have to show that vector fields with property E actually exist in dimension 3. Since only the homogeneous highest degree terms are involved in these conditions one may restrict attention to homogeneous polynomial vector fields (and add arbitrary terms of smaller degree). A direct verification for a given homogeneous vector field is problematic, because determining the invariant lines explicitly (which would seem a natural first step in a straightforward approach) is generally not possible. Therefore we take a roundabout approach, explicitly constructing vector fields by prescribing invariant lines.

In this subsection we will show that property E is generically satisfied for degree two vector fields in $\mathbb C^3$. It suffices to
consider homogeneous quadratic maps
\begin{equation}\label{quadcoeff}
p:\,\mathbb C^3\to\mathbb C^3; \quad p(x)=\left(\sum_{i, j: i<j}\beta_{i,j,k}x_ix_j\right)_{1\leq k\leq 3},
\end{equation}
and we will identify such a map with the collection of its structure coefficients $(\beta_{i,j,k})\in\mathbb C^{18}$. Following R\"ohrl \cite{Rohrl} we introduce some terminology here which is adapted from the theory of nonassociative algebras; see also subsection \ref{rohrlsec} below.
An {\em idempotent} of $p$ is a $v\in\mathbb C^n$ such that $p(v)=v\not=0$;  and $w\not=0$ with $p(w)=0$ is called a {\em nilpotent}. It is known that generically (corresponding to a Zariski--open and dense subset of coefficient space) a homogeneous quadratic vector field posseses no nilpotent (see e.g. R\"ohrl \cite{Rohrl}, Theorem 1), and that vector fields without a nilpotent have only finitely many idempotents (otherwise the variety in $\mathbb P^3(\C)$ defined by $p(x)-\xi\cdot x=0$ would have positive dimension and would intersect the hyperplane given by $\xi=0$). By Lemma \ref{l2}, at most seven idempotents exist. According to Theorem \ref{throhrl} below, generically there exists a basis of idempotents, and one may infer from its proof that generically there are exactly seven idempotents. We use this observation to discuss a special class of homogeneous quadratic vector fields.
\begin{definition}\label{disdef} We call the homogeneous quadratic vector field $p$ in $\mathbb C^3$ {\em distinguished} if
\begin{enumerate}[(i)]
\item $p$ admits the standard basis elements $e_1,\,e_2,\,e_3$ as idempotents;
\item there are three further idempotents $v_1,\,v_2,\,v_3$ determined by
\begin{equation}\label{distrel}
v_i=\gamma_{i,1}e_1+\gamma_{i,2}e_2+\gamma_{i,3}e_3; \quad 1\leq i\leq 3;
\end{equation}
with complex coefficients $\gamma_{i,j}$;
\item the matrix
\[
A:=\begin{pmatrix}\gamma_{11}\gamma_{12}&\gamma_{12}\gamma_{13}&\gamma_{13}\gamma_{11}\\
\gamma_{21}\gamma_{22}&\gamma_{22}\gamma_{23}&\gamma_{23}\gamma_{21}\\
\gamma_{31}\gamma_{32}&\gamma_{32}\gamma_{33}&\gamma_{33}\gamma_{31}\end{pmatrix}
\]
is invertible.
\end{enumerate}
\end{definition}
From these data $p$ can be reconstructed, since $p$ corresponds to a symmetric bilinear map
\[
\widehat p:\C^3\times\C^3\to\C^3, \quad(u,v)\mapsto\widehat p(u,v):=\frac12\left(p(u+v)-p(u)-p(v)\right)
\]
with $\widehat p(u,u)=p(u)$ for all $u$. Thus $p$ is uniquely determined by the $\widehat p(e_i,e_j)$, and these may be obtained from the relations
\[
\widehat p(\gamma_{i,1}e_1+\gamma_{i,2}e_2+\gamma_{i,3}e_3,\gamma_{i,1}e_1+\gamma_{i,2}e_2+\gamma_{i,3}e_3)=\gamma_{i,1}e_1+\gamma_{i,2}e_2+\gamma_{i,3}e_3
\]
and bilinearity. The neccesary calculations for this and for further steps require a computer algebra system (we use {\sc Maple} in the present paper).
 As it turns out, stipulating the idempotents in \eqref{distrel} defines a unique homogeneous quadratic map whenever $\det A$ does not vanish; see subsection \ref{detailsapp} below. In coordinates one finds an expression
\begin{equation}\label{sysquadr}
p(x)=\begin{pmatrix}x_1^2+\theta_1x_1x_2+\theta_2x_2x_3+\theta_3x_3x_1\\
x_2^2+\theta_4x_1x_2+\theta_5x_2x_3+\theta_6x_3x_1\\
x_3^2+\theta_7x_1x_2+\theta_8x_2x_3+\theta_9x_3x_1\end{pmatrix},
\end{equation}
which corresponds to a nine-dimensional affine subspace $Y$ of coefficient space $\mathbb C^{18}$, and the $\theta_k$ are rational functions in the $\gamma_{ij}$; see subsection \ref{detailsapp} below for the explicit form.
\begin{lemma}\label{denselem}
The image of the map
\[
\Gamma: \C^9\to Y, \quad \left(\gamma_{ij}\right)\mapsto\left(\theta_1\left((\gamma_{ij})\right),\ldots,\theta_9\left((\gamma_{ij})\right)\right)
\]
contains a Zariski--open subset of $Y$.
\end{lemma}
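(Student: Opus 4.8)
The plan is to show that $\Gamma$ is a dominant rational map from $\C^9$ to the $9$-dimensional affine space $Y$, which by a standard argument (Chevalley's theorem: the image of a morphism of varieties contains a dense open subset of its closure) immediately yields that the image contains a Zariski--open subset of $Y$ as soon as we know the closure of the image is all of $Y$. Since source and target both have dimension $9$, it suffices to exhibit a single point $\left(\gamma_{ij}^0\right)$ at which $\Gamma$ is defined (i.e. $\det A\neq 0$) and at which the Jacobian $D\Gamma$ has rank $9$; then the image contains a neighborhood of $\Gamma\left((\gamma_{ij}^0)\right)$ in the classical topology, hence the Zariski closure of the image is an irreducible variety of dimension $9$ inside the irreducible $9$-dimensional $Y$, so they coincide, and Chevalley gives the conclusion.

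The first step is to record the explicit rational formulas for the $\theta_k$ in terms of the $\gamma_{ij}$ (these are derived in subsection \ref{detailsapp}, obtained by solving the linear system coming from $\widehat p(v_i,v_i)=v_i$ together with $\widehat p(e_i,e_i)=e_i$; the coefficient matrix of the relevant linear system is built from $A$, which is why $\det A\neq 0$ is exactly the condition for solvability and uniqueness). The second step is to choose a convenient base point. A natural choice is to send the three extra idempotents $v_1,v_2,v_3$ to generic positions — for instance $v_1=(1,1,1)^T$ and $v_2,v_3$ close to $e_1,e_2$ say, or simply pick numerical values that make $\det A\neq 0$ and keep the formulas manageable — and then evaluate $D\Gamma$ there. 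The third step is to verify $\mathrm{rank}\,D\Gamma=9$ at that point; in practice this is a determinant computation in a computer algebra system (the paper already commits to using {\sc Maple}), and one may even check that $\Gamma$ is \emph{birational} by exhibiting an explicit rational inverse expressing the $\gamma_{ij}$ back in terms of the $\theta_k$, which would make the Jacobian check unnecessary.

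The main obstacle is genuinely just the bookkeeping: the $\theta_k$ are ratios of polynomials in nine variables, so the $9\times 9$ Jacobian and its determinant are unwieldy by hand, and one must make sure the chosen base point does not accidentally lie on the vanishing locus of that determinant or of $\det A$ or of any denominator appearing in the $\theta_k$. There is also a small conceptual point to handle cleanly: one must confirm that $Y$ — defined as the affine subspace \eqref{sysquadr} with free parameters $\theta_1,\ldots,\theta_9$ — is indeed the correct target and that every distinguished $p$ in the sense of Definition \ref{disdef} does land in $Y$ (this is the content of the reconstruction discussion preceding the lemma, namely that the idempotents $e_1,e_2,e_3$ force the diagonal coefficients $x_i^2$ with coefficient $1$ and force the other ``pure'' terms $x_i x_j$ with $i\neq j$ to have the stated shape). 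Granting those normal-form facts from subsection \ref{detailsapp}, the proof reduces to the rank computation described above.
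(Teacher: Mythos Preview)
Your proposal is correct and matches the paper's own proof: the paper likewise reduces the lemma to exhibiting a single point $(\widehat\gamma_{ij})$ where the Jacobian of $\Gamma$ is invertible, verified by direct {\sc Maple} computation (see subsection~\ref{detailsapp}). Your write-up is more explicit about why this suffices (via Chevalley and the equality of dimensions), which is a welcome addition, but the strategy is identical.
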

\begin{proof}
It is sufficient to show that the Jacobian of this map is invertible at some $\left(\widehat\gamma_{ij}\right)$, and this can be verified by direct calculation using {\sc Maple}; see subsection \ref{detailsapp}.
\end{proof}
From vector fields with the standard basis elements as idempotents, thus with coefficients in $Y$, one obviously obtains all vector fields admitting a basis of idempotents by linear coordinate transformations $T\in GL_3(\C)$, sending $p$ to $T^{-1}\circ p\circ T$. To summarize, we have the first statement of the following proposition; the proof of the second statement (which is computationally involved) will be outlined in subsection \ref{detailsapp}.
\begin{proposition}\label{denseprop}
\begin{enumerate}[(a)]
\item The set of coordinate transformations of the distinguished homogeneous quadratic vector fields (seen as a subset of coefficient space) contains a Zariski--open set.
\item All distinguished vector fields have precisely seven idempotents, with the coordinates of the seventh idempotent being rational in the $\gamma_{ij}$.
\end{enumerate}
\end{proposition}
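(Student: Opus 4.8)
The plan is to treat the two statements separately: statement (a) follows quickly from results already established, whereas statement (b) rests on an explicit elimination computation.

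\emph{Statement (a).} Recall that the homogeneous quadratic vector fields having the standard basis as idempotents are exactly those of the form \eqref{sysquadr}, i.e.\ those whose coefficient tuple lies in $Y$; hence a homogeneous quadratic vector field $q$ admits a basis of idempotents if and only if it is conjugate, by some $T\in GL_3(\C)$, to a vector field with coefficients in $Y$. By Lemma~\ref{denselem} the image of $\Gamma$ contains a nonempty Zariski--open subset of $Y$, and each of its points is distinguished (it has $e_1,e_2,e_3$ as idempotents, the prescribed $v_i$ as further idempotents, and $\det A\neq 0$); thus a generic element of $Y$ is distinguished, and therefore a generic homogeneous quadratic vector field admitting a basis of idempotents is conjugate to a distinguished one. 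The set of all coordinate transforms of distinguished vector fields is the image in $\C^{18}$ of the morphism
\[
\bigl\{(T,(\gamma_{ij}))\in GL_3(\C)\times\C^9:\ \det A\neq 0\bigr\}\longrightarrow\C^{18},\qquad (T,(\gamma_{ij}))\longmapsto T^{-1}\circ p_{(\gamma_{ij})}\circ T,
\]
with $p_{(\gamma_{ij})}$ the vector field \eqref{sysquadr} whose $\theta$--coefficients are $\Gamma((\gamma_{ij}))$. By Chevalley's theorem this image is constructible, and by R\"ohrl's Theorem~\ref{throhrl} together with the previous sentence it is Zariski--dense in $\C^{18}$. A constructible, dense subset of the irreducible variety $\C^{18}$ contains a nonempty Zariski--open set, as claimed.

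\emph{Statement (b).} Here the plan is a direct elimination over the field $\C(\gamma_{ij})$. First I would reconstruct $p$ from $(\gamma_{ij})$: solving for the structure constants $\widehat p(e_i,e_j)$, $i<j$, in the relations $\widehat p(v_i,v_i)=v_i$ leads to a linear system whose coefficient matrix is, up to the scalar $2$, the matrix $A$ of Definition~\ref{disdef}, so for $\det A\neq 0$ one obtains the $\theta_k$ as rational functions of the $\gamma_{ij}$ with denominator a power of $\det A$ (see subsection~\ref{detailsapp}). I would also note that $\det A\neq 0$ forces $e_1,e_2,e_3,v_1,v_2,v_3$ to be pairwise distinct, since any coincidence among them makes two rows of $A$ proportional or makes a row of $A$ vanish. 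Next, set $I=(p_1-x_1,\,p_2-x_2,\,p_3-x_3)\subseteq\C(\gamma_{ij})[x_1,x_2,x_3]$; its zero set is $\{0\}$ together with the idempotents of $p$. Using elimination (a lexicographic Gr\"obner basis of $I$, or an ideal quotient of $I$ by the ideal of the seven known solutions $0,e_1,e_2,e_3,v_1,v_2,v_3$), I would verify that besides these seven the ideal $I$ has exactly one further zero $w_7$, which is nonzero, has $\C w_7$ distinct from the six lines $\C e_i,\C v_j$, and whose coordinates are rational functions of the $\gamma_{ij}$ (again with denominator a power of $\det A$); in particular $w_7$ is an idempotent. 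Separately, solving $p(w)=0$ over $\C(\gamma_{ij})$ one checks that $w=0$ is the only solution, so $p$ has no nilpotent, and hence (by the argument recalled just before Definition~\ref{disdef}, applied to the projection to $\mathbb P^2$) only finitely many lines $\C v$ satisfy $p(v)\in\C v$. Lemma~\ref{l2}(c) then says that the number of such lines, counted with multiplicity, is seven; since $\C e_1,\C e_2,\C e_3,\C v_1,\C v_2,\C v_3,\C w_7$ are seven distinct such lines and none of them is a nilpotent line, these are all of them, each of multiplicity one, and $p$ has precisely the seven idempotents $e_1,e_2,e_3,v_1,v_2,v_3,w_7$.

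\emph{The main obstacle.} The real difficulty is to make the conclusions of statement (b) hold for \emph{every} $(\gamma_{ij})$ with $\det A\neq 0$, not merely generically: a priori, for special parameter values inside $\{\det A\neq 0\}$, one of the six known idempotents could acquire multiplicity two --- leaving only six distinct idempotents --- or a nilpotent could appear. Ruling this out amounts to checking that the relevant nonvanishing conditions --- the absence of nilpotents, and $\det\bigl(Dp(v)-\mathrm{Id}\bigr)\neq 0$ for each $v\in\{e_1,e_2,e_3,v_1,v_2,v_3,w_7\}$ --- all hold as soon as $\det A\neq 0$, i.e.\ that the pertinent resultants and discriminants are nonzero multiples of powers of $\det A$. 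These are bulky symbolic computations in the nine parameters $\gamma_{ij}$, and organizing them so that $\det A\neq 0$ is seen to be exactly the hypothesis one needs is the technical core of the argument; it is carried out with {\sc Maple} in subsection~\ref{detailsapp}.
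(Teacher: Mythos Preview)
Your argument for part~(a) is essentially the paper's, only made more explicit: the paper simply observes that Lemma~\ref{denselem} gives a Zariski--open set of distinguished vector fields inside $Y$, and that conjugation by $GL_3(\C)$ sweeps this out to a set containing a Zariski--open subset of $\C^{18}$. Invoking Chevalley and R\"ohrl's Theorem~\ref{throhrl} as you do is a perfectly good way to justify that last step rigorously.

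For part~(b) your route differs from the paper's. You propose a Gr\"obner basis or ideal--quotient computation over $\C(\gamma_{ij})$, followed by a count via Lemma~\ref{l2}(c). The paper instead performs a resultant elimination: after substituting $x_1=-B_2/B_1$ one obtains two equations in $x_2,x_3$; their resultant in $x_2$ is a degree--$12$ polynomial $R(x_3)$ with an explicit factorization $x_3^5(x_3-1)\,b_{12}^2\,(\text{linear})^2\,\widetilde T_4(x_3)$. The key device is that the three known roots $\gamma_{13},\gamma_{23},\gamma_{33}$ of the quartic $\widetilde T_4$ already account for $v_1,v_2,v_3$, so the fourth root is read off from Vieta's formula as (minus) the sub--leading coefficient minus the three known roots---manifestly rational in the $\gamma_{ij}$. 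The same is done for $x_2$, and $x_1$ follows. Your ideal--quotient approach would reach the same conclusion and is conceptually cleaner, but the paper's Vieta trick makes the rationality of the seventh idempotent immediate without needing to argue that the residual scheme is a single reduced point.

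Your ``main obstacle'' paragraph is well taken: passing from a generic statement over $\C(\gamma_{ij})$ to one valid for \emph{every} parameter value with $\det A\neq 0$ requires controlling the denominators that appear. The paper does not spell out this passage either; it treats the symbolic computation as establishing the claim for distinguished vector fields and refers to the {\sc Maple} worksheet. So your identification of this as the delicate point is accurate, and neither your outline nor the paper's resolves it beyond appeal to the explicit computation.
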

If $v$ is any idempotent of $p$ then $2$ is an eigenvalue of the Jacobian $Dp(v)$, with eigenvector $v$. Denote the remaining ones by $\lambda_1$ and $\lambda_2$, noting that these lie in a degree two extension of the rational function field $\C\left((\gamma_{ij})_{i,j}\right)$, and explicit expressions for them can be determined using computer algebra. The eigenvalues for the linearization of the Poincar\'e transform are then $-1, \lambda_1-1$ and $\lambda_2-1$, according to Lemma \ref{l2}. With this in hand, one can show that quadratic vector fields with property $E$ are indeed generic.
\begin{proposition}\label{genprop1}
\begin{enumerate}[(a)]
\item Whenever the $\gamma_{ij}$ are algebraically independent over the rational numbers $\mathbb Q$ then the distinguished homogeneous quadratic vector field constructed with these parameters satisfies property E.
\item The homogeneous quadratic vector fields \eqref{quadcoeff} which satisfy property E correspond to the complement of a Lebesgue measure zero subset of parameter space.
\end{enumerate}
\end{proposition}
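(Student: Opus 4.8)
The plan is to reduce both statements to genericity conditions on the parameters $\gamma_{ij}$ and then invoke Lemma \ref{denselem} together with Proposition \ref{denseprop}. For part (a), I would argue as follows. Fix a distinguished homogeneous quadratic vector field $p$ built from parameters $\gamma_{ij}$ that are algebraically independent over $\Q$. By Proposition \ref{denseprop}(b) there are exactly seven idempotents; six of them are $e_1,e_2,e_3$ and $v_1,v_2,v_3$, whose coordinates are polynomial (resp.\ rational) in the $\gamma_{ij}$, and the seventh has coordinates rational in the $\gamma_{ij}$ as well. For each idempotent $v$ the Jacobian $Dp(v)$ has $2$ as an eigenvalue with eigenvector $v$; the other two eigenvalues $\lambda_1,\lambda_2$ are the roots of a quadratic whose coefficients are rational functions of the $\gamma_{ij}$, hence they lie in a degree-two extension of $\C((\gamma_{ij}))$ and are themselves algebraic over $\Q(\gamma_{ij})$. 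By Lemma \ref{l2}(b) the eigenvalues of the linearization of $f_v^*$ at $0$ are $-1,\lambda_1-1,\lambda_2-1$. To establish property E I must check that at every one of the seven stationary points at infinity, either condition 1 or condition 2 of Lemma \ref{lconditions} holds for the triple $(-1,\lambda_1-1,\lambda_2-1)$.

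The key point is that condition 1 \emph{fails} precisely when there is a nontrivial $\Q$-linear relation among $-1,\lambda_1-1,\lambda_2-1$, i.e.\ a relation $a+b\lambda_1+c\lambda_2=0$ with $(a,b,c)\in\Q^3\setminus\{0\}$; and if such a relation holds with, say, $c=0$ and $b\neq 0$ (or symmetric cases) one still only has dimension drop to $1$, in which case condition 2 requires in addition a \emph{positive} integer relation $m_1(-1)+m_2(\lambda_1-1)+m_3(\lambda_2-1)=0$. The strategy is: for each of the seven idempotents, and for each candidate bad relation, the relation forces a polynomial equation with rational (indeed integer) coefficients among the $\gamma_{ij}$, because $\lambda_1,\lambda_2$ are algebraic over $\Q(\gamma_{ij})$ and satisfy an explicit quadratic. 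Since the $\gamma_{ij}$ are algebraically independent over $\Q$, such an equation can hold only if it is the zero polynomial. So I must verify, by a finite computer-algebra computation, that none of these relation polynomials vanishes identically — equivalently, that for each idempotent the pair $(\lambda_1,\lambda_2)$ is ``$\Q$-generic'' as a function of the $\gamma_{ij}$. Concretely it suffices to exhibit one numerical specialization $(\widehat\gamma_{ij})$ (e.g.\ the one used in Lemma \ref{denselem}) at which, at each of the seven idempotents, the eigenvalue triple $(-1,\lambda_1-1,\lambda_2-1)$ is $\Q$-linearly independent; that single witness shows the generic relation polynomials are nonzero, hence vanish only on a proper subvariety, which the algebraically independent $\gamma_{ij}$ avoid. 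This gives property E for $p$, and since E only constrains the top-degree term, adding arbitrary lower-degree terms preserves it.

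For part (b) I would pass from the distinguished subspace $Y$ to all of coefficient space $\C^{18}$. By Proposition \ref{denseprop}(a), the $GL_3(\C)$-orbit of the distinguished vector fields contains a Zariski-open subset $U$ of the space of homogeneous quadratic maps; property E is invariant under the coordinate changes $p\mapsto T^{-1}\circ p\circ T$, so it is enough to show E holds off a measure-zero subset of $U$. Within $Y$, Lemma \ref{denselem} says the parametrization $\Gamma:\C^9\to Y$ is dominant, so pulling back, it suffices to show property E fails only on a measure-zero (indeed proper Zariski-closed) subset of the $\gamma$-parameter space $\C^9$. By the analysis in part (a), for each of the seven idempotents and each finite list of candidate bad $\Q$-relations, failure of E at that point imposes a nontrivial polynomial condition on $(\gamma_{ij})$ (nontrivial by the witness computation), and there are only countably many such relations but — crucially — only finitely many need be considered: once $(-1,\lambda_1-1,\lambda_2-1)$ spans a $\Q$-subspace of dimension $\leq 1$, condition 2's integer relation is unique up to scaling, so the genuinely dangerous locus is the union of finitely many proper subvarieties, which has Lebesgue measure zero in $\C^9$. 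Transporting this along $\Gamma$ and along the $GL_3$-action, the set of quadratic vector fields failing E is contained in a measure-zero set, proving (b).

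The main obstacle I anticipate is purely computational: one must actually produce the explicit quadratic satisfied by $(\lambda_1,\lambda_2)$ in terms of the $\gamma_{ij}$ at each of the seven idempotents — in particular handling the seventh idempotent, whose coordinates are only implicitly rational in the $\gamma_{ij}$ — and then verify at a concrete numerical specialization that all seven eigenvalue triples are $\Q$-linearly independent. This is a finite {\sc Maple} computation, but checking $\Q$-linear independence of specific algebraic numbers (the $\lambda_i-1$ live in quadratic extensions of $\Q$) requires care: one works with exact radical expressions and confirms, for each of the finitely many relation types, that the resulting algebraic identity is violated. Once that single witness is in hand, all the genericity statements follow from the algebraic-independence hypothesis in (a) and from standard facts about proper subvarieties having measure zero in (b).
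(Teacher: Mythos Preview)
Your approach to part (a) is essentially the paper's: express the eigenvalues at each of the seven idempotents as algebraic functions of the $\gamma_{ij}$, and verify $\Q$-linear independence by exhibiting a specialization where it holds. (The paper specializes six parameters to rational values and leaves three algebraically independent, checking the resulting symbolic eigenvalues by inspection; your fully numerical witness works just as well in principle.)

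For part (b), however, you take a detour that introduces a genuine error. You attempt to describe the locus where property~E fails as a union of \emph{finitely many} proper subvarieties of $\C^9$, and your justification---that ``once $(-1,\lambda_1-1,\lambda_2-1)$ spans a $\Q$-subspace of dimension $\leq 1$, condition~2's integer relation is unique up to scaling''---does not do this. Failure of condition~1 already gives a \emph{countable} union of hypersurfaces (one for each nontrivial rational triple $(a,b,c)$ up to scaling), and condition~2 does not rescue you on all but finitely many of them; your reduction to dimension $\leq 1$ is not what failure of~E means. You could repair the argument by saying ``countably many'' (a countable union of proper subvarieties still has measure zero), but even that is unnecessary work.

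The paper's route is much shorter: part (a) already says that E holds whenever the $\gamma_{ij}$ are algebraically independent over $\Q$, and the set of algebraically \emph{dependent} tuples in $\C^9$ has Lebesgue measure zero. Push this through the locally invertible map $\Gamma$ and the $GL_3(\C)$-action exactly as you describe, and (b) follows immediately. You had all the ingredients for this one-line deduction but bypassed it.
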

\begin{proof}
For statement (a) one uses computer algebra, by inspecting the eigenvalues and verifying that they are linearly independent over the rationals. It is sufficient to do so for a specialization, assigning rational values to some of the parameters and leaving only three algebraically independent ones. (Moreover one may work directly with the eigenvalues of the Jacobians at idempotents due to Lemma \ref{l2}(b); computing the Poincar\'e transform is not necessary.) See subsection \ref{detailsapp} below.

To prove statement (b), recall that the set of parameters which are algebraically dependent over $\mathbb Q$ is of Lebesgue measure zero in $\mathbb C^9$, and this property transfers to the corresponding subset of $Y$ given by the image of the map $\Gamma$, which is generically locally invertible. Using coordinate transformations as the last step, the claim is proven.
\end{proof}
This statement is not yet quite satisfactory, since one knows that there is an open and dense subset of coefficient space so that \eqref{quadcoeff} admits no semi--invariant at all; see \.Zo{\l}\c{a}dek \cite{Zoladek}. Therefore we next ascertain the existence of vector fields which have property E and admit nontrivial semi--invariants. This is taken care of by the next result.
\begin{proposition}\label{genprop2}
In Definition \ref{disdef}, let $\gamma_{13}=\gamma_{21}=\gamma_{32}=0 $, with the remaining $\gamma_{ij}$ algebraically independent over the rational numbers $\mathbb Q$. Then $x_i,\, 1\leq i\leq 3$, is a semi--invariant of the distinguished vector field,  and this vector field satisfies property E.
\end{proposition}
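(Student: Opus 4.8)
The plan is to make the construction of Definition \ref{disdef} completely explicit under the given stipulation, after which the semi--invariant claim is immediate and property E reduces to inspecting eigenvalues, with a computer algebra check reserved for a single idempotent.

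First I would record that the constraint $\gamma_{13}=\gamma_{21}=\gamma_{32}=0$ turns the matrix $A$ of Definition \ref{disdef} into the diagonal matrix with entries $\gamma_{11}\gamma_{12},\ \gamma_{22}\gamma_{23},\ \gamma_{33}\gamma_{31}$, which is invertible when the remaining $\gamma_{ij}$ are algebraically independent; hence $p$ is a well--defined distinguished vector field of the form \eqref{sysquadr}, and by Proposition \ref{denseprop}(b) it has exactly seven idempotents. Because $A$ is diagonal the bilinear map $\widehat p$ can be solved for directly: $v_1=\gamma_{11}e_1+\gamma_{12}e_2$ determines $\widehat p(e_1,e_2)$, $v_2=\gamma_{22}e_2+\gamma_{23}e_3$ determines $\widehat p(e_2,e_3)$, and $v_3=\gamma_{31}e_1+\gamma_{33}e_3$ determines $\widehat p(e_1,e_3)$, and one reads off $\theta_2=\theta_6=\theta_7=0$ together with $\theta_1=(1-\gamma_{11})/\gamma_{12}$, $\theta_4=(1-\gamma_{12})/\gamma_{11}$ and analogous expressions for $\theta_3,\theta_5,\theta_8,\theta_9$. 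Since $\theta_2=\theta_6=\theta_7=0$ makes $p_i$ divisible by $x_i$ in \eqref{sysquadr}, each $x_i$ is a semi--invariant of $p$. Conceptually this is forced: $v_1$ is an idempotent lying in the plane $\{x_3=0\}$ (as $\gamma_{13}=0$) and distinct from $e_1,e_2$ (as $\gamma_{11},\gamma_{12}\neq 0$), whereas if $\{x_3=0\}$ were not invariant for $\dot x=p(x)$ then \eqref{sysquadr} shows that its only idempotents would be $e_1$ and $e_2$; symmetrically for $\{x_1=0\}$ and $\{x_2=0\}$.

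For property E I would follow the scheme of the proof of Proposition \ref{genprop1}. By Proposition \ref{denseprop}(b) and Lemma \ref{l2}(c) the seven idempotents exhaust the $(2^3-1)/(2-1)=7$ lines $\mathbb Cv$ with $p(v)\in\mathbb Cv$; in particular $p$ has no nilpotent and the idempotents are precisely the stationary points at infinity. At an idempotent $v$ the linearization of $p_v^*$ at $0$ has, by Lemma \ref{l2}(b), eigenvalues $-1,\beta_2-1,\beta_3-1$, where $2,\beta_2,\beta_3$ are the eigenvalues of $Dp(v)$. At $e_1,e_2,e_3$ the Jacobian $Dp$ is block--triangular, so $\beta_2,\beta_3$ are a pair among the $\theta_k$ (for instance $\theta_4,\theta_9$ at $e_1$); at $v_1,v_2,v_3$ a short computation using the relations $\theta_1\gamma_{12}=1-\gamma_{11}$ etc.\ yields $\beta_2,\beta_3$ as $\gamma_{11}+\gamma_{12}$ and $\theta_8\gamma_{12}+\theta_9\gamma_{11}$ at $v_1$, with analogous expressions at $v_2$ and $v_3$. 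In each of these six cases the two resulting rational functions of the $\gamma_{ij}$ involve blocks of variables that are disjoint (or at least carry an extra free variable on one side), so they are algebraically --- in particular $\mathbb Q$--linearly --- independent together with $1$, and condition 1 of Lemma \ref{lconditions} holds. The seventh idempotent is the unique solution of the generically nonsingular linear system expressing $p(v)=v$ with all coordinates nonzero, so its coordinates are rational in the $\gamma_{ij}$ and its eigenvalues $\beta_2,\beta_3$ are algebraic of degree at most two over $\mathbb Q(\gamma_{ij})$; that condition 1 holds there as well I would verify by a computer algebra computation exactly as in Proposition \ref{genprop1}, for which it suffices to specialize to three algebraically independent parameters, and one may work with $Dp$ at the idempotents without forming Poincar\'e transforms.

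The main obstacle is this seventh idempotent: unlike the other six its eigenvalues are not visible from a triangular Jacobian, and checking the $\mathbb Q$--linear independence of $-1,\beta_2-1,\beta_3-1$ requires working explicitly with the quadratic minimal polynomial of $\beta_2,\beta_3$ over $\mathbb Q(\gamma_{ij})$. A secondary point requiring some care is the algebraic independence claim in the six easy cases, where one argues that a putative polynomial relation, read as a polynomial in a variable occurring in only one of the two expressions, must be trivial.
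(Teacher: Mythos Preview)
Your proposal is correct and follows the same overall strategy as the paper---compute the eigenvalues of $Dp$ at each of the seven idempotents and verify that condition 1 of Lemma \ref{lconditions} holds for the corresponding Poincar\'e transforms---but you carry out much more of the argument by hand than the paper does. The paper's own proof consists of the single sentence ``All claims are again proven by inspection of computer algebra calculations; see {\sc Maple} worksheet {\tt TestPropertyE}'', together with the remark in subsection \ref{detailsapp} that proving Proposition \ref{genprop2} already suffices for Proposition \ref{genprop1}(a) via specialization.

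What your version adds is worth noting. First, you make explicit why the constraint $\gamma_{13}=\gamma_{21}=\gamma_{32}=0$ forces $\theta_2=\theta_6=\theta_7=0$ and hence the semi-invariance of each $x_i$; the paper leaves this entirely to the worksheet. Second, for the six ``easy'' idempotents $e_i$ and $v_i$ you write down the nontrivial eigenvalues of $Dp$ in closed form (your formulas $\theta_4,\theta_9$ at $e_1$ and $\gamma_{11}+\gamma_{12}$, $\theta_8\gamma_{12}+\theta_9\gamma_{11}$ at $v_1$ match Example \ref{algex} exactly) and dispose of the $\mathbb Q$-linear independence by a clean transcendence argument, whereas the paper relegates all seven cases to machine inspection. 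Third, your observation that under $\theta_2=\theta_6=\theta_7=0$ the condition $p(v)=v$ with all $v_i\neq 0$ reduces to the \emph{linear} system $v_i+\theta_{*}v_j+\theta_{*}v_k=1$ is a genuine simplification over the paper's general treatment of the seventh idempotent via resultants (subsection \ref{detailsapp}), and it immediately yields rationality of the coordinates. The residual appeal to computer algebra for the seventh idempotent's eigenvalues is unavoidable and matches exactly what the paper does; your remark that one may specialize further before checking, and that one need not form the Poincar\'e transform explicitly thanks to Lemma \ref{l2}(b), is the same reduction used in the proof of Proposition \ref{genprop1}(a).
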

\begin{proof}
All claims are again proven by inspection of computer algebra calculations; see subsection \ref{detailsapp}.
\end{proof}
Finally, we exhibit an example which shows the existence of distinguished quadratic vector fields with algebraic coefficients $\gamma_{ij}$.
\begin{example}\label{algex}{\em
With the algebraic coefficients
$$
  \gamma_{{11}}=\sqrt {2},\ \gamma_{{12}}=\sqrt {3},\ \gamma_{{13}}
=0,\ \gamma_{{21}}=0,\ \gamma_{{22}}=\sqrt {3},\ \gamma_{{23}}=\sqrt {5},
\ \gamma_{{31}}=\sqrt {2},\ \gamma_{{32}}=0,\ \gamma_{{33}}=\sqrt {5},
$$
the distinguished system has components
\begin{equation}\label{ex1}
\begin{array}{cl}
p_1(x)&={{ x_1}}^{2}-\dfrac{ \left( 10\,\sqrt {2}\sqrt {3
}-10\,\sqrt {3} \right)}{30} { x_1}\,{ x_2}\,-\,\dfrac{ \left( 6\,\sqrt {2
}\sqrt {5}-6\,\sqrt {5} \right)}{30}{ x_1}\,{ x_3},\\

p_2(x)&={{ x_2}}^{2}-\dfrac{ \left( 15\,\sqrt {2}\sqrt {3
}-15\,\sqrt {2} \right)}{30}{ x_1}\,{ x_2} -\dfrac{\, \left( 6\,\sqrt {3
}\sqrt {5}-6\,\sqrt {5} \right)}{30}{ x_2}\,{ x_3},\\

p_3(x)&={{ x_3}}^{2}-\dfrac{\left( \sqrt {5}-1 \right) \sqrt {2}}{2}{ x_3}\,{ x_1}-
\dfrac{ \left( \sqrt {5}-1 \right) \sqrt {3}}{3}{ x_3}\,{ x_2}.
\end{array}
\end{equation}
Note that this system admits the invariant surfaces given by $x_1=0$, $x_2=0$,  resp. $x_3=0$. We will show in subsection \ref{detailsapp} that property E is satisfied by exhibiting the eigenvalues of the Jacobians for all idempotents. To verify linear independence of these eigenvalues over $\mathbb Q$ by inspection, recall that $\sqrt{2},\,\sqrt{3}$ and $\sqrt{5}$ generate a field extension of degree $8$ over $\mathbb Q$.}
\end{example}
\section{Appendix}
\subsection{A proof of Lemma \ref{polyloc}}\label{polylocproof}
We give here an elementary proof, using only basic properties of polynomial and power series rings.
\begin{enumerate}[1.]
\item Let $m>0$ and
\[
\phi(x_1,\ldots,x_n)= x_n^m+\alpha_1(x_1,\ldots,x_{n-1})x_n^{m-1}+\cdots +\alpha_m(x_1,\ldots,x_{n-1})\in\C[x_1,\ldots,x_n]
\]
 be a polynomial with $\phi(0,\ldots,0)=0$. Then no prime factor of $\phi$ in the formal power series ring $\C[[x_1,\ldots,x_n]]$ lies in $\C[[x_1,\ldots,x_{n-1}]]$.

Indeed, a factorization $\phi(x_1,\ldots,x_n)=\sigma(x_1,\ldots,x_n)\cdot \tau(x_1,\ldots,x_{n-1})$ with non-invertible $\tau$, hence $\tau(0,\ldots,0)=0$, would yield the contradiction $\phi(0,\ldots,0,x_n)=0$.
\item We now show: If $\phi$ and $\gamma$ are relatively prime polynomials in $n$ variables, with $\phi(0)=\gamma(0)=0$, then $\phi$ and $\gamma$ remain relatively prime in the power series ring $\C[[x_1,\ldots,x_n]]$. (This implies statement (b) of Lemma \ref{polyloc}.)

To show this we may assume that $\phi$ is as above and likewise
\[
\gamma=x_n^k+\beta_1(x_1,\ldots,x_{n-1})x_n^{k-1}+\cdots +\beta_k(x_1,\ldots,x_{n-1})
\]
with some $k>0$. (One may achieve such a form by applying a linear transformation and multiplication by nonzero constants, which does not affect the statement.) Since $\phi$ and $\gamma$ are relatively prime polynomials, their resultant $\rho$ with respect to $x_n$ lies in $\C[x_1,\ldots,x_{n-1}]$, and there exist polynomials $\mu$ and $\nu$ such that
\[
\mu\cdot\phi+\nu\cdot\gamma=\rho;
\]
see Cox et al. \cite{CLOS}, Chapter 3, \S 6, Proposition 1. Assuming that a common prime factor of $\phi$ and $\gamma$ exists in $\C[[x_1,\ldots,x_{n}]]$, this factor also divides $\rho$ and therefore lies in $\C[[x_1,\ldots,x_{n-1}]]$; a contradiction.
\item To prove statement (a) of Lemma \ref{polyloc}, assume that $\phi$ has the form above and is irreducible, then apply the previous argument to $\phi$ and $\partial\phi/\partial x_n$ to see that $\phi$ cannot admit multiple prime factors in $\C[[x_1,\ldots,x_{n}]]$.
\end{enumerate}
\subsection{A result of H.~R\"ohrl.}\label{rohrlsec}
In \cite{Rohrlidpo}, R\"ohrl stated a theorem on criteria for (in his terminology) $m$-ary algebras to admit a basis of idempotents. While the theorem as stated is incorrect, the weaker statement that generically an $m$-ary algebra admits a basis of idempotents is correct, and R\"ohrl's arguments can be modified to prove it. Here we give a proof that in part takes  a different approach.

Consider a homogeneous polynomial map
\begin{equation}
p:\,\mathbb C^n\to\mathbb C^n; \quad p(x)=\left(\sum_{i_1,\ldots,i_n}\eta_{i_1,\ldots,i_n,k}x_1^{i_1}\cdots x_n^{i_n}\right)_{1\leq k\leq n}
\end{equation}
of degree $m>1$.  As noted above, an {\em idempotent} of $p$ is a $v\in\mathbb C^n$ such that $p(v)=v\not=0$. Note that any $w\not=0$ that satisfies $p(w)=\beta\cdot w$ for some $\beta\not=0$ is a scalar multiple of an idempotent. (If $w\not=0$ and $p(w)=0$ then $w$ is a {\em nilpotent}.)

In order to state the result properly, we need further terminology.
Denoting the collection of structure coefficients $\eta_{i_1,\ldots,i_n,k}$ by $y\in\mathbb C^N$, we write
\begin{equation}
p(x)=Q(y,x).
\end{equation}

\begin{theorem}\label{throhrl}
The set of all $y$ such that $\mathbb C^n$ admits a basis of idempotents of $p=Q(y,\cdot)$ contains a Zariski--open (and dense) subset of parameter space $\mathbb C^N$.
\end{theorem}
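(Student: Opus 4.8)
The plan is to show that the set of parameters $y \in \mathbb{C}^N$ for which $p = Q(y,\cdot)$ admits a basis of idempotents is a constructible set containing a nonempty Zariski-open subset; since any nonempty Zariski-open subset of $\mathbb{C}^N$ is dense, density will follow automatically. The strategy has two parts: first, exhibit a \emph{single} parameter value $y_0$ for which $p$ has a basis of idempotents with well-behaved (nondegenerate) structure, and second, show that the ``nondegeneracy'' is an open condition, so it persists in a neighborhood.

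First I would recall that by Lemma \ref{l2}(c), applied to the homogeneous map $p$, the lines $\mathbb{C}v$ with $p(v) \in \mathbb{C}v$ are (counted with multiplicity) exactly $\frac{m^n-1}{m-1}$ in number whenever this number is finite, and among these the idempotents are the lines on which the scalar is nonzero (the nilpotent directions carry scalar zero). The generic absence of nilpotents (R\"ohrl \cite{Rohrl}, Theorem 1) is itself an open condition — a nilpotent exists iff the projective variety $\{p(x) - \xi x = 0\}$ meets $\{\xi = 0\}$, and absence of such intersection is Zariski-open. So on a nonempty open set $U_1 \subseteq \mathbb{C}^N$ there are no nilpotents and exactly $\frac{m^n-1}{m-1} \geq n$ idempotent lines, each simple. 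Next I would set up the incidence variety
\[
Z = \{(y, v_1,\ldots,v_n) : p(v_i) = v_i,\ v_i \neq 0,\ \det(v_1,\ldots,v_n) \neq 0\} \subseteq \mathbb{C}^N \times (\mathbb{C}^n)^n
\]
and consider its projection $\pi$ to $\mathbb{C}^N$; the image $\pi(Z)$ is exactly the set we want, and by Chevalley's theorem it is constructible. Hence it contains a nonempty Zariski-open subset of its Zariski closure, and the whole argument reduces to showing that this closure is all of $\mathbb{C}^N$, equivalently that $\pi(Z)$ is \emph{somewhere dense} — or, most concretely, that there is one $y_0$ with $(y_0, v_1,\ldots,v_n) \in Z$ at which the relevant Jacobian has full rank, so that $\pi$ is dominant near that point.

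The cleanest way to produce such a base point is to reverse the construction, as done for $n=3$ in Definition \ref{disdef} and Lemma \ref{denselem}: fix the standard basis $e_1,\ldots,e_n$ as prescribed idempotents and solve for the structure coefficients. The map $p$ corresponds to the symmetric $m$-linear form $\widehat p$, and imposing $\widehat p(e_i,\ldots,e_i) = e_i$ for each $i$ pins down the ``diagonal'' coefficients while leaving the off-diagonal ones free; this already gives a whole affine subspace $Y \subseteq \mathbb{C}^N$ of maps with $e_1,\ldots,e_n$ as idempotents. One then picks generic off-diagonal data so that, in addition, the remaining idempotents are simple and their directions, together with $e_1,\ldots,e_n$, are sufficiently plentiful — and then verifies by an explicit Jacobian computation (the higher-dimensional analogue of Lemma \ref{denselem}, computed symbolically) that the map $y \mapsto (\text{idempotent data})$ is a submersion at this point. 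I expect the main obstacle to be exactly this last verification: unlike the $n=3$ case where one can run \textsc{Maple} on an $18$-dimensional space, for general $n$ one needs either a structural argument (e.g., that the off-diagonal coefficients of $\widehat p$ can be recovered linearly from the values $\widehat p$ takes on a spanning set of the extra idempotent directions, provided an appropriate determinant — the analogue of $\det A$ in Definition \ref{disdef}(iii) — is nonzero) or an inductive/deformation argument reducing to a diagonalizable-linear base case where the idempotent structure is transparent. Once any such $y_0$ is found, openness of ``no nilpotent, all idempotents simple, some $n$ of them independent'' finishes the proof: on the intersection of $U_1$ with the locus where the Jacobian of $\pi$ stays full rank, $p$ has a basis of idempotents, and this is a nonempty Zariski-open — hence dense — subset of $\mathbb{C}^N$.
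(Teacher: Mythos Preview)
Your architecture is essentially the paper's: set up an incidence variety, invoke Chevalley/constructibility of the image, and reduce the whole question to exhibiting a single nondegenerate base point whose existence forces the projection to be dominant. That part is fine and matches Part~B of the paper's proof almost verbatim (the paper uses the closely related statement that the image of a morphism contains a Zariski-open subset of its closure, together with the fact that a norm-open subset of $\mathbb C^N$ is Zariski-dense).

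The gap is precisely where you flag it: you never actually produce the base point. You propose to generalize the $n=3$ construction of Definition~\ref{disdef} and Lemma~\ref{denselem}, with its auxiliary parameters $\gamma_{ij}$ and the invertibility of a structured matrix $A$, and you acknowledge that carrying this out for general $n$ and $m$ would require either a nontrivial structural argument or an inductive scheme you do not supply. This is overkill, and it leaves the proof incomplete. The paper's Part~A simply takes
\[
p^*(x)=\begin{pmatrix}x_1^m\\ \vdots\\ x_n^m\end{pmatrix},
\]
for which $e_1,\ldots,e_n$ are idempotents and $Dp^*(e_i)-I$ is diagonal with entries $(-1,\ldots,-1,m-1,-1,\ldots,-1)$, hence invertible for every $m>1$. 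The block-diagonal Jacobian of $(y,v_1,\ldots,v_n)\mapsto(Q(y,v_i)-v_i)_i$ with respect to $(v_1,\ldots,v_n)$ is then invertible at $(y^*,e_1,\ldots,e_n)$, so by the Implicit Function Theorem the incidence variety is locally a graph over parameter space and the projection is a local isomorphism there; dominance follows immediately. In short, the missing idea is not a delicate Jacobian computation but the observation that the ``diagonal'' map $p^*$ already does the job for all $n$ and all $m$; the elaborate $\gamma_{ij}$ machinery of Section~\ref{dimthreesec} was built for a different purpose (verifying property~E) and is unnecessary here.
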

\begin{proof}
We set $F(y,x):=Q(y,x)-x$.  Then, for a fixed parameter $y_0$, an idempotent of $Q(y_0,\,\cdot)$ is a nonzero solution of the equation
\[
Q(y_0,x)-x=0,\text{  equivalently  } F(y_0,x)=0.
\]
\begin{enumerate}[A.]
\item We first show the existence of a norm--open set in parameter space that satisfies the desired property.
\begin{enumerate}[1.]
\item We use the Implicit Function Theorem: Assume that $x_0$ is a nonzero solution of $F(y_0,x)=0$, and that the partial derivative
\[
D_xF(y_0,x_0)=D_xQ(y_0,x_0)-I
\]
is invertible. Then there exists an analytic function $h$ from some neighborhood $U$ of $y_0$ to $\mathbb C^n$ such that
\[
h(y_0)=x_0\text{   and   }Q(y,h(y))-h(y)=0,\text{  for all   }y\in U.
\]
Thus, for any small change of the structure coefficients there is a nearby idempotent. (The condition on the derivative, incidentally, means that the idempotent $x_0$ is of multiplicity one.)
\item The same argument can be applied to a basis of idempotents: Assume there is a parameter $y_0$ such that the equation $F(y_0,x)=0$ admits $n$ linearly independent solutions $x^{(1)}_0,\ldots, x^{(n)}_0$, and that the partial derivative with respect to $x$ at $(y,x_0^{(i)})$ is invertible. Then consider the map
\[
G: \mathbb C^N\times\left(\mathbb C^n\right)^n\to \left(\mathbb C^n\right)^n,\quad(y,x^{(1)},\ldots,x^{(n)})\mapsto(F(y,x^{(1)}),\ldots,F(y,x^{(n)})).
\]
Here we have a solution $(y_0,x_0^{(1)},\ldots,x_0^{(n)})$ of $G=0$ and the matrix representing the partial derivative with respect to $(x^{(1)},\ldots,x^{(n)})$ is block diagonal with invertible blocks, hence invertible. Therefore there exist a neighborhood $U$ of $y_0$ and analytic maps
\[
h^{(i)}:\,U\to\mathbb C^n\text{   with   }h^{(i)}(y_0)=x^{(i)}_0\text{  and  }Q(y,h^{(i)}(y))-h^{(i)}(y)=0
\]
for $ y\in U$, $1\leq i\leq n$. Moreover
\[
\det\left(h^{(1)}(y),\ldots,h^{(n)}(y)\right)
\]
does not vanish at $y_0$, hence in some open neighborhood $\widetilde U\subseteq U$ of $y_0$. In other words, for all $y\in\widetilde U$ the corresponding $p=Q(y,\cdot)$ admits a basis of idempotents.
\item To finish the argument, one needs to exhibit one $p$ (thus some $y_0$) for which a basis of idempotents exists and the respective partial derivatives are invertible: Take
\[
p^*(x)=\begin{pmatrix}x_1^m\\ \vdots\\x_n^m\end{pmatrix}=:Q(y^*,x),
\]
and the standard basis $e_1,\ldots,e_n$.
\end{enumerate}

\item  Now we show the existence of a Zariski--open set in parameter space which has the desired properties.
For $1\leq k<\ell\leq n$  define
\[
\Delta_{k,\ell}(u,v):=u_kv_\ell-u_\ell v_k;\quad u,v\in\mathbb C^n.
\]
Consider the morphism
\[
\begin{array}{rl}
H:&\mathbb C^N\times\left(\mathbb C^n\right)^n\to\left(\mathbb C^{n(n-1)/2}\right)^n\times \mathbb C,\\
  &\left(y,x^{(1)},\ldots,x^{(n)}\right)\mapsto\begin{pmatrix} \Delta_{k,\ell}(Q(y,x^{(i)}),x^{(i)})_{i,k,\ell}\\ \det(x^{(1)},\ldots,x^{(n)})-1\end{pmatrix}.
\end{array}
\]
Then $H=0$ defines an algebraic subvariety $Z$ of $\mathbb C^N\times\left(\mathbb C^n\right)^n$, which is nonempty by the first part of the proof. Any zero of $H$ corresponds to some homogeneous $p$ which admits a basis $v_1,\ldots, v_n$ of $\mathbb C^n$ with $p(v_i)\in \mathbb C v_i$.

The image $\widetilde Z$ of $Z$ under the projection
\[
\pi: \mathbb C^N\times\left(\mathbb C^n\right)^n\to\mathbb C^N
\]
onto the first component contains a Zariski--open subset of its closure, by a standard theorem on morphisms of algebraic varieties (see e.g. Shafarevich \cite{Sha}, Chapter 1, Theorem 1.14). But due to the first part of the proof, $\widetilde Z$ also contains a norm--open set $U$, whose Zariski closure is all of $\mathbb C^N$. Therefore $\widetilde Z$ contains a nonempty Zariski--open subset of $\mathbb C^N$.

Finally, those $p$ possessing a nilpotent correspond to a proper Zariski--closed subset of parameter space, thanks to a resultant argument; see e.g. R\"ohrl \cite{Rohrl}, Theorem 1. Taking the complement of this closed subset, we find that all $p(v_i)\in\mathbb C v_i\setminus\{0\}$, whence some multiple of $v_i$ is an idempotent, and the proof is finished.
\end{enumerate}
\end{proof}
\subsection{Construction of quadratic vector fields in dimension three: Some details.}\label{detailsapp}
Here we describe some details of the construction of distinguished quadratic vector fields, and outline some arguments. We also supply, in the additional material, some {\sc Maple} worksheets containing the calculations.
\subsubsection{The explicit expression for system \eqref{sysquadr}}
The quadratic homogeneous vector field \eqref{sysquadr} having the prescribed six idempotents $e_1,e_2,e_3, v_1,v_2,v_3$ with $v_i=\gamma_{i1}e_1+\gamma_{i2}e_2+\gamma_{i3}e_3$ and $i=1,2,3$ is given by
$$
\begin{array}{rl}
p_1=&{{ x_1}}^{2}+\left[ \,{ x_1}\,{ x_2}\, \left( -{{ \gamma_{11}}}^{2}
{ \gamma_{21}}\,{ \gamma_{23}}\,{ \gamma_{32}}\,{ \gamma_{33}}
+{{ \gamma_{11}}}^{2}{ \gamma_{22}}\,{ \gamma_{23}}\,{ \gamma_{31}}\,{ \gamma_{33}}+{ \gamma_{11}}\,{ \gamma_{13}}\,{{ \gamma_{21}}}^{2}{
 \gamma_{32}}\,{ \gamma_{33}}-{ \gamma_{11}}\,{ \gamma_{13}}\,{ \gamma_{22}}\,{ \gamma_{23}}\,{{ \gamma_{31}}}^{2}\right.\right.\\

&-{ \gamma_{12}}\,{ \gamma_{13}}\,{{ \gamma_{21}}}^{2}{ \gamma_{31}}\,{ \gamma_{33}}
+{ \gamma_{12}}\,{ \gamma_{13}}\,{ \gamma_{21}}\,{ \gamma_{23}}\,{{ \gamma_{31}}}^{2}
-{ \gamma_{11}}\,{ \gamma_{13}}\,{ \gamma_{21}}\,{ \gamma_{32}}\,{ \gamma_{33}}
+{ \gamma_{11}}\,{ \gamma_{13}}\,{ \gamma_{22}}\,{ \gamma_{23}}\,{ \gamma_{31}}\\

&\left.+{ \gamma_{11}}\,{ \gamma_{21}}\,{ \gamma_{23}}\,{ \gamma_{32}}
\,{ \gamma_{33}}-{ \gamma_{11}}\,{ \gamma_{22}}\,{ \gamma_{23}}\,{ \gamma_{31}}\,{ \gamma_{33}}-{
 \gamma_{12}}\,{ \gamma_{13}}\,{ \gamma_{21}}\,{ \gamma_{23}}\,{ \gamma_{31}}+{ \gamma_{12}}\,{
\gamma_{13}}\,{ \gamma_{21}}\,{ \gamma_{31}}\,{ \gamma_{33}} \right)\\

& -\,{ x_2}\,{ x_3}\,{ \gamma_{11}}\,{ \gamma_{21}}\,{ \gamma_{31}}\,
  \left( { \gamma_{11}}\,{ \gamma_{22}}\,{
\gamma_{33}}-{ \gamma_{11}}\,{ \gamma_{23}}\,{ \gamma_{32}}-{ \gamma_{12}}\,{ \gamma_{21}}\,{ \gamma_{33}}+{
 \gamma_{12}}\,{ \gamma_{23}}\,{ \gamma_{31}}+{ \gamma_{13}}\,{ \gamma_{21}}\,{ \gamma_{32}}-
{ \gamma_{13}}\,{ \gamma_{22}}\,{ \gamma_{31}}\right.\\

&\left.-{ \gamma_{12}}\,{ \gamma_{23}}+{ \gamma_{12}}\,{ \gamma_{33}}
+{ \gamma_{13}}\,{ \gamma_{22}}-{ \gamma_{13}}\,{ \gamma_{32}}-{ \gamma_{22}}\,{ \gamma_{33}}+{ \gamma_{23}
}\,{ \gamma_{32}} \right) \,\\

&+\,{ x_1}\,{ x_3} \left( {{ \gamma_{11}}}^{2}{
 \gamma_{21}}\,{ \gamma_{22}}\,{ \gamma_{32}}\,{ \gamma_{33}}-{{ \gamma_{11}}}^{2}{ \gamma_{22}}\,{
 \gamma_{23}}\,{ \gamma_{31}}\,{ \gamma_{32}}-{ \gamma_{11}}\,{ \gamma_{12}}\,{{ \gamma_{21}}}^{2}{
 \gamma_{32}}\,{ \gamma_{33}}+{ \gamma_{11}}\,{ \gamma_{12}}\,{ \gamma_{22}}\,{ \gamma_{23}}\,{{
\gamma_{31}}}^{2}\right.\\

&+{ \gamma_{12}}\,{ \gamma_{13}}\,{{ \gamma_{21}}}^{2}{ \gamma_{31}}\,{ \gamma_{32}}-{
 \gamma_{12}}\,{ \gamma_{13}}\,{ \gamma_{21}}\,{ \gamma_{22}}\,{{ \gamma_{31}}}^{2}+{ \gamma_{11}}\,
{ \gamma_{12}}\,{ \gamma_{21}}\,{ \gamma_{32}}\,{ \gamma_{33}}-{ \gamma_{11}}\,{ \gamma_{12}}\,{
\gamma_{22}}\,{ \gamma_{23}}\,{ \gamma_{31}}\\

&\left.\left.-{ \gamma_{11}}\,{ \gamma_{21}}\,{ \gamma_{22}}\,{ \gamma_{32}}
\,{ \gamma_{33}}+{ \gamma_{11}}\,{ \gamma_{22}}\,{ \gamma_{23}}\,{ \gamma_{31}}\,{ \gamma_{32}}+{
 \gamma_{12}}\,{ \gamma_{13}}\,{ \gamma_{21}}\,{ \gamma_{22}}\,{ \gamma_{31}}-{ \gamma_{12}}\,{
\gamma_{13}}\,{ \gamma_{21}}\,{ \gamma_{31}}\,{ \gamma_{32}} \right)\right]/d
\end{array}
$$

$$
\begin{array}{rl}
p_2&={{ x_2}}^{2}+\left[ \,{ x_1}\,{ x_2}\, \left( { \gamma_{11}}\,{
\gamma_{13}}\,{{ \gamma_{22}}}^{2}{ \gamma_{32}}\,{ \gamma_{33}}-{ \gamma_{11}}\,{ \gamma_{13}}\,{
\gamma_{22}}\,{ \gamma_{23}}\,{{ \gamma_{32}}}^{2}-{{ \gamma_{12}}}^{2}{ \gamma_{21}}\,{ \gamma_{23}}\,
{ \gamma_{32}}\,{ \gamma_{33}}\right.\right.\\

&+{{ \gamma_{12}}}^{2}{ \gamma_{22}}\,{ \gamma_{23}}\,{ \gamma_{31}}\,{
 \gamma_{33}}+{ \gamma_{12}}\,{ \gamma_{13}}\,{ \gamma_{21}}\,{ \gamma_{23}}\,{{ \gamma_{32}}}^{2}-{
 \gamma_{12}}\,{ \gamma_{13}}\,{{ \gamma_{22}}}^{2}{ \gamma_{31}}\,{ \gamma_{33}}+{ \gamma_{11}}\,{
 \gamma_{13}}\,{ \gamma_{22}}\,{ \gamma_{23}}\,{ \gamma_{32}}\\

&-{ \gamma_{11}}\,{ \gamma_{13}}\,{
\gamma_{22}}\,{ \gamma_{32}}\,{ \gamma_{33}}-{ \gamma_{12}}\,{ \gamma_{13}}\,{ \gamma_{21}}\,{ \gamma_{23}}
\,{ \gamma_{32}}+{ \gamma_{12}}\,{ \gamma_{13}}\,{ \gamma_{22}}\,{ \gamma_{31}}\,{ \gamma_{33}}+{
 \gamma_{12}}\,{ \gamma_{21}}\,{ \gamma_{23}}\,{ \gamma_{32}}\,{ \gamma_{33}}\\

&\left.-{ \gamma_{12}}\,{
\gamma_{22}}\,{ \gamma_{23}}\,{ \gamma_{31}}\,{ \gamma_{33}} \right) \\

&+\,{ x_2}\,{ x_3}\,
 \left( -{ \gamma_{11}}\,{ \gamma_{12}}\,{ \gamma_{21}}\,{ \gamma_{23}}\,{{ \gamma_{32}}}^{2}+{
 \gamma_{11}}\,{ \gamma_{12}}\,{{ \gamma_{22}}}^{2}{ \gamma_{31}}\,{ \gamma_{33}}+{ \gamma_{11}}\,{
 \gamma_{13}}\,{ \gamma_{21}}\,{ \gamma_{22}}\,{{ \gamma_{32}}}^{2}-{ \gamma_{11}}\,{ \gamma_{13}}\,
{{ \gamma_{22}}}^{2}{ \gamma_{31}}\,{ \gamma_{32}}\right.\\

&-{{ \gamma_{12}}}^{2}{ \gamma_{21}}\,{ \gamma_{22}
}\,{ \gamma_{31}}\,{ \gamma_{33}}+{{ \gamma_{12}}}^{2}{ \gamma_{21}}\,{ \gamma_{23}}\,{ \gamma_{31}}
\,{ \gamma_{32}}+{ \gamma_{11}}\,{ \gamma_{12}}\,{ \gamma_{21}}\,{ \gamma_{23}}\,{ \gamma_{32}}-{
 \gamma_{11}}\,{ \gamma_{12}}\,{ \gamma_{22}}\,{ \gamma_{31}}\,{ \gamma_{33}}\\

&\left.
-{ \gamma_{11}}\,{
\gamma_{13}}\,{ \gamma_{21}}\,{ \gamma_{22}}\,{ \gamma_{32}}
+{ \gamma_{11}}\,{ \gamma_{13}}\,{ \gamma_{22}}
\,{ \gamma_{31}}\,{ \gamma_{32}}+{ \gamma_{12}}\,{ \gamma_{21}}\,{ \gamma_{22}}\,{ \gamma_{31}}\,{
 \gamma_{33}}-{ \gamma_{12}}\,{ \gamma_{21}}\,{ \gamma_{23}}\,{ \gamma_{31}}\,{ \gamma_{32}}
 \right)\\

 &-\,{ x_1}\,{ x_3}\,{ \gamma_{12}}\,{ \gamma_{22}}\,{ \gamma_{32}}\,
 \left( { \gamma_{11}}\,{ \gamma_{22}}\,{ \gamma_{33}}-{ \gamma_{11}}\,{ \gamma_{23}}\,{ \gamma_{32}
}-{ \gamma_{12}}\,{ \gamma_{21}}\,{ \gamma_{33}}+{ \gamma_{12}}\,{ \gamma_{23}}\,{ \gamma_{31}}\right.\\

&\left.\left.
+{
 \gamma_{13}}\,{ \gamma_{21}}\,{ \gamma_{32}}-{ \gamma_{13}}\,{ \gamma_{22}}\,{ \gamma_{31}}+{
\gamma_{11}}\,{ \gamma_{23}}-{ \gamma_{11}}\,{ \gamma_{33}}-{ \gamma_{13}}\,{ \gamma_{21}}+{ \gamma_{13}}\,{
 \gamma_{31}}+{ \gamma_{33}}\,{ \gamma_{21}}-{ \gamma_{23}}\,{ \gamma_{31}} \right)\right]/d
\end{array}
$$

$$
\begin{array}{rl}
p_3=&{x_3}^2- \left[{ x_1}\,{ x_2}{ \gamma_{13}}\,{ \gamma_{23}}\,{ \gamma_{33}}\, \left( { \gamma_{11}}\,{ \gamma_{22}}\,{
\gamma_{33}}-{ \gamma_{11}}\,{ \gamma_{23}}\,{ \gamma_{32}}-{ \gamma_{12}}\,{ \gamma_{21}}\,{ \gamma_{33}}+{
 \gamma_{12}}\,{ \gamma_{23}}\,{ \gamma_{31}}+{ \gamma_{13}}\,{ \gamma_{21}}\,{ \gamma_{32}}-\right.\right.\\

&\left.
{
\gamma_{13}}\,{ \gamma_{22}}\,{ \gamma_{31}}-{ \gamma_{11}}\,{ \gamma_{22}}+{ \gamma_{32}}\,{ \gamma_{11}}+{
 \gamma_{12}}\,{ \gamma_{21}}-{ \gamma_{31}}\,{ \gamma_{12}}-{ \gamma_{21}}\,{ \gamma_{32}}+{ \gamma_{31}
}\,{ \gamma_{22}} \right)\\

&+{ x_1}\,{ x_3} \left( { \gamma_{11}}\,{ \gamma_{12}}\,
{ \gamma_{22}}\,{ \gamma_{23}}\,{{ \gamma_{33}}}^{2}-{ \gamma_{11}}\,{ \gamma_{12}}\,{{ \gamma_{23}}
}^{2}{ \gamma_{32}}\,{ \gamma_{33}}-{ \gamma_{12}}\,{ \gamma_{13}}\,{ \gamma_{21}}\,{ \gamma_{22}}\,
{{ \gamma_{33}}}^{2}+{ \gamma_{12}}\,{ \gamma_{13}}\,{{ \gamma_{23}}}^{2}{ \gamma_{31}}\,{
\gamma_{32}}\right.\\

&+{{ \gamma_{13}}}^{2}{ \gamma_{21}}\,{ \gamma_{22}}\,{ \gamma_{32}}\,{ \gamma_{33}}-{{
\gamma_{13}}}^{2}{ \gamma_{22}}\,{ \gamma_{23}}\,{ \gamma_{31}}\,{ \gamma_{32}}-{ \gamma_{11}}\,{
\gamma_{12}}\,{ \gamma_{22}}\,{ \gamma_{23}}\,{ \gamma_{33}}+{ \gamma_{11}}\,{ \gamma_{12}}\,{ \gamma_{23}}
\,{ \gamma_{32}}\,{ \gamma_{33}}\\

&\left.+{ \gamma_{12}}\,{ \gamma_{13}}\,{ \gamma_{21}}\,{ \gamma_{22}}\,{
 \gamma_{33}}-{ \gamma_{12}}\,{ \gamma_{13}}\,{ \gamma_{23}}\,{ \gamma_{31}}\,{ \gamma_{32}}-{
\gamma_{13}}\,{ \gamma_{21}}\,{ \gamma_{22}}\,{ \gamma_{32}}\,{ \gamma_{33}}+{ \gamma_{13}}\,{ \gamma_{22}}
\,{ \gamma_{23}}\,{ \gamma_{31}}\,{ \gamma_{32}} \right) \\

&- { x_2}
\,{ x_3}\left( {
 \gamma_{11}}\,{ \gamma_{12}}\,{ \gamma_{21}}\,{ \gamma_{23}}\,{{ \gamma_{33}}}^{2}-{ \gamma_{11}}\,
{ \gamma_{12}}\,{{ \gamma_{23}}}^{2}{ \gamma_{31}}\,{ \gamma_{33}}-{ \gamma_{11}}\,{ \gamma_{13}}\,{
 \gamma_{21}}\,{ \gamma_{22}}\,{{ \gamma_{33}}}^{2}+{ \gamma_{11}}\,{ \gamma_{13}}\,{{ \gamma_{23}}}
^{2}{ \gamma_{31}}\,{ \gamma_{32}}\right.\\

&+{{ \gamma_{13}}}^{2}{ \gamma_{21}}\,{ \gamma_{22}}\,{ \gamma_{31}
}\,{ \gamma_{33}}
-{{ \gamma_{13}}}^{2}{ \gamma_{21}}\,{ \gamma_{23}}\,{ \gamma_{31}}\,{ \gamma_{32}}
-{ \gamma_{11}}\,{ \gamma_{12}}\,{ \gamma_{21}}\,{ \gamma_{23}}\,{ \gamma_{33}}+{ \gamma_{11}}\,{
 \gamma_{12}}\,{ \gamma_{23}}\,{ \gamma_{31}}\,{ \gamma_{33}}
\\

&\left.\left.+{ \gamma_{11}}\,{ \gamma_{13}}\,{
\gamma_{21}}\,{ \gamma_{22}}\,{ \gamma_{33}}
-{ \gamma_{11}}\,{ \gamma_{13}}\,{ \gamma_{23}}\,{ \gamma_{31}}
\,{ \gamma_{32}}-{ \gamma_{13}}\,{ \gamma_{21}}\,{ \gamma_{22}}\,{ \gamma_{31}}\,{ \gamma_{33}}+{
 \gamma_{13}}\,{ \gamma_{21}}\,{ \gamma_{23}}\,{ \gamma_{31}}\,{ \gamma_{32}} \right) \right]/d
\end{array}
$$
with
$$
\begin{array}{cc}
d=\det(A)=&{ \gamma_{11}}\,{ \gamma_{12}}\,{ \gamma_{21}}\,{ \gamma_{23}}\,{ \gamma_{32}}\,{ \gamma_{33}}
-{ \gamma_{11}}\,{ \gamma_{12}}\,{ \gamma_{22}}\,{ \gamma_{23}}\,{ \gamma_{31}}\,{ \gamma_{33}}
-{ \gamma_{11}}\,{ \gamma_{13}}\,{ \gamma_{21}}\,{ \gamma_{22}}\,{ \gamma_{32}}\,{ \gamma_{33}}\\
&+{ \gamma_{11}}\,{ \gamma_{13}}\,{ \gamma_{22}}\,{ \gamma_{23}}\,{ \gamma_{31}}\,{ \gamma_{32}}
+{ \gamma_{12}}\,{ \gamma_{13}}\,{ \gamma_{21}}\,{ \gamma_{22}}\,{ \gamma_{31}}\,{ \gamma_{33}}
-{ \gamma_{12}}\,{ \gamma_{13}}\,{ \gamma_{21}}\,{ \gamma_{23}}\,{ \gamma_{31}}\,{ \gamma_{32}}.\\
\end{array}
$$
See the {\sc Maple} worksheet {\tt ExplicitSystem.}
\subsubsection{Concerning the proof of Proposition \ref{denseprop}}
For the values
\[
\gamma_{11}=-1,\,\gamma_{12}=3,\,\gamma_{13}=2,\,\gamma_{21}=1,\,\gamma_{22}=1,\,\gamma_{23}=-2,\,\gamma_{31}=0,\,\gamma_{32}=1,\,\gamma_{33}=-3
\]
the Jacobian of $\Gamma$ is invertible; see {\sc Maple} worksheet {\tt JacobianInvertible}.
\subsubsection{Computing the seventh idempotent}
To calculate the coefficients of the seventh idempotent $v=s_1e_1+s_2e_2+s_3e_3$ in terms of the nine parameters $\gamma_{ij}$,
first rewrite the components of the vector field into the form
$$
\begin{array}{cl}
p_1&=x_1^2+A_1(x_2,x_3)x_1+A_2(x_2,x_3),\\
p_2&=B_1(x_2,x_3)x_1+B_2(x_2,x_3), \\
p_3&=C_1(x_2,x_3)x_1+C_2(x_2,x_3),\\
\end{array}
$$
with
$$
\begin{array}{ll}
A_1=a_{11}x_2+a_{12}x_3-1, &  A_2=a_{13}x_2x_3,\\
B_1=b_{11}x_2+b_{12}x_3, & B_2=x_2^2-x_2+b_{13}x_2x_3,\\
C_1=c_{11}x_2+c_{12}x_3, & C_2=x_3^2-x_3+c_{13}x_2x_3.
 \end{array}
$$
(This is a preliminary step to avoid huge expressions involving the $\gamma_{ij}$ in {\sc Maple} worksheets.)
The idempotents are the nonzero solutions of the equation $p(x)-x=0$ where $x=(x_1,x_2,x_3)$. So we obtain
$$
x_1^2+A_1 x_1+A_2 =0, \ B_1 x_1+B_2=0,\  C_1 x_1+C_2=0.
$$
Here we may substitute  $x_1=-B_2/B_1$ (or $x_1=-C_2/C_1$.) From the former one obtains two equations
$$
B_1C_2-C_1B_2=0, \ B_2^2-A_1B_1B_2+A_2B_1^2=0 \text{  in variables  } x_2,\,x_3.
$$
Compute the resultant of this system with respect to the variable $x_2$. This is a polynomial of degree 12 in the variable $x_3$, of the form
$$
R(x_3)=x_3^5(x_3-1)b_{12}^2(b_{11}b_{13}x_3-b_{12}x_3-b_{11})^2\widetilde{T}_4(x_3)
$$
with $\widetilde{T}_4$ a polynomial of degree 4 in the variable $x_3.$ By construction, the third entries of the idempotents are among the zeros of $R(x_3)$. Here $e_1,\,e_2$ and $e_3$ correspond to a simple zero $1$ and a double zero $0$, and one verifies that the linear factor $b_{11}b_{13}x_3-b_{12}x_3-b_{11}$ does not correspond to the third entry of any $v_i$. So, the third entries of $v_1,\,v_2$ and $v_3$ are roots of $\widetilde T$. One may compute the fourth root of this degree four polynomial as follows: First normalize (divide by the leading coefficient) to obtain the monic polynomial $T_4$. For $T_4$, the sum of the four roots is the negative coefficient of  $x_3^3$. Since we know the three roots $\gamma_{13}, \gamma_{23}, \gamma_{33}$ of $T_4$ we are able to calculate the fourth one, which is rational in the $\gamma_{ij}$, after re-substitution. We take this as a candidate for $s_3$.

Similarly we may consider the resultant with respect to the variable $x_3$, which is a polynomial of degree 12 in the variable $x_2$ and takes the form
$$
R_(x_2)=x_2^5(x_2-1)b_{12}(b_{11}b_{13}x_2-b_{12}x_2+b_{12})^2\widetilde{S}_4(x_2),
$$
and proceed as above, finding a candidate for $s_2$. Finally use $x_1=-B_2/B_1$ to obtain a candidate for $s_1$, and verify that this indeed yields an idempotent by direct calculation; see {\sc Maple} worksheet {\tt 7thidempotent}. It should be noted that the output is generally quite voluminous.

\subsubsection{On the proofs of Propositions \ref{genprop1} and \ref{genprop2}}
It is sufficient to prove the latter, which is done by inspection of the eigenvalues of the Jacobians of all idempotents; see {\sc Maple} worksheet {\tt TestPropertyE}.
\subsubsection{Details for Example \ref{algex}}
Here we provide some details for the homogeneous quadratic vector field given in \eqref{ex1}, with parameters
$$
  \gamma_{{11}}=\sqrt {2},\ \gamma_{{12}}=\sqrt {3},\ \gamma_{{13}}
=0,\ \gamma_{{21}}=0,\ \gamma_{{22}}=\sqrt {3},\ \gamma_{{23}}=\sqrt {5},
\ \gamma_{{31}}=\sqrt {2},\ \gamma_{{32}}=0,\ \gamma_{{33}}=\sqrt {5}.
$$
In this case the output of the computations is of moderate size, so we can reproduce it here; see {\sc Maple} worksheet {\tt Algebraicallydependent} for the calculations.

The system has the seven idempotents $e_1,e_2,e_3, v_1=(\sqrt {2},\sqrt {3},0), v_2=(0,\sqrt {3},\sqrt {5}), v_3=(\sqrt {2},0,\sqrt {5})$ and
$v=(s_1, s_2,s_3)$, with
$$
\begin{array}{cl}
s_1&=-{\dfrac {  A_1A_2 }{A_3A_4  }},\\

s_2&={\dfrac { \left(  \left( -243\,\sqrt {5}+480 \right) \sqrt {3}+369\,
\sqrt {5}-900 \right) \sqrt {2}+ \left( 282\,\sqrt {5}-720 \right)
\sqrt {3}-504\,\sqrt {5}+1020}{ \left(  \left( 440\,\sqrt {5}-1000
 \right) \sqrt {3}-801\,\sqrt {5}+1785 \right) \sqrt {2}+ \left( -673
\,\sqrt {5}+1495 \right) \sqrt {3}+1191\,\sqrt {5}-2685}},\\

s_3&={\dfrac { \left(  \left( -345\,\sqrt {5}+695 \right) \sqrt {3}+495\,
\sqrt {5}-1245 \right) \sqrt {2}+ \left( 380\,\sqrt {5}-950 \right)
\sqrt {3}-780\,\sqrt {5}+1530}{ \left(  \left( 440\,\sqrt {5}-1000
 \right) \sqrt {3}-801\,\sqrt {5}+1785 \right) \sqrt {2}+ \left( -673
\,\sqrt {5}+1495 \right) \sqrt {3}+1191\,\sqrt {5}-2685}},
\end{array}
$$
and the abbreviations
$$
\begin{array}{cl}
A_1&= \left( 362\,\sqrt {2}\sqrt {3}\sqrt {5}-573\,\sqrt {2}\sqrt
{5}-545\,\sqrt {3}\sqrt {5}-780\,\sqrt {2}\sqrt {3}+969\,\sqrt {5}+
1335\,\sqrt {2}+1265\,\sqrt {3}-2085 \right),\\

A_2&=\left( 69\,\sqrt {2}
\sqrt {3}\sqrt {5}-99\,\sqrt {2}\sqrt {5}-76\,\sqrt {3}\sqrt {5}-139\,
\sqrt {2}\sqrt {3}+156\,\sqrt {5}+249\,\sqrt {2}+190\,\sqrt {3}-306
 \right),\\

A_3&= \left( 133\,\sqrt {2}\sqrt {3}\sqrt {5}-231\,\sqrt {2}
\sqrt {5}-208\,\sqrt {3}\sqrt {5}-285\,\sqrt {2}\sqrt {3}+348\,\sqrt {
5}+543\,\sqrt {2}+484\,\sqrt {3}-744 \right),\\

A_4&= \left( 440\,\sqrt {2}
\sqrt {3}\sqrt {5}-801\,\sqrt {2}\sqrt {5}-673\,\sqrt {3}\sqrt {5}-
1000\,\sqrt {2}\sqrt {3}+1191\,\sqrt {5}+1785\,\sqrt {2}+1495\,\sqrt {
3}-2685 \right).
\end{array}
$$

The eigenvalues of the Jacobian matrix at the idempotents $e_1,e_2$ and $e_3$ are respectively
$$
\begin{array}{c}
\left( \begin {array}{ccc} -\left( \sqrt {5}-1 \right) \sqrt{2}/2, \
 -\sqrt {2} \left( \sqrt {3}-1 \right)/2, \
2\end {array} \right),\\

\left( \begin {array}{ccc} -\, \left( \sqrt {5}-1 \right) \sqrt {3}/3,
\ -\,\sqrt {3} \left( \sqrt {2}-1 \right)/3,
\ 2\end {array} \right),\\

\left( \begin {array}{ccc} -\,\sqrt {5} \left( \sqrt {3}-1 \right)/5,\
-\,\sqrt {5} \left( \sqrt {2}-1 \right)/5, \
2\end {array} \right).
\end{array}
$$

The eigenvalues of the Jacobian matrix at the idempotents $v_1,v_2$ and $v_3$ are respectively
$$
\begin{array}{ccc}
 \left( \begin {array}{ccc} 2, \ \sqrt {2}+\sqrt {3}, \ -2\,\sqrt {5}+2\end {array} \right), &

 \left( \begin {array}{ccc} 2,\  -2\,\sqrt {2}+2,
\ \sqrt {5}+\sqrt {3}\end {array} \right), &

 \left( \begin {array}{ccc} 2,\ -2\,\sqrt {3}+2,
\ \sqrt {5}+\sqrt {2}\end {array} \right).
\end{array}
$$

The Jacobian matrix at the last idempotent $v=(s_1,s_2,s_3)$ has $2$ as an eigenvalue and the other two are
$$
\begin{array}{rcl}
\lambda_{\pm}&=&{\dfrac { \left(
 \left( 794929\,\sqrt {2}+762999 \right) \sqrt {3}+880620\,\sqrt {2}+
1744545 \right) \sqrt {5}}{4061514}}
+{\dfrac{ \left( 1796931\,\sqrt {2
}+3382333 \right) \sqrt {3}}{4061514}}\\
& &\pm\dfrac{\sqrt A}{4061514}
+{\dfrac {776393\,
\sqrt {2}}{676919}}+{\dfrac{3211015}{1353838}},
\end{array}
$$
with
$$
\begin{array}{l}
A= \left(  \left(
-6061791842292\,\sqrt {5}+20403579754296 \right) \sqrt {3}-
10627847112816\,\sqrt {5}+45521293739166 \right) \sqrt {2}\\
+ \left( -8804787537402\,\sqrt {5}+29950278886104 \right) \sqrt {3}-
15500011528278\,\sqrt {5}+66711726928548.
\end{array}
$$
In each case, inspection shows that the eigenvalues are linearly independent over the rational number field $\mathbb Q$.

\bigskip
\noindent{\bf Acknowledgements.} {NK acknowledges support by the DFG Research Training Group GRK 1632 ``Experimental and Constructive Algebra''.  JL is partially supported by the Ag\`encia de Gesti\'o
d'Ajuts Universitaris i de Recerca grant 2017SGR1617, and the H2020
European Research Council grant MSCA-RISE-2017-777911. JL and CP are also supported by the Ministerio de Ciencia, Innovaci\'on y
Universidades, Agencia Estatal de Investigaci\'on grant MTM2016-77278-P
(FEDER). CP is additionally partially supported by the Catalan Grant 2017SGR1049 and the Spanish MINECO-FEDER Grants MTM2015-65715-P and PGC2018-098676-B-I00/AEI/FEDER/UE.
Finally, CP and SW thank the CRM at Universitat Aut\'onoma de Barcelona for its hospitality during a research visit in May and June 2019.}

\end{document}